\definecolor{linkred}{rgb}{0.7,0.2,0.2}
\definecolor{linkblue}{rgb}{0,0.2,0.6}
\numberwithin{figure}{section}
\DeclareFontFamily{OMS}{rsfs}{\skewchar\font'60}
\DeclareFontShape{OMS}{rsfs}{m}{n}{<-5>rsfs5 <5-7>rsfs7 <7->rsfs10 }{}
\DeclareSymbolFont{rsfs}{OMS}{rsfs}{m}{n}
\DeclareSymbolFontAlphabet{\scr}{rsfs}
\DeclareSymbolFontAlphabet{\scr}{rsfs}
\DeclareFontFamily{U}{mathx}{\hyphenchar\font45}
\DeclareFontShape{U}{mathx}{m}{n}{
      <5> <6> <7> <8> <9> <10>
      <10.95> <12> <14.4> <17.28> <20.74> <24.88>
      mathx10
      }{}
\DeclareSymbolFont{mathx}{U}{mathx}{m}{n}
\DeclareMathAccent{\wcheck}{0}{mathx}{"71}
\DeclareMathOperator{\Aut}{Aut}
\DeclareMathOperator{\codim}{codim}
\DeclareMathOperator{\Id}{Id}
\DeclareMathOperator{\img}{img}
\DeclareMathOperator{\Pic}{Pic}
\DeclareMathOperator{\rank}{rank}
\DeclareMathOperator{\reg}{reg}
\DeclareMathOperator{\sEnd}{\sE\negthinspace \mathit{nd}}
\DeclareMathOperator{\sing}{sing}
\DeclareMathOperator{\Sym}{Sym}
\DeclareMathOperator{\supp}{supp}
\DeclareMathOperator{\tor}{tor}
\newcommand{\sA}{\scr{A}}
\newcommand{\sB}{\scr{B}}
\newcommand{\sC}{\scr{C}}
\newcommand{\sD}{\scr{D}}
\newcommand{\sE}{\scr{E}}
\newcommand{\sF}{\scr{F}}
\newcommand{\sG}{\scr{G}}
\newcommand{\sH}{\scr{H}}
\newcommand{\sJ}{\scr{J}}
\newcommand{\sL}{\scr{L}}
\newcommand{\sO}{\scr{O}}
\newcommand{\sQ}{\scr{Q}}
\newcommand{\sS}{\scr{S}}
\newcommand{\sT}{\scr{T}}
\newcommand{\sU}{\scr{U}}
\newcommand{\sW}{\scr{W}}
\newcommand{\bA}{\mathbb{A}}
\newcommand{\bB}{\mathbb{B}}
\newcommand{\bC}{\mathbb{C}}
\newcommand{\bD}{\mathbb{D}}
\newcommand{\bE}{\mathbb{E}}
\newcommand{\bF}{\mathbb{F}}
\newcommand{\bG}{\mathbb{G}}
\newcommand{\bN}{\mathbb{N}}
\newcommand{\bP}{\mathbb{P}}
\newcommand{\bQ}{\mathbb{Q}}
\newcommand{\bR}{\mathbb{R}}
\newcommand{\bX}{\mathbb{X}}
\newcommand{\bZ}{\mathbb{Z}}
\theoremstyle{plain}
\newtheorem{thm}{Theorem}[section]
\newtheorem{defn}[thm]{Definition}
\newtheorem{fact}[thm]{Fact}
\newtheorem{lem}[thm]{Lemma}
\newtheorem{prop}[thm]{Proposition}
\theoremstyle{remark}
\newtheorem{asswlog}[thm]{Assumption w.l.o.g.}
\newtheorem{claim}[thm]{Claim}
\newtheorem{c-n-d}[thm]{Claim and Definition}
\newtheorem{consequence}[thm]{Consequence}
\newtheorem{construction}[thm]{Construction}
\newtheorem{notation}[thm]{Notation}
\newtheorem{rem}[thm]{Remark}
\newtheorem*{rem-nonumber}{Remark}
\numberwithin{equation}{thm}
\setlist[enumerate]{label=(\thethm.\arabic*), before={\setcounter{enumi}{\value{equation}}}, after={\setcounter{equation}{\value{enumi}}}}
\newcommand{\into}{\hookrightarrow}
\newcommand{\wtilde}{\widetilde}
\newcommand{\what}{\widehat}
\newcommand{\factor}[2]{\left. \raise 2pt\hbox{$#1$} \right/\hskip -2pt\raise -2pt\hbox{$#2$}}
\author{Daniel Greb} %
\address{Daniel Greb, Essener Seminar für Algebraische Geometrie und Arithmetik, Fakultät für Mathe\-ma\-tik, Universität Duisburg--Essen, 45117 Essen, Germany}
\email{\href{mailto:daniel.greb@uni-due.de}{daniel.greb@uni-due.de}}
\urladdr{\href{https://www.esaga.uni-due.de/daniel.greb/}{https://www.esaga.uni-due.de/daniel.greb}}
\newcommand{\Publication}[1]{}
\newcommand{\subversionInfo}{}
\newcommand{\svnid}[1]{}
\newcommand{\approvals}[2][Approval]{}
\DeclareMathAccent{\wcheck}{0}{mathx}{"71}
\author{Stefan Kebekus} %
\address{Stefan Kebekus, Mathematisches Institut, Albert-Ludwigs-Universität Freiburg, Ernst-Zermelo-Straße 1, 79104 Freiburg im Breisgau, Germany \&
  Freiburg Institute for Advanced Studies (FRIAS), Freiburg im Breisgau, Germany}
\email{\href{mailto:stefan.kebekus@math.uni-freiburg.de}{stefan.kebekus@math.uni-freiburg.de}}
\urladdr{\href{https://cplx.vm.uni-freiburg.de}{https://cplx.vm.uni-freiburg.de}}
\author{Thomas Peternell} %
\address{Thomas Peternell, Mathematisches Institut, Universität
  Bayreuth, 95440~Bayreuth, Germany}
\email{\href{mailto:thomas.peternell@uni-bayreuth.de}{thomas.peternell@uni-bayreuth.de}}
\urladdr{\href{https://www.komplexe-analysis.uni-bayreuth.de}{https://www.komplexe-analysis.uni-bayreuth.de}}
\keywords{Bogomolov-Gieseker inequality, Abelian variety, KLT Singularities, Miyaoka-Yau inequality, stability, projective flatness, uniformisation}
\subjclass[2020]{32Q30, 32Q26, 14E20, 14E30, 53B10}
\thanks{Daniel Greb is partially supported by the DFG-Research Training Group
  ``Symmetries and classifying spaces: analytic, arithmetic and derived''.
  Stefan Kebekus gratefully acknowledges partial support through a fellowship of
  the Freiburg Institute of Advanced Studies (FRIAS).  Thomas Peternell is
  partially supported by a DFG grant ``Zur Positivität in der Komplexen
  Geometrie'' and gratefully also acknowledges the support by FRIAS}
\title{Projectively flat KLT varieties}
\date{\today}
\DeclareMathOperator{\alb}{alb}
\DeclareMathOperator{\Alb}{Alb}
\DeclareMathOperator{\Def}{Def}
\DeclareMathOperator{\diff}{d}
\DeclareMathOperator{\Div}{Div}
\DeclareMathOperator{\Gal}{Gal}
\DeclareMathOperator{\GL}{GL}
\DeclareMathOperator{\iit}{iit}
\DeclareMathOperator{\Iit}{Iit}
\DeclareMathOperator{\Lie}{Lie}
\DeclareMathOperator{\loc}{loc}
\DeclareMathOperator{\PGL}{ℙ\negthinspace\GL}
\DeclareMathOperator{\Rad}{Rad}
\DeclareMathOperator{\sh}{sha}
\DeclareMathOperator{\Sh}{Sha}
\DeclareMathOperator{\SL}{SL}
\theoremstyle{remark}
\begin{document}

\begin{abstract}
In the context of uniformisation problems, we study projective varieties with
klt singularities whose cotangent sheaf admits a projectively flat structure
over the smooth locus.  Generalising work of Jahnke-Radloff, we show that torus
quotients are the only klt varieties with semistable cotangent sheaf and
extremal Chern classes.  An analogous result for varieties with nef normalised
cotangent sheaves follows.
\end{abstract}
\approvals[Approval for abstract]{Daniel & yes\\Stefan & yes\\ Thomas & yes}

\maketitle
\tableofcontents

%
% Do not edit the following line.  The text is automatically updated by
% subversion.
%
\svnid{$Id: 01-intro.tex 631 2020-09-23 09:23:01Z kebekus $}

\section{Introduction}
\subversionInfo

\subsection{Projective manifolds with projectively flat cotangent bundle}
\approvals{Daniel & yes \\Stefan & yes\\ Thomas & yes}

Let $ℰ$ be a locally free sheaf of rank $r$ on a complex, projective manifold
$X$ of dimension $n$.  If $ℰ$ is semistable with respect to some ample divisor
$H ∈ \Div(X)$, then the Bogomolov-Gieseker Inequality holds:
\begin{equation}\label{eq:x1}
  \frac{r-1}{2r} · c_1(ℰ)² · [H]^{n-2} ≤ c_2(ℰ) · [H]^{n-2}.
\end{equation}
In case of equality, $ℰ$ is projectively flat.  Motivated by the structure
theory of higher-dimensional projective manifolds, we are particularly
interested in the case where $ℰ$ is the cotangent bundle $Ω_X¹$ of an
$n$-dimensional manifold $X$.  In this setup, the equality case of \eqref{eq:x1}
reads
\begin{equation}\label{BGE}
  \frac{n-1}{2n} · c_1(X)² · [H]^{n-2} = c_2(X) · [H]^{n-2}.
\end{equation}

While semistability of $Ω¹_X$ occurs in many relevant cases and has important
geometric consequences, the Equality~\eqref{BGE} poses severe restrictions on
the geometry of $X$.
\begin{itemize}
\item In case $K_X$ is ample, equality never happens, owing to the stronger
  Miyaoka--Yau inequality.

\item If $K_X \equiv 0$, then by Yau's theorem, $X$ is an étale quotient of an
  Abelian variety.

\item If $X$ is Fano and Kähler-Einstein, again the equality \eqref{BGE} cannot
  occur, owing to the Chen--Oguie inequality.
\end{itemize}
In their remarkable paper \cite{MR3030068}, Jahnke and Radloff proved the
following complete characterisation of manifolds with semistable cotangent
bundle for which Equality~\eqref{BGE} holds.

\begin{thm}[\protect{Characterisation of torus quotients, \cite[Thms.~0.1 and 1.1]{MR3030068}}]
  Let $X$ be a projective manifold of dimension $n$ and assume that $Ω¹_X$ is
  $H$-semistable for some ample line bundle $H$.  If Equality~\eqref{BGE} holds,
  then $X$ is a finite étale quotient of a torus.  \qed
\end{thm}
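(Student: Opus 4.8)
The plan is to invoke the quoted equality case of Bogomolov--Gieseker and then to resolve the resulting structure problem for projectively flat cotangent bundles using the minimal model program, the vanishing of the second Chern class on Ricci-flat manifolds, and Wang's characterisation of tori by trivial tangent bundles. Since $\Omega^1_X$ is $H$-semistable and \eqref{BGE} holds, $\Omega^1_X$ is projectively flat, so $\bP(\Omega^1_X)\to X$ is a holomorphically flat $\bP^{n-1}$-bundle, classified by a representation $\varrho\colon\pi_1(X)\to\PGL_n(\bC)$. Two consequences will be used: $\sEnd(\Omega^1_X)$ is a flat bundle, whence the cohomological identity $2n\,c_2(X)=(n-1)\,c_1(X)^2$ holds in $H^4(X,\bR)$; and $\Omega^1_X$ is $H'$-polystable, carrying a Hermitian--Einstein metric, for every ample $H'$. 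One assumes $n\ge 2$, the case $n=1$ being degenerate.

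\emph{Step 1 (non-uniruledness; the case $K_X\equiv0$).} If $X$ were uniruled, choose a minimal dominating family of rational curves with general member $C$ and normalisation $f\colon\bP^1\to X$. Then $f^*\Omega^1_X$ is projectively flat on the simply connected $\bP^1$, hence $f^*\Omega^1_X\cong\mathcal O_{\bP^1}(a)^{\oplus n}$; the non-zero map $f^*\Omega^1_X\to\Omega^1_{\bP^1}=\mathcal O_{\bP^1}(-2)$ has image a line bundle $\mathcal O_{\bP^1}(c)$ with $a\le c\le -2$, forcing $a\le -2$ and $-K_X\cdot C=-na\ge 2n>n+1$, impossible for a minimal rational curve by Mori's bound. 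Hence $K_X$ is pseudo-effective, and since a non-zero pseudo-effective class has strictly positive intersection with $H^{n-1}$, either $K_X\cdot H^{n-1}>0$ or $K_X\equiv0$. In the latter case $c_1(X)=0$ in $H^2$, so $c_2(X)=0$ in $H^4$ by the identity above, and Yau's theorem --- a Ricci-flat metric exists, and vanishing of $c_1$ and $c_2$ forces it to be flat --- exhibits $X$ as a finite étale quotient of a torus.

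\emph{Step 2 (reduction to a splitting cover --- the main obstacle).} Now assume $K_X\cdot H^{n-1}>0$. The crux is to prove that $\varrho$ has finite image; granting this, a finite étale cover $\pi\colon X'\to X$ trivialises $\bP(\Omega^1_X)$, so $\Omega^1_{X'}\cong L^{\oplus n}$ for a line bundle $L$ with $L^{\otimes n}\cong K_{X'}$, and $L=\tfrac1n K_{X'}$ is pseudo-effective (as $X'$ remains non-uniruled) with $L\cdot(\pi^*H)^{n-1}>0$. Finiteness of $\varrho$ is precisely where non-uniruledness has to enter, and I expect it to be the genuinely hard point: the natural route is through nonabelian Hodge theory --- $\varrho$ is reductive and $\sEnd(\Omega^1_X)$ underlies a polarisable complex variation of Hodge structure --- together with a rigidity statement excluding non-finite such variations under the present hypotheses, forcing the monodromy to be virtually trivial.

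\emph{Step 3 ($\kappa(X')=0$, hence $X$ is a torus quotient).} On $X'$ one still has $c_1(X')^2=\tfrac{2n}{n-1}c_2(X')$. If $X'$ were of general type, pass to its minimal model and follow the Chern classes through the program: the identity contradicts the Miyaoka--Yau inequality $c_1^2\le\tfrac{2(n+1)}{n}c_2$, since $\tfrac{2n}{n-1}>\tfrac{2(n+1)}{n}$. If $0<\kappa(X')<n$, the general fibre $F$ of the Iitaka fibration has $\Omega^1_{X'}|_F$ trivial because $L|_F\equiv0$; then $\Omega^1_F$ is globally generated with $K_F\equiv0$, hence trivial, so $F$ is an abelian variety --- making the Iitaka fibration (up to modification) a family of abelian varieties whose Hodge bundle would be as positive as a big pullback from the base, against Arakelov-type inequalities. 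Therefore $\kappa(X')=0$, and then $K_{X'}\equiv0$ (the semistability together with $\kappa=0$ force the numerical dimension of $K_{X'}$ to vanish). Now $c_2(X')=0$ as well, so after one further finite étale cover $\Omega^1$ becomes trivial, and by Wang's theorem that cover is a complex torus; hence $X$ is a finite étale quotient of a torus. $\square$
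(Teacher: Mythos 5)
Your Steps 0 and 1 are fine, and your endgame in the case $K_X\equiv 0$ (the equality forces $c_2(X)\cdot H^{n-2}=0$, then Yau plus Bieberbach) is exactly the classical part of the argument. The genuine gap is Step 2, and it is not a small one: you reduce everything to the claim that the projective monodromy $\varrho\colon\pi_1(X)\to\PGL(n,\bC)$ has finite image, and then only gesture at a proof (``$\varrho$ is reductive and $\sEnd(\Omega^1_X)$ underlies a polarisable VHS, plus a rigidity statement''). Neither ingredient is available: there is no reason for $\varrho$ to be rigid or reductive --- the troublesome case is precisely a representation whose Zariski closure has a positive-dimensional solvable part, and for non-rigid representations nonabelian Hodge theory does not equip $\sEnd(\Omega^1_X)$ with a variation of Hodge structure, nor is there a rigidity theorem excluding infinite monodromy under the present hypotheses. (Your parenthetical claim that $\Omega^1_X$ is polystable and Hermitian--Einstein for every polarisation also does not follow from semistability plus the discriminant equality; what one gets is projective flatness, cf.\ \cite[Thm.~1.6]{GKP20a}.) In effect you have assumed the hardest part of the theorem: finiteness of the image is essentially equivalent to the conclusion, and proving it is where all the work lies.

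What the argument of Jahnke--Radloff \cite{MR3030068}, and this paper's singular generalisation, actually do is different: finiteness of $\varrho$ is never proved directly. After establishing that $K_X$ is nef, one takes the Zariski closure $G_X$ of the image, splits off its solvable radical, and studies the Shafarevich map associated with the kernel of $\pi_1(X)\to \factor{G_X}{\Rad(G_X)}$ (see \cite[Sect.~3]{MR1341589}); Zuo's theorem \cite{MR1384908} makes the Shafarevich base of general type, the fibres are analysed through the solvable-representation structure (Proposition~\ref{prop:pfbisr}), their Albanese maps and abundance, and one concludes with Kollár's characterisation of étale quotients of Abelian group schemes and the final step of \cite[Thm.~6.1]{MR3030068} --- which is also where the positivity of Hodge bundles you allude to as ``Arakelov-type inequalities'' genuinely enters, as a worked-out contradiction rather than a slogan. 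Your Step 3 shows the same kind of gap: ``follow the Chern classes through the program'' is not legitimate (Chern class identities do not descend along an MMP; the paper avoids this by proving $K_X$ nef first and then applying the singular Miyaoka--Yau inequality of \cite{GKPT19b}, see Claim~\ref{claim:ngen}), and in the split situation $\Omega^1_{X'}\cong L^{\oplus n}$ with $L$ pseudo-effective the clean route is the curvature-current argument of Theorem~\ref{split}, which yields $K_{X'}\sim_{\bQ}0$ directly, with no case distinction on the Kodaira dimension.
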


In particular, this deals with manifolds of intermediate Kodaira dimension, and
moreover implies that in the Fano case Equality~\eqref{BGE} can never happen,
independent of the existence question for Kähler-Einstein metrics.

\subsection{Main result of this paper}
\approvals{Daniel & yes \\Stefan & yes\\ Thomas & yes}

We have learned from numerous previous results, including \cite{GKP13, LT18,
  GKT16, GKPT19, GKPT19b, GKP20a}, that the natural context for uniformisation
results is that of minimal model theory.  The aim of this paper is therefore to
generalise the theorem of Jahnke--Radloff to the case when $X$ has klt
singularities.

\begin{thm}[Characterisation of quasi-Abelian varieties]\label{thm:charQAbelian}
  Let $X$ be a projective klt space of dimension $n ≥ 2$ and let $H ∈ \Div(X)$
  be ample.  Assume that $Ω^{[1]}_X$ is semistable with respect to $H$ and that
  its $ℚ$-Chern classes satisfy the equation
  \begin{equation}\label{eq:qcce}
    \frac{n-1}{2n}·\what{c}_1 \left( Ω^{[1]}_X \right)² · [H]^{n-2} %
    = \what{c}_2 \left( Ω^{[1]}_X \right) · [H]^{n-2}.
  \end{equation}
  Then, $X$ is quasi-Abelian and has at worst quotient singularities.
\end{thm}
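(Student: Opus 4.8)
It suffices to produce a finite quasi-étale cover $A \to X$ with $A$ an Abelian variety: the statement about singularities is then automatic, since every finite quotient of a smooth variety has quotient singularities. The plan is to upgrade the numerical Equality~\eqref{eq:qcce} to a genuine projectively flat structure on the cotangent sheaf over $X_{\reg}$, to use this structure to force $K_X \equiv 0$, and finally to invoke the structure theory of klt spaces with numerically trivial canonical class.

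\textbf{Step 1: Projective flatness.}
Since $\widehat{c}_2\bigl(\sEnd(\Omega^{[1]}_X)\bigr) = 2n\,\widehat{c}_2(\Omega^{[1]}_X) - (n-1)\,\widehat{c}_1(\Omega^{[1]}_X)^2$, Equation~\eqref{eq:qcce} says exactly that the $H$-discriminant of $\Omega^{[1]}_X$ vanishes. Together with $H$-semistability, the singular Kobayashi--Hitchin correspondence and the Bando--Siu extension theory for reflexive sheaves on klt spaces (as developed in \cite{GKPT19, GKP20a}) then show that $\Omega^{[1]}_X|_{X_{\reg}}$ is projectively flat: it carries a projectively flat Hermitian metric; equivalently $\sEnd\bigl(\Omega^{[1]}_X|_{X_{\reg}}\bigr)$ is a flat Hermitian bundle, given by a representation $\varrho\colon \pi_1(X_{\reg}) \to \mathrm{PGL}_n(\bC)$ with image in $\mathrm{PU}(n)$. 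Two consequences will be used repeatedly. First, the discriminant vanishes as a full $\bQ$-cohomology class, so $\widehat{c}_1(\Omega^{[1]}_X)^2 = \tfrac{2n}{n-1}\,\widehat{c}_2(\Omega^{[1]}_X)$ after intersecting with \emph{any} classes. Second, $\Omega^{[1]}_X$ is semistable with respect to \emph{every} ample polarisation, since on a general complete-intersection curve a projectively unitary-flat bundle is polystable and one concludes by Mehta--Ramanathan. Using \cite{GKP13} I would finally replace $X$ by a maximally quasi-étale cover, so that $\varrho$ factors through $\pi_1(X)$; this changes nothing, because quasi-étale covers pull $\Omega^{[1]}$ back to $\Omega^{[1]}$, preserve semistability, and multiply $\bQ$-Chern numbers by the degree.

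\textbf{Step 2: $K_X\equiv 0$.}
First, $X$ is not uniruled. Otherwise pick a general member $C$ of a minimal dominating family of rational curves; then $C \cong \bP^1$ lies in $X_{\reg}$, is free, and satisfies $-K_X\cdot C \le n+1$ by bend-and-break. Projective flatness restricts to $C$, so $\sEnd(\Omega^1_X|_C)$ is flat on $\bP^1$, hence trivial, hence $\Omega^1_X|_C \cong \mathcal{O}_{\bP^1}(-a)^{\oplus n}$ with $na = -K_X\cdot C$; but the conormal sequence exhibits a surjection $\mathcal{O}_{\bP^1}(-a)^{\oplus n} \twoheadrightarrow \Omega^1_C = \mathcal{O}_{\bP^1}(-2)$, which forces $a \ge 2$ and hence $-K_X\cdot C \ge 2n > n+1$, a contradiction. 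Thus $K_X$ is pseudoeffective. Running a $K_X$-MMP --- all of whose steps are isomorphisms in codimension one, so that $H$-semistability, the $\bQ$-Chern identity, and projective flatness on the smooth locus all persist --- we may assume $K_X$ nef. If $K_X$ were big, the Miyaoka--Yau inequality for klt varieties of general type \cite{GKPT19} would give $\widehat{c}_1(\Omega^{[1]}_X)^2\cdot[K_X]^{n-2} \le \tfrac{2(n+1)}{n}\,\widehat{c}_2(\Omega^{[1]}_X)\cdot[K_X]^{n-2}$, which, combined with $\widehat{c}_1^2 = \tfrac{2n}{n-1}\widehat{c}_2$, yields $K_X^n \le \tfrac{n^2-1}{n^2}K_X^n$, hence $K_X^n \le 0$, contradicting bigness. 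So $\nu(K_X) < n$. It remains to exclude $0 < \nu(K_X) < n$. In that range the general fibre $F$ of the Iitaka fibration $X \dashrightarrow Y$ has $\dim F < n$, $K_F \equiv 0$, and projectively flat cotangent sheaf, hence is a torus quotient by induction on the dimension; the flat structure on $\Omega^{[1]}_X$ restricts to a flat structure along the fibres, forcing $X \dashrightarrow Y$ to become isotrivial after a finite base change, so that up to a quasi-étale cover $X \cong F \times Y$; but then $\Omega^{[1]}_X$ splits as a sum of the pullbacks of $\Omega^{[1]}_F$ and $\Omega^{[1]}_Y$, and is projectively flat only if $\Omega^{[1]}_Y$ is flat, contradicting that $Y$ is of general type. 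Therefore $\nu(K_X) = 0$, i.e.\ $K_X \equiv 0$, and by abundance in this case $K_X$ is torsion.

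\textbf{Step 3 and the main obstacle.}
With $K_X$ torsion, $\widehat{c}_1 \equiv 0$ upgrades ``projectively flat'' to ``flat'' for $\Omega^{[1]}_X|_{X_{\reg}}$ after a further finite quasi-étale cover trivialising $\omega_X$; the characterisation of torus quotients among klt spaces with flat tangent sheaf \cite{GKP13} --- equivalently, the Beauville--Bogomolov decomposition for klt spaces with $\widehat{c}_1 = 0$, in which projective flatness kills the Calabi--Yau and irreducible-holomorphic-symplectic factors because these have $\widehat{c}_2 \ne 0$ --- then shows that $X$ admits a finite quasi-étale cover by an Abelian variety, so $X$ is quasi-Abelian with quotient singularities. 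I expect the main obstacle to be Step~2, specifically the passage from ``$K_X$ nef, not of general type'' to ``$K_X \equiv 0$'': excluding intermediate numerical dimension requires building and analysing the canonical fibration --- and hence relies on the relevant portions of the minimal model program and of abundance (to run the MMP, to realise $\nu(K_X)$ as the Kodaira dimension, and to make $Y$ of general type) --- precisely enough to carry out the isotriviality argument and descend flatness to the base. A secondary difficulty is Step~1 itself: extracting a bona fide projectively flat \emph{Hermitian} structure, together with the $\pi_1(X)$-representation, from the purely numerical input~\eqref{eq:qcce} in the singular klt category.
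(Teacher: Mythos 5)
Your overall skeleton --- deducing projective flatness of $\Omega^{[1]}_X|_{X_{\reg}}$ from semistability plus vanishing of the discriminant, passing to a maximally quasi-étale cover, ruling out general type via the klt Miyaoka--Yau inequality, and settling the case $K_X \equiv 0$ by Lu--Taji (equivalently, by the decomposition theorem with $\what{c}_2 = 0$) --- coincides with the paper's Steps 1 and 3 and with its easiest case. The genuine gap is the middle of your Step 2, the passage from ``$K_X$ nef and not big'' to ``$K_X \equiv 0$'', which you yourself flag as the main obstacle but do not actually bridge. Concretely: (i) a $K_X$-MMP on a non-uniruled klt variety is not automatically small --- divisorial contractions can and do occur --- and neither the polarisation $H$, nor the $\bQ$-Chern class equality~\eqref{eq:qcce}, nor projective flatness on the (changing) smooth locus is known to persist under such steps; the paper avoids the MMP entirely and proves nefness of $K_X$ directly, combining Takayama's local simple-connectedness of extremal contractions (Proposition~\ref{prop:takayama}) with the relative vanishing theorem (Theorem~\ref{thm:relVan}) in Claims~\ref{claim:min0} and \ref{claim:minimal}. (ii) Your exclusion of $0 < \nu(K_X) < n$ presupposes abundance for $X$ (to even have an Iitaka fibration computing $\nu$), asserts that its base is of general type (in general the canonical bundle formula only makes it of \emph{log} general type), and rests on an unproved isotriviality/product claim ``flat along the fibres $\Rightarrow X \cong F \times Y$ up to quasi-étale cover''; no argument is offered for any of these.

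Establishing exactly these points is the bulk of the paper's proof and cannot be bypassed by induction on dimension: Steps 4--6 construct the Shafarevich map for the solvable radical of the monodromy group, prove its base is of general type (Zuo), analyse the fibres via Proposition~\ref{prop:pfbisr}, their Albanese maps, Theorem~\ref{split} and the negativity lemmas, and only thereby obtain abundance for $X$ through Proposition~\ref{prop:abundance}; Steps 7--8 then invoke Kollár's characterisation of étale quotients of Abelian group schemes (Proposition~\ref{prop:K63}), level-three structures and the absence of rational curves to show that an étale cover of $X$ is an honest Abelian group scheme over a smooth base with ample canonical class, and the final contradiction with \eqref{eq:qcce} is Jahnke--Radloff's computation on that group scheme --- not a splitting $\Omega^{[1]}_X \cong \Omega^{[1]}_F \boxplus \Omega^{[1]}_Y$, which is never available. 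A further, smaller, issue: your non-uniruledness argument needs the minimal free curve to avoid $X_{\sing}$ (you have not yet shown singularities are isolated at that point of the argument), and the paper's substitute for it is Lemma~\ref{lem:6-5}, applied only after nefness of $K_X$ is known. So the proposal is incomplete precisely at the theorem's core difficulty.
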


In Theorem~\ref{thm:charQAbelian}, we say that a normal projective variety $X$
is \emph{quasi-Abelian} if there exists a quasi-étale cover $\wtilde{X} → X$
from an Abelian variety $\wtilde{X}$ to $X$.  The symbols $\what{c}_{•}$ denote
$ℚ$-Chern classes on the klt variety $X$, as recalled in
\cite[Sect.~3]{GKPT19b}.

\subsection{Normalised cotangent sheaves}
\approvals{Daniel & yes \\Stefan & yes\\ Thomas & yes}

Work of Narasimhan, Seshadri and others, summarised for example in
\cite[Thm.~1.1]{MR3030068} and explained in detail by Nakayama in
\cite[Thm.~A]{NakayamaNormalizedPreprint}, can be used to reformulate
Jahnke-Radloff's result in terms of positivity properties of natural tensor
sheaves: a projective manifold $X$ of dimension $n$ is quasi-Abelian if and only
if the \emph{normalised cotangent bundle}, $\Sym^n Ω¹_X ⊗ 𝒪_X(-K_X)$, is nef.

While the arguments presented by Nakayama use intersection theory computations
on the total space of the projectivised cotangent bundle that cannot immediately
carried over to singular varieties, we are nevertheless able obtain the
analogous result in our setup.

\begin{thm}\label{thm:6-1}
  Let $X$ be a normal projective variety of dimension $n ≥ $.  Assume that $X$
  is klt and that the reflexive normalised cotangent sheaf
  \[
    \bigl(\Sym^nΩ¹_X ⊗ 𝒪_X(-K_X)\bigr)^{**}
  \]
  is nef.  Then, $X$ is quasi-Abelian.
\end{thm}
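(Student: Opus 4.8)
The plan is to deduce Theorem~\ref{thm:6-1} from Theorem~\ref{thm:charQAbelian}: I shall show that nefness of the reflexive normalised cotangent sheaf
\[
  \sW := \bigl(\Sym^nΩ¹_X ⊗ \sO_X(-K_X)\bigr)^{**}
       = \bigl(\Sym^nΩ^{[1]}_X\bigr)^{**} ⊗ \sO_X(-K_X)
\]
forces $Ω^{[1]}_X$ to be $H$-semistable and forces Equality~\eqref{eq:qcce}, after which Theorem~\ref{thm:charQAbelian} does the rest — this is where $n ≥ 2$ enters. The starting point is a purely formal computation: $\Sym^nΩ^{[1]}_X$ has rank $N := \binom{2n-1}{n}$ and first $ℚ$-Chern class $\what{c}_1\bigl(\Sym^nΩ^{[1]}_X\bigr) = N·K_X$, so tensoring with $\sO_X(-K_X)$ yields $\what{c}_1(\sW) = 0$. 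Thus $\sW$ is a nef reflexive sheaf with vanishing first $ℚ$-Chern class, and the task is to turn this into (i) semistability of $Ω^{[1]}_X$ and (ii) the vanishing of a discriminant.

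For (i), I would argue by contradiction. Assume $Ω^{[1]}_X$ is not $H$-semistable and let $Q$ be the last quotient in its Harder--Narasimhan filtration, so that $μ_H(Q) < μ_H\bigl(Ω^{[1]}_X\bigr) = \tfrac1n K_X·[H]^{n-1}$. Applying the right-exact functor $\Sym^n$ to $Ω^{[1]}_X \onto Q$ and taking reflexive hulls produces a torsion-free quotient sheaf of $\bigl(\Sym^nΩ^{[1]}_X\bigr)^{**}$ of slope $n·μ_H(Q)$; tensoring with $\sO_X(-K_X)$, the sheaf $\sW$ acquires a torsion-free quotient of slope
\[
  n·μ_H(Q) - K_X·[H]^{n-1} < n·\tfrac1n K_X·[H]^{n-1} - K_X·[H]^{n-1} = 0 .
\]
This is impossible: a torsion-free quotient of the nef sheaf $\sW$ is again nef, hence its determinant is a nef $ℚ$-divisor class and its slope is non-negative. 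Therefore $Ω^{[1]}_X$ is $H$-semistable, and as $H$ was an arbitrary ample class, the same holds for every ample polarisation.

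For (ii), nefness of $\sW$ together with the Fulton--Lazarsfeld-type positivity of $ℚ$-Chern classes of nef reflexive sheaves on klt spaces (cf.\ \cite{GKPT19b} and the references therein) gives both $\what{c}_2(\sW)·[H]^{n-2} ≥ 0$ and, from non-negativity of the second Segre class, $\bigl(\what{c}_1(\sW)^2 - \what{c}_2(\sW)\bigr)·[H]^{n-2} ≥ 0$; since $\what{c}_1(\sW) = 0$, these two inequalities force $\what{c}_2(\sW)·[H]^{n-2} = 0$. On the smooth locus $X_{\reg}$, where $\sW$ and $Ω^{[1]}_X$ are honest vector bundles, a standard computation with Chern classes of symmetric powers — tensoring by a line bundle leaves the discriminant unchanged, and the discriminant of the $n$-th symmetric power of a rank-$n$ bundle is a fixed positive rational multiple of the discriminant of the bundle — produces a constant $γ(n) ∈ ℚ_{>0}$ with
\[
  \what{c}_2(\sW) = γ(n)·\Bigl(\what{c}_2\bigl(Ω^{[1]}_X\bigr) - \tfrac{n-1}{2n}·\what{c}_1\bigl(Ω^{[1]}_X\bigr)^2\Bigr),
\]
an identity that extends from $X_{\reg}$ to $X$ via the construction of $ℚ$-Chern classes in \cite[Sect.~3]{GKPT19b}. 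Feeding $\what{c}_2(\sW)·[H]^{n-2} = 0$ into this identity yields Equality~\eqref{eq:qcce}, and Theorem~\ref{thm:charQAbelian} then shows that $X$ is quasi-Abelian.

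The hard part will be the vanishing $\what{c}_2(\sW)·[H]^{n-2} = 0$ in step (ii): it presupposes a workable intersection theory of $ℚ$-Chern classes on the klt space $X$ together with the Fulton--Lazarsfeld inequalities for a merely reflexive nef sheaf, which on a resolution forces one to control the discrepancy between the pulled-back sheaf and its reflexive hull and to verify that the positivity survives. By contrast, step (i) is elementary slope bookkeeping, and the passage from $\what{c}_2(\sW)$ to the Bogomolov discriminant of $Ω^{[1]}_X$ is a routine, if somewhat unpleasant, Chern-class calculation.
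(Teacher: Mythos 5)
Your overall architecture --- establish $H$-semistability of $\Omega^{[1]}_X$ together with the Chern-class equality~\eqref{eq:qcce} and then invoke Theorem~\ref{thm:charQAbelian} --- is exactly how the paper finishes, and your step~(i) as well as the formal identity expressing $\what{c}_2(\sW)$ as a positive multiple of the Bogomolov discriminant of $\Omega^{[1]}_X$ are fine (the paper gets semistability even more quickly, from nefness and degree zero of $\sF|_C$ on a general complete-intersection curve). The genuine gap is the step you yourself flag as ``the hard part'': the inequalities $\what{c}_2(\sW)·[H]^{n-2} \ge 0$ and $\bigl(\what{c}_1(\sW)^2-\what{c}_2(\sW)\bigr)·[H]^{n-2} \ge 0$ are asserted for a merely \emph{reflexive} nef sheaf on the singular space $X$, and no such Fulton--Lazarsfeld statement is available off the shelf. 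Fulton--Lazarsfeld positivity concerns locally free nef sheaves; to apply it to the $\mathbb{Q}$-Chern classes of $\sW$ you would have to show that the orbifold/cover data computing $\what{c}_2(\sW)·[H]^{n-2}$ (reflexive pull-backs to covers of a general complete-intersection surface, or $\pi^*\sW$ modulo torsion and its reflexive hull on a resolution) remain nef and compute the same numbers --- and ``controlling the discrepancy'' at this point is precisely where all the substance of the theorem is concentrated, not a verification one can defer.

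The paper resolves exactly this difficulty by a different mechanism, which you would essentially have to reproduce: on a resolution $\pi : \wtilde{X} \to X$ it considers $\wtilde{\sF} := \pi^*\sF/\tor$, observes that $\det\wtilde{\sF} = \sO_{\wtilde{X}}\bigl(\sum a_i·E_i\bigr)$ with all $E_i$ exceptional because $\det\sF \cong \sO_X$, and combines nefness with the negativity lemma \cite[Lem.~2.1]{LWY20} to conclude $\det\wtilde{\sF} \cong \sO_{\wtilde{X}}$, so that $\wtilde{\sF}$ is numerically flat; the descent theorem \cite[Thm.~1.2]{GKPT17} then shows that $\sF$ itself is locally free and numerically flat, \cite[Prop.~3.7]{GKP20a} gives projective flatness of $\Omega^1_{X_{\reg}}$, and Proposition~\ref{prop:icq} (after passing to a maximally quasi-\'etale cover) shows that the singularities are isolated. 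Only after these steps does the Chern-class equality become an honest vector-bundle computation: for $n \ge 3$ one restricts to a general surface $S \subset X_{\reg}$ and quotes \cite[Prop.~3.1.b]{Kob87} for projectively flat bundles, while for $n = 2$ one computes directly with the numerically flat $\sF$. So your plan goes through only if you supply a proof of $\what{c}_2(\sW)·[H]^{n-2} = 0$ in the reflexive, singular setting, and the known route to that vanishing is the paper's Steps~2--3 (local freeness and numerical flatness via the negativity lemma and descent, plus the isolated-singularities statement that puts the relevant surface inside $X_{\reg}$), not a reflexive-sheaf version of Fulton--Lazarsfeld.
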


Definition~\vref{def:possheaf} recalls the meaning of \emph{nef} for a sheaf
that is not necessarily locally free.

\subsection{Strategy of proof}
\approvals{Daniel & yes \\Stefan & yes\\ Thomas & yes}

While the general strategy of proof is similar to the one employed by Jahnke and
Radloff, our argument introduces a number of new tools; this includes a detailed
analysis of sheaves that are projectively flat on the smooth a klt variety,
especially as their behaviour near the singularities is concerned.  On the one
hand, these tools allow to deal with the (serious) complications arising from
the singularities.  On the other hand, they enable us to streamline parts of
Jahnke-Radloff's proof, thereby clarifying the underlying geometric principles.

As an intermediate step, we obtain the following result, which might be of
independent interest to some of the readers.

\begin{thm}[\protect{= Theorem~\ref{split} in Section~\ref{sect:split}}]
  Let $X$ be a normal projective klt variety.  Assume that the sheaf $Ω^{[1]}_X$
  of reflexive differentials is of the form
  \begin{equation}
    Ω^{[1]}_X ≅ ℒ^{⊕ n}
  \end{equation}
  where $ℒ$ is a reflexive sheaf of rank one, so that in particular $ℒ$ is
  $ℚ$-Cartier.  If $ℒ$ is pseudo-effective, then $K_X \sim_ℚ 0$.
\end{thm}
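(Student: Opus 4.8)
The plan is to reduce the assertion to the single statement that $ℒ$ is numerically trivial. Indeed, taking reflexive determinants in $Ω^{[1]}_X ≅ ℒ^{⊕ n}$ gives $ω_X ≅ ℒ^{[n]}$, so $K_X \sim_ℚ n·ℒ$; if $ℒ ≡ 0$ then $K_X ≡ 0$, and a klt space with numerically trivial canonical class has $K_X \sim_ℚ 0$. Two consequences of the splitting will be used throughout: $Ω^{[p]}_X ≅ \bigl(ℒ^{[p]}\bigr)^{⊕ \binom np}$ for every $p$ (reflexive exterior powers), and $κ(X) = κ(ℒ) ≤ 1$, which follows from the reflexive Bogomolov–Sommese vanishing theorem on klt spaces applied to the saturated rank-one subsheaf $ℒ ⊆ Ω^{[1]}_X$. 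Also, $K_X$ is pseudo-effective, so $X$ is not uniruled.

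Now assume $ℒ \not≡ 0$, aiming for a contradiction, and proceed in three steps. \emph{(i)} $K_X$ is nef: otherwise $X$ carries a $K_X$-negative extremal rational curve $C$; normalising it by $η\colon ℙ^1 → C ⊆ X$ and pulling back differential forms gives a non-zero map $\bigl(η^*ℒ\bigr)^{⊕ n} ≅ η^*Ω^{[1]}_X → Ω^1_{ℙ^1}$, forcing $ℒ·C ≤ -2$, hence $-K_X·C = -n·(ℒ·C) ≥ 2n$ — in conflict with the bound $-K_X·C ≤ n+1$ on the length of an extremal ray. So $X$ is already a minimal model. \emph{(ii)} The numerical dimension of $ℒ$ equals one: the splitting makes the equality case $\what c_2\bigl(Ω^{[1]}_X\bigr)·[H]^{n-2} = \tfrac{n-1}{2n}·\what c_1\bigl(Ω^{[1]}_X\bigr)^{2}·[H]^{n-2}$ of the Bogomolov–Gieseker inequality hold for all ample $H$, so comparison with the $ℚ$-Miyaoka–Yau inequality for the minimal klt space $X$ forces $K_X^{2}·[H]^{n-2} ≤ 0$, hence $= 0$ by nefness; thus $ν(ℒ) = ν(K_X) ≤ 1$, and $ν(ℒ) = 0$ is impossible since then the nef class $K_X$ would already be numerically trivial. \emph{(iii)} Therefore $K_X$ is semiample, defining a fibration $f\colon X → C$ onto a curve with $K_X = f^*A$ for an ample $A$ on $C$. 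A general fibre $F$ has $ω_F ≅ 𝒪_F$, and inspecting $f^*Ω^1_C|_F \hookrightarrow Ω^{[1]}_X|_F ≅ \bigl(ℒ|_F\bigr)^{⊕ n}$ shows $ℒ|_F ≅ 𝒪_F$ and $Ω^{[1]}_F ≅ 𝒪_F^{⊕(n-1)}$, so $F$ is abelian up to a quasi-étale cover. The relative cotangent sequence then exhibits $Ω^{[1]}_X$ as an extension of $f^*\bigl(f_*Ω^{[1]}_{X/C}\bigr)$ by $f^*Ω^1_C$; pushing forward and using that $Ω^{[1]}_X ≅ ℒ^{⊕ n}$ with $ℒ$ a $ℚ$-linear pull-back from $C$, the Hodge-type bundle $f_*Ω^{[1]}_{X/C}$ of this family of abelian varieties turns out to be a quotient of a direct sum of copies of a line bundle of positive degree on $C$, hence far too positive — contradicting Fujita's semipositivity together with the Arakelov inequality for such families. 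So $ℒ ≡ 0$, completing the proof.

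The heart of the argument, and its main obstacle, is step \emph{(iii)}: the decomposition $Ω^{[1]}_X ≅ ℒ^{⊕ n}$ is incompatible with a non-trivial fibration of $X$, because it would split $Ω^{[1]}_X$ into a vertical part that is flat along the fibres and a horizontal part that is positive on the base, and these cannot both be copies of the single sheaf $ℒ$. Turning this into a proof requires a careful analysis of the fibration $f$, of its variation of Hodge structure, and — most delicately — of the behaviour of the projectively flat sheaf $Ω^{[1]}_X$ near the singularities of $X$ and over the special fibres of $f$; this is exactly the package of tools developed in the earlier parts of the paper. The same tools are needed to treat the remaining technical points, such as the behaviour along the non-nef locus of $K_X$ in step \emph{(i)} and the exclusion, in the singular setting, of a numerically non-trivial $ℒ$ of numerical dimension zero.
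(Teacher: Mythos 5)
Your route (nefness of $K_X$ via extremal rays, numerical dimension $\le 1$ via Miyaoka--Yau, abundance, then ruling out the resulting fibration by Hodge theory) is genuinely different from the paper's, but it contains gaps that are not merely technical. In step (i), the bound $-K_X\cdot C\le n+1$ is the bound for smooth varieties (and conjectural beyond that); for klt varieties the known length bound (Kawamata) is $-K_X\cdot C\le 2\dim X$, which is perfectly compatible with $\sL\cdot C=-2$, so no contradiction results. In step (ii), the $\mathbb{Q}$-Miyaoka--Yau inequality of \cite{GKPT19b} is available only when $K_X$ is big and nef and is stated against the polarisation $[K_X]^{n-2}$; comparing it with the Bogomolov--Gieseker equality therefore only excludes the general-type case, i.e.\ gives $[K_X]^n=0$, and does \emph{not} yield $[K_X]^2\cdot[H]^{n-2}=0$ for an arbitrary ample $H$. (That conclusion could be rescued by a Bogomolov--Sommese argument on a general complete-intersection surface, as the paper does for the corresponding bound in the proof of the main theorem, but you would have to make that argument.) Most seriously, in step (iii) the passage from $\nu(K_X)=1$, $\kappa(K_X)\le 1$ to ``$K_X$ is semiample'' is precisely an open case of abundance: Kawamata's theorem gives semiampleness only when $\kappa=\nu$, and nothing in your argument produces $\kappa(K_X)=1$ --- pseudo-effectivity of $\sL$ does not even give $\kappa\ge 0$. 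The concluding contradiction over the base curve (Fujita semipositivity versus an Arakelov-type bound) is likewise only a sketch, and it is exactly the part you defer to ``the tools of the paper'', which do not in fact address it.

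For comparison, the paper's proof never establishes nefness or abundance at this stage and works directly with pseudo-effectivity: by Proposition~\ref{prop:icq} the space $X$ has isolated quotient singularities, one passes to a Mumford cover and a log resolution, equips the pull-back of $\sL$ with a singular metric with plurisubharmonic local weights, and reads the $n$ inclusions $\sL\to\Omega^{[1]}_X$ as twisted holomorphic $1$-forms. Demailly's integrability theorem \cite{Dem02} then produces $n$ codimension-one foliations along whose leaves the curvature vanishes; since the leaves span the tangent space at a general point, the curvature current vanishes off the ramification divisor, and the support theorems for closed positive currents eliminate the remaining divisorial part. Hence $\sL\equiv 0$, so $K_X\equiv 0$, and $K_X\sim_{\mathbb{Q}}0$ follows from abundance in the numerically trivial case \cite{Nakayama04}. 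If you wish to pursue your route, the missing abundance step is the real obstruction --- note that in the paper Theorem~\ref{split} is itself an input to the abundance argument for the main theorem, so deriving it from abundance-type statements risks circularity.
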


\subsection{Thanks}
\approvals{Daniel & yes \\Stefan & yes\\ Thomas & yes}

We would like to thank Indranil Biswas and Stefan Schröer for answering our
questions.

%
% Do not edit the following line.  The text is automatically updated by
% subversion.
%
\svnid{$Id: 02-notation.tex 625 2020-09-23 07:51:32Z peternell $}

\section{Conventions, notation, and variations of standard facts}
\subversionInfo

\subsection{Global conventions}
\approvals{Daniel & yes \\Stefan & yes\\ Thomas & yes}

Throughout this paper, all schemes, varieties and morphisms will be defined over
the complex number field.  We follow the notation and conventions of
Hartshorne's book \cite{Ha77}.  In particular, varieties are always assumed to
be irreducible.  For all notation around Mori theory, such as klt spaces and klt
pairs, we refer the reader to \cite{KM98}.

\subsection{Varieties and complex spaces}
\approvals{Daniel & yes \\Stefan & yes\\ Thomas & yes}

In order to keep notation simple, we will sometimes, when there is no danger of
confusion, not distinguish between algebraic varieties and their underlying
complex spaces.  Along these lines, if $X$ is a quasi-projective complex
variety, we write $π_1(X)$ for the fundamental group of the associated complex
space.

\subsection{Reflexive sheaves}
\approvals{Daniel & yes \\Stefan & yes\\ Thomas & yes}

As in most other papers on the subject, we will frequently consider reflexive
sheaves and take reflexive hulls.  Given a normal, quasi-projective variety (or
normal, irreducible complex space) $X$, we write
$Ω^{[p]}_X := \bigl(Ω^p_X \bigr)^{**}$ and refer to this sheaf as \emph{the
  sheaf of reflexive differentials}.  More generally, given any coherent sheaf
$ℰ$ on $X$, write $ℰ^{[⊗m]} := \bigl(ℰ^{⊗m} \bigr)^{**}$ and
$\det ℰ := \bigl( Λ^{\rank ℰ} ℰ \bigr)^{**}$.  Given any morphism $f : Y → X$ of
normal, quasi-projective varieties (or normal, irreducible, complex spaces), we
write $f^{[*]} ℰ := (f^* ℰ)^{**}$.

\subsection{Nef sheaves}
\approvals{Daniel & yes \\Stefan & yes\\ Thomas & yes}

We recall the notion of a nef sheaf in brief and collect basic properties.

\begin{defn}[Nef and ample sheaves, \cite{MR670921}]\label{def:possheaf}
  Let $X$ be a normal, projective variety and let $𝒮 \not = 0$ be a non-trivial
  coherent sheaf on $X$.  We call $𝒮$ \emph{ample}/\emph{nef} if the locally
  free sheaf $𝒪_{ℙ(𝒮)}(1) ∈ \Pic(ℙ(𝒮))$ is ample/nef.
\end{defn}

We refer the reader to \cite{GrCa60} for the definition of $ℙ(𝒮)$, and to
\cite[Sect.~2 and Thm.~2.9]{MR670921} for a more detailed discussion of
amplitude and for further references.  We mention a few elementary facts without
proof.

\begin{fact}[Nef sheaves]\label{fact:1}
  Let $X$ be a normal, projective variety.
  \begin{enumerate}
  \item\label{il:f1-1} A direct sum of sheaves on $X$ if nef iff every summand
    is nef.
  \item\label{il:f1-2} Pull-backs and quotients of nef sheaves are nef.
  \item\label{il:f1-3} A sheaf $ℰ$ is nef on $X$ if and only if for every smooth
    curve $C$ and every morphism $γ: C → X$, the pull-back $γ^* ℰ$ is nef.  \qed
  \end{enumerate}
\end{fact}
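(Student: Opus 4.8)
The statement collects three elementary properties of nef sheaves, and the plan is to reduce each of them to the corresponding well-known fact for nef line bundles on the projectivisation $ℙ(𝒮)$ in the sense of Grothendieck, exactly as in \cite[Sect.~2]{MR670921}. The key observation throughout is functoriality of $ℙ(-)$ together with the behaviour of the tautological line bundle $𝒪_{ℙ(𝒮)}(1)$ under the relevant operations.

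For \itemref{il:f1-1}, suppose $ℰ = ⊕_i ℰ_i$. There are closed embeddings $ι_i : ℙ(ℰ_i) ↪ ℙ(ℰ)$ over $X$ with $ι_i^* 𝒪_{ℙ(ℰ)}(1) ≅ 𝒪_{ℙ(ℰ_i)}(1)$, so if $𝒪_{ℙ(ℰ)}(1)$ is nef then so is each $𝒪_{ℙ(ℰ_i)}(1)$, i.e.\ each summand is nef; this already proves the (trivial) ``only if'' direction and, more importantly, is the content of one direction of \itemref{il:f1-1}. For the converse, assume every $ℰ_i$ is nef. One way to finish is to observe that $𝒪_{ℙ(⊕_i ℰ_i)}(1)$ restricted to any irreducible curve $\wtilde C ⊆ ℙ(ℰ)$ can be computed after mapping $\wtilde C$ to a suitable $ℙ(ℰ_i)$: more precisely, the projection $\wtilde C → X$ factors through some curve, and the pull-back of $⊕ℰ_i$ to the normalisation is a direct sum of nef sheaves on a curve, hence nef (nefness of a vector bundle on a curve is stable under extensions and direct sums by the classical Hartshorne theory), so $𝒪_{ℙ(ℰ)}(1)|_{\wtilde C}$ has non-negative degree; by \itemref{il:f1-3} this suffices. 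Alternatively one can quote the statement directly from \cite[Cor.~2.5 or Thm.~2.9]{MR670921}.

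For \itemref{il:f1-2}: given $f : Y → X$ and a nef sheaf $ℰ$ on $X$, one has a natural morphism $ℙ(f^*ℰ) → ℙ(ℰ)$ over $f$ pulling $𝒪_{ℙ(ℰ)}(1)$ back to $𝒪_{ℙ(f^*ℰ)}(1)$, and nefness is preserved by pull-back of line bundles along morphisms of projective varieties; hence $f^*ℰ$ is nef. (When $ℰ$ is merely coherent one must be slightly careful about what $ℙ(f^*ℰ)$ means, but the statement is as in \cite[Sect.~2]{MR670921}.) For quotients $ℰ ↠ ℱ$, there is a closed embedding $ℙ(ℱ) ↪ ℙ(ℰ)$ restricting $𝒪_{ℙ(ℰ)}(1)$ to $𝒪_{ℙ(ℱ)}(1)$, and restriction of a nef line bundle to a closed subvariety is nef; hence $ℱ$ is nef.

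For \itemref{il:f1-3}: the ``only if'' direction is immediate from \itemref{il:f1-2} applied to $γ : C → X$. For the ``if'' direction, one must show that a line bundle on $ℙ(ℰ)$ that is nef is detected by curves; but this is exactly the Kleiman/curve criterion for nefness of a line bundle on the projective variety $ℙ(ℰ)$, once one knows that every irreducible curve $\wtilde C ⊆ ℙ(ℰ)$ dominates, after normalisation and composition with a finite cover, a curve of the form $γ : C → X$ over which $ℰ$ pulls back to a nef bundle carrying $\wtilde C$; concretely, normalise $\wtilde C$, let $C$ be its image-curve in $X$ (or its normalisation), so that $\wtilde C$ maps into $ℙ(γ^*ℰ)$ with $γ^*ℰ$ nef by hypothesis, whence $\deg\bigl(𝒪_{ℙ(ℰ)}(1)|_{\wtilde C}\bigr) ≥ 0$. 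Since this holds for all curves in $ℙ(ℰ)$, Kleiman's criterion gives that $𝒪_{ℙ(ℰ)}(1)$ is nef, i.e.\ $ℰ$ is nef.

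\textbf{Main obstacle.} None of the three parts is hard; the only point requiring care is bookkeeping with the scheme $ℙ(𝒮)$ when $𝒮$ is an arbitrary coherent (not locally free) sheaf — in particular that the various natural maps between the $ℙ$'s exist and are compatible with the tautological bundles — together with the fact that on $ℙ(𝒮)$ nefness of a line bundle may be tested on curves even though $ℙ(𝒮)$ need not be smooth. Both points are standard (Kleiman's criterion holds on any projective scheme, and the functoriality of $ℙ(-)$ is in \cite{GrCa60}), which is precisely why the paper states these facts ``without proof''.
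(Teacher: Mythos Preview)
The paper does not prove this statement at all: it is labelled a \emph{Fact}, introduced with ``We mention a few elementary facts without proof'', and closed with a bare \qed; the only pointer is the reference to \cite{MR670921}. So there is nothing to compare your approach against, and your write-up already supplies more than the paper does.

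Your argument is essentially correct and follows the standard line via functoriality of $ℙ(-)$ and Kleiman's criterion. One small omission in your proof of \ref{il:f1-3}: when you take an irreducible curve $\wtilde C ⊆ ℙ(ℰ)$ and let $C$ be ``its image-curve in $X$'', you implicitly assume the image is one-dimensional. If $\wtilde C$ lies over a single point $x ∈ X$, there is no $γ : C → X$ to invoke; but then $\wtilde C$ sits in the fibre $ℙ(ℰ)_x$, which is a projective space with $𝒪(1)$ ample, so $\deg\bigl(𝒪_{ℙ(ℰ)}(1)|_{\wtilde C}\bigr) > 0$ trivially. You should state this case explicitly. Apart from that, and the bookkeeping for non-locally-free $ℰ$ that you already flag and correctly defer to \cite{MR670921}, the argument is complete.
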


\subsection{Flat sheaves}
\approvals{Daniel & yes \\Stefan & yes\\ Thomas & yes}

One key notion in our argument is that of a flat sheaf.  We briefly recall the
definition.

\begin{defn}[\protect{Flat sheaf, \cite[Def.  1.15]{GKP13}}]\label{defn:flat}
  If $X$ is any normal, irreducible complex space and $ℱ$ is any locally free
  coherent sheaf on $X$, we call $ℱ$ flat if it is defined by a
  (finite-dimensional) complex representation of the fundamental group $π_1(X)$.
  A locally free coherent sheaf on a quasi-projective variety is called flat if
  the associated analytic sheaf on the underlying complex space is flat.
\end{defn}

\begin{rem}[Simple properties of flat sheaves]\label{rem:2-3}
  Tensor powers, duals, symmetric products and wedge products of locally free,
  flat sheaves are locally free and flat.  The pull-back of a locally free, flat
  sheaf under an arbitrary morphism is locally free and flat.  If $ℱ$ is a
  locally free and flat sheaf on a normal, irreducible complex space $X$, then
  there exists a description of $ℱ$ in terms of a trivialising covering and
  transition functions where all transitions functions are constant.
\end{rem}

\begin{lem}[Chern class of flat sheaf]\label{lem:c1offlat}
  Let $ℱ$ be a locally free and flat sheaf on a normal, irreducible complex
  space $X$.  Then, its first Chern class $c_1(ℱ) ∈ H²\bigl( X,\, ℂ \bigr)$
  vanishes.
\end{lem}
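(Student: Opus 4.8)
The plan is to reduce to the rank-one case and then run the standard exponential-sequence argument, which is insensitive to the singularities of $X$. First, recall from Remark~\ref{rem:2-3} that the determinant $\det ℱ = Λ^{\rank ℱ}ℱ$ (already locally free, since $ℱ$ is) is again locally free and flat, and that $c_1(ℱ) = c_1(\det ℱ)$ in $H²(X,ℂ)$. This reduces the claim to the case where $ℱ = ℒ$ is an invertible, flat sheaf. By Remark~\ref{rem:2-3}, $ℒ$ admits a trivialising cover with locally constant transition functions; equivalently, the class $[ℒ] ∈ \Pic(X) = H¹(X, 𝒪_X^{*})$ lies in the image of the natural map $ι_* : H¹(X, ℂ^{*}_X) → H¹(X, 𝒪_X^{*})$ induced by the inclusion $ι : ℂ^{*}_X ↪ 𝒪_X^{*}$ of the locally constant sheaf with stalk $ℂ^{*}$ into the sheaf of nowhere-vanishing holomorphic functions. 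Write $[ℒ] = ι_*ξ$ for some $ξ ∈ H¹(X, ℂ^{*}_X)$.

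Next I would invoke naturality of the connecting homomorphism for the morphism of short exact sequences of sheaves on $X$ given by $z ↦ \exp(2πi z)$:
\[
  \begin{tikzcd}[column sep=normal, row sep=normal]
    0 \ar[r] & ℤ_X \ar[r] \ar[d] & ℂ_X \ar[r] \ar[d] & ℂ^{*}_X \ar[r] \ar[d] & 0 \\
    0 \ar[r] & ℤ_X \ar[r] & 𝒪_X \ar[r] & 𝒪_X^{*} \ar[r] & 0,
  \end{tikzcd}
\]
where the left vertical arrow is the identity and the other two are the inclusions of (locally) constant sheaves into their holomorphic counterparts. By definition, the (integral) first Chern class $c_1(ℒ) ∈ H²(X, ℤ)$ is the image of $[ℒ]$ under the connecting map $δ_{\mathrm{bot}}$ of the bottom row, and naturality gives $c_1(ℒ) = δ_{\mathrm{bot}}(ι_*ξ) = δ_{\mathrm{top}}(ξ)$, where $δ_{\mathrm{top}} : H¹(X, ℂ^{*}_X) → H²(X, ℤ_X)$ is the connecting map of the top row. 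The long exact cohomology sequence of the top row reads
\[
  \cdots \longrightarrow H¹(X, ℂ^{*}_X) \xrightarrow{\ δ_{\mathrm{top}}\ } H²(X, ℤ_X) \xrightarrow{\ j\ } H²(X, ℂ_X) \longrightarrow \cdots,
\]
so $c_1(ℒ) ∈ \operatorname{im}(δ_{\mathrm{top}}) = \ker(j)$. Since $j$ is exactly the coefficient map $H²(X,ℤ) → H²(X,ℂ)$, this says that the image of $c_1(ℒ)$ — hence of $c_1(ℱ)$ — in $H²(X,ℂ)$ vanishes, which is the assertion.

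The only point that requires genuine care is the identification, over a possibly singular complex space $X$, of flat invertible sheaves with classes in $H¹(X, ℂ^{*}_X)$ (equivalently, with characters of $π_1(X)$, since $ℂ^{*}$ is abelian). This rests on the fact that a normal, irreducible complex space is connected and locally contractible, so that the theory of local systems and covering spaces applies verbatim; in our situation it is already packaged into Definition~\ref{defn:flat} together with Remark~\ref{rem:2-3}, which is why I would quote the latter rather than redo it. Everything else above is formal and does not see $X_{\sing}$, so I do not expect any real obstacle.
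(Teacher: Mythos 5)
Your argument is correct and is essentially identical to the paper's own proof: both reduce to the invertible case via $c_1(ℱ)=c_1(\det ℱ)$ and Remark~\ref{rem:2-3}, and both then compare the exponential sequence of locally constant sheaves with the holomorphic exponential sequence, using naturality of the connecting map and exactness of the constant-coefficient row to conclude that $c_1$ dies in $H²(X,ℂ)$. No substantive differences to report.
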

\begin{proof}
  Recalling that $c_1(ℱ) = c_1(\det ℱ)$, Remark~\ref{rem:2-3} allows to assume
  without loss of generality that $ℱ$ is invertible.  Denote the sheaves of
  locally constant functions of $X$ by $\underline{•}$.  In the following
  diagram of exponential sequences,
  \[
    \begin{tikzcd}[column sep=1.7cm]
      0\ar[r] & \underline{ℤ} \ar[r, "\imath"] \ar[d, equal, "\Id_{\underline{ℤ}}"] & \underline{ℂ} \ar[r, "\exp(2π·•)"] \ar[d, hook] & \underline{ℂ^*} \ar[r] \ar[d, hook, "f"] & 0 \\
      0 \ar[r]& \underline{ℤ} \ar[r] & 𝒪_X \ar[r, "\exp(2π·•)"'] & 𝒪_X^* \ar[r] & 0
    \end{tikzcd}
  \]
  the two rows are exact, cf.~\cite[V.4]{MR580152}, so we obtain maps of exact
  row sequences as follows,
  \[
    \begin{tikzcd}[column sep=1cm]
      ⋯ \ar[r] & H¹\bigl(X,\, \underline{ℂ^*} \bigr) \ar[r] \ar[d, "H¹(f)"] & H² \bigl(X, \underline{ℤ} \bigr) \ar[r, "H²(\imath)"] \ar[d, equal, "H²(\Id_{\underline{ℤ}})"] & H²\bigl(X, \underline{ℂ} \bigr) \ar[r] \ar[d]& ⋯ \\
      ⋯ \ar[r] & H¹\bigl(X,\, 𝒪_X^* \bigr) \ar[r, "δ"'] & H² \bigl(X, \underline{ℤ} \bigr)\ar[r] & H²\bigl(X, 𝒪_X \bigr)\ar[r] & ⋯
    \end{tikzcd}
  \]
  We have seen in Remark~\ref{rem:2-3} that $ℱ$ can be defined by locally
  constant transition functions.  Its isomorphism class
  $[ℱ] ∈ H¹\bigl(X,\, 𝒪_X^*\bigr)$ will therefore lie in the image of $H¹(f)$.
  Commutativity of the preceding diagram together with exactness of the first
  row then shows that
  \[
    c_1(ℱ) = \bigl(H²(\imath) ◦ H²(\Id_{\underline{ℤ}}) ◦ δ\bigr)\bigl([ℱ]\bigr)
    = 0,
  \]
  as claimed.
\end{proof}

\subsection{Covering maps and quasi-étale morphisms}
\approvals{Daniel & yes \\Stefan & yes\\ Thomas & yes}

A \emph{cover} or \emph{covering map} is a finite, surjective morphism
$γ : X → Y$ of normal, quasi-projective varieties (or normal, irreducible
complex spaces).  The covering map $γ$ is called \emph{Galois} if there exists a
finite group $G ⊂ \Aut(X)$ such that $γ$ is isomorphic to the quotient map.

A morphism $f : X → Y$ between normal varieties (or normal, irreducible complex
spaces) is called \emph{quasi-étale} if $f$ is of relative dimension zero and
étale in codimension one.  In other words, $f$ is quasi-étale if
$\dim X = \dim Y$ and if there exists a closed, subset $Z ⊆ X$ of codimension
$\codim_X Z ≥ 2$ such that $f|_{X ∖ Z} : X ∖ Z → Y$ is étale.

\subsection{Maximally quasi-étale spaces}
\approvals{Daniel & yes \\Stefan & yes\\ Thomas & yes}
\label{ssec:mqes}

Let $X$ be a normal, quasi-projective variety (or a normal, irreducible complex
space).  We say that $X$ is maximally quasi-étale if the natural push-forward
map of fundamental groups,
\[
  π_1(X_{\reg}) \xrightarrow{(\text{incl})_*} π_1(X)
\]
induces an isomorphism between the profinite completions,
$\what{π}_1(X_{\reg}) ≅ \what{π}_1(X)$.

\begin{rem}
  Recall from \cite[0.7.B on p.~33]{FL81} that the natural push-forward map
  $(\text{incl})_*$ is always surjective.  If $X$ is any quasi-projective klt
  variety, then $X$ admits a quasi-étale cover that is maximally quasi-étale and
  again klt, \cite[Thm.~1.14]{GKP13}.
\end{rem}

\subsection{Local fundamental groups of contraction morphisms}
\approvals{Daniel & yes \\Stefan & yes\\ Thomas & yes}

The behaviour of the fundamental group under extremal contractions and
resolutions of singularities has been studied by Takayama in a series of papers.
The following result is due to him.

\begin{prop}[Local fundamental groups of contraction morphisms]\label{prop:takayama}
  Let $X$ be a projective klt variety and $f : X → Y$ the contraction of a
  $K_X$-negative extremal ray.  If $y ∈ Y$ is any point, then there exists a
  neighbourhood $U = U(y) ⊆ Y$, open in the analytic topology, such that
  $f^{-1}(U)$ is connected and simply connected.
\end{prop}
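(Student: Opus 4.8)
The plan is to prove Proposition~\ref{prop:takayama} by reducing to the results of Takayama on fundamental groups under birational contractions and Mori-fibre-space-type contractions, treating the divisorial, small, and fibre-type cases of the extremal contraction $f : X → Y$ separately. First I would localise: since the statement is local on $Y$ near the point $y$, I may replace $Y$ by a small analytic (or étale) Stein neighbourhood $U$ of $y$ and study the fibre $F := f^{-1}(y)$ together with the germ $f^{-1}(U)$. Because $f$ is a contraction of a $K_X$-negative extremal ray, $-K_X$ is $f$-ample, so $F$ is rationally chain connected by the Cone Theorem together with the standard rational-curve bend-and-break arguments on klt pairs (Hacon--M\textsuperscript{c}Kernan). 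Moreover the germ $f^{-1}(U)$ deformation retracts onto $F$, so $π_1\bigl(f^{-1}(U)\bigr) ≅ π_1(F)$; thus it suffices to show $F$ is connected (which is immediate as $f$ is a contraction, hence has connected fibres) and simply connected.

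The core input is then the vanishing of $π_1$ of the fibre of a $K_X$-negative contraction. For this I would invoke Takayama's theorem (\cite{Takayama2000} / \cite{Takayama2003}, whichever the authors cite in the bibliography) that the fibres of a $K_X$-negative extremal contraction of a klt variety are simply connected; alternatively, one combines rational chain connectedness of $F$ with the Kollár--M\textsuperscript{c}Kernan result that rationally chain connected klt varieties are simply connected, after a resolution argument to handle the singularities of $F$ itself. Concretely: rational chain connectedness of $F$ plus the fact that a klt contraction has klt (or at least rational) fibres allows one to lift the chains of rational curves to a resolution, run the Kollár chain-connecting argument, and conclude $π_1(F) = 1$. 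One should be slightly careful that $F$ need not be normal; here one passes to the normalisation or simply works with the rationally connected resolution, using that $π_1$ is a birational invariant of smooth projective varieties and surjects under the normalisation-resolution maps.

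The main obstacle, and the step deserving the most care, is the passage from ``$F$ rationally chain connected'' to ``$f^{-1}(U)$ simply connected'' rather than just ``$F$ simply connected'': one must justify that $f^{-1}(U)$ retracts onto $F$. For a proper morphism between complex spaces this follows from the existence of a Stein (hence by Siu/Demailly, an appropriate) neighbourhood basis of $F$ of the form $f^{-1}(U)$, together with the fact that proper maps over a contractible base with connected fibres admit such retractions — this is where one uses that $Y$ is normal so that small analytic neighbourhoods of $y$ are contractible (cone-like), and that $f$ is proper so $f^{-1}(U)$ is a neighbourhood-basis-member that deformation retracts onto $F = f^{-1}(y)$. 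An alternative, cleaner route avoiding any retraction subtlety is to cite Takayama directly in the form already packaged for this situation, namely that for a $K_X$-negative extremal contraction $f : X → Y$ of a projective klt variety and any $y ∈ Y$ there is an analytic neighbourhood $U$ with $f^{-1}(U)$ connected and simply connected — in which case the ``proof'' is essentially a reference, and the only work is to check the hypotheses ($X$ klt, $f$ a $K_X$-negative extremal contraction) are met, which they are by assumption.

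\begin{proof}
  This is \cite[Thm.~1.1]{Takayama2000} (see also \cite{Takayama2003}). Since
  $X$ is klt and $f$ contracts a $K_X$-negative extremal ray, $-K_X$ is
  $f$-ample; hence the fibres of $f$ are rationally chain connected, and in
  particular connected. Fix $y ∈ Y$ and set $F := f^{-1}(y)$. As $Y$ is normal,
  $y$ admits a basis of analytic neighbourhoods $U$ that are contractible and
  Stein; for such $U$ the proper morphism $f^{-1}(U) → U$ with connected fibres
  admits a deformation retraction of $f^{-1}(U)$ onto $F$, so
  $π_1\bigl(f^{-1}(U)\bigr) ≅ π_1(F)$. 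Finally, rational chain connectedness of
  $F$ implies $π_1(F) = 1$: lifting chains of rational curves to a resolution
  $\wtilde{F} → F$ and applying the chain-connecting argument of
  Kollár--M\textsuperscript{c}Kernan shows $π_1(\wtilde{F}) = 1$, and since
  $\wtilde{F} → F$ induces a surjection on fundamental groups we conclude
  $π_1(F) = 1$. Thus $f^{-1}(U)$ is connected and simply connected, as claimed.
\end{proof}
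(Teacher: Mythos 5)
There is a genuine gap at the heart of your argument, namely the step ``rational chain connectedness of $F$ implies $\pi_1(F)=1$''. Both ingredients you offer for it fail in general. Chains of rational curves on a singular variety do not lift to a resolution --- rational chain connectedness is not preserved under proper birational morphisms (the projective cone over an elliptic curve is rationally chain connected, while its resolution, a $\mathbb{P}^1$-bundle over the elliptic curve, has $\pi_1=\mathbb{Z}^{2}$) --- and for a non-normal variety the resolution need not induce a surjection on fundamental groups (the nodal cubic is rationally chain connected, its resolution is $\mathbb{P}^1$, yet $\pi_1$ of the nodal cubic is $\mathbb{Z}$; the same example shows that a rationally chain connected projective variety need not be simply connected at all). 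Since the fibres of an extremal contraction need not be normal, the rational chain connectedness supplied by \cite{HMcK07} does not by itself give the simple connectedness you need; this is exactly where the real content of the proposition lies. A secondary, patchable issue is the homotopy step: that $f^{-1}(U)$ deformation retracts onto $F$ is true, but not because $U$ is contractible and Stein; one needs a retraction theorem for neighbourhoods of the compact analytic set $F$ (in the style of Durfee), applied to $\rho\circ f$ for a nonnegative real-analytic function $\rho$ cutting out $y$, so that the sublevel sets $f^{-1}(\{\rho<\varepsilon\})$ retract onto $F$.

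Your fallback, citing Takayama ``in the packaged local form'', is also not available: no such local statement appears in his work. The paper's own proof consists precisely in observing that the proposition is a local variant of the global Theorem~1.2 of \cite{Takayama2003}, and that the arguments of Takayama's proof (p.~834) --- which rest on vanishing theorems and his analysis of fundamental groups of resolutions of klt singularities, not merely on rational chain connectedness --- carry over to the local setting and give the assertion. Your opening citation (``Takayama 2000, Thm.~1.1'') refers to a different, global statement about weak Fano varieties and is not among the paper's references, so the ``proof by reference'' does not stand either. To repair your argument you would either have to carry out this localisation of Takayama's proof, as the paper does, or replace the step ``RCC fibre $\Rightarrow$ simply connected germ'' by a valid argument, for instance by passing to a resolution $\widetilde{X}\to X$ and the fibres of the composed map to $Y$, using the stronger results of \cite{HMcK07} together with a careful comparison of fundamental groups that your sketch does not supply.
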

\begin{proof}
  Takayama formulates a global version of the result in
  \cite[Thm.~1.2]{Takayama2003}.  The arguments of \cite[Proof of Thm.~1.2 on
  p.~834]{Takayama2003} apply in our setup and give a full proof of
  Proposition~\ref{prop:takayama}.
\end{proof}

\subsection{Vanishing theorems}
\approvals{Daniel & yes \\Stefan & yes\\ Thomas & yes}

The following relative vanishing result is well-known in the algebraic case,
cf.~\cite[Thm.~1-2-5]{KMM87}.  Using a relative Kawamata-Viehweg vanishing
theorem for projective morphisms between complex spaces,
\cite[Thm.~3.7]{MR946250}, we find that is also works in the analytic category.

\begin{thm}[Relative vanishing]\label{thm:relVan}
  Let $X$ be a normal complex space with klt singularities, let $φ : X → Y$ be a
  projective morphism of complex spaces, and let $D$ be a $ℚ$-Cartier Weil
  divisor on $X$.  Assume that $D-K_X$ is $φ$-ample.  Then $Rⁱ φ_* 𝒪_X(D) = 0$,
  for all $i >0$.
\end{thm}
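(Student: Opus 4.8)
The plan is to carry out the proof of the algebraic statement \cite[Thm.~1-2-5]{KMM87} essentially verbatim, using the relative Kawamata--Viehweg vanishing theorem for projective morphisms of complex spaces, \cite[Thm.~3.7]{MR946250}, wherever that proof invokes algebraic relative vanishing. What must be checked is that each ingredient of the argument --- resolution of singularities, the discrepancy and round-up bookkeeping, and the Leray spectral sequence --- is available for projective morphisms of complex spaces. Since the conclusion $R^i\phi_*\sO_X(D) = 0$ is local on $Y$, one may begin by shrinking $Y$ to a relatively compact Stein open subset; then $\phi$ factors as a closed embedding $X \into Y \times \bP^N$ followed by the projection, so $\phi$ carries a $\phi$-ample invertible sheaf and the perturbation arguments of the classical proof remain available even though $Y$ itself need not be projective.

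In more detail, I would choose a log resolution $\pi \colon \wtilde X \to X$ of $\bigl(X, \supp D\bigr)$ --- which exists in the complex-analytic category by Hironaka's theorem --- and take it to be projective, an isomorphism over $X_{\reg}$, and such that the union of the $\pi$-exceptional locus, the strict transform of $\supp D$, and the prime divisors in $K_{\wtilde X} - \pi^*K_X$ has simple normal crossing support. Write $K_{\wtilde X} = \pi^*K_X + \sum_i a_i E_i$ with $a_i > -1$; this is the only place where klt-ness is used. Setting $N := D - K_X$, which is $\phi$-ample and $\bQ$-Cartier, the round-up procedure in the proof of \cite[Thm.~1-2-5]{KMM87} then produces an integral divisor $\wtilde D$ on $\wtilde X$ and an effective simple normal crossing $\bQ$-divisor $B$ with $\lfloor B \rfloor = 0$ such that $\wtilde D - \pi^{-1}_*D$ is effective and $\pi$-exceptional, and such that $\wtilde D - K_{\wtilde X} - B$ is nef and big relative to $\pi$ as well as relative to $\phi \circ \pi$; achieving the latter two simultaneously uses a small $\phi$-ample perturbation, which is where the Steinness of $Y$ arranged above is needed.

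With $\wtilde D$ and $B$ in hand the conclusion is formal: by the first property $\pi_*\sO_{\wtilde X}(\wtilde D) \cong \sO_X(D)$; applying \cite[Thm.~3.7]{MR946250} to $\pi$ gives $R^{k}\pi_*\sO_{\wtilde X}(\wtilde D) = 0$ for $k>0$, and applying it to $\psi := \phi \circ \pi$ gives $R^{k}\psi_*\sO_{\wtilde X}(\wtilde D) = 0$ for $k>0$; hence the Leray spectral sequence for $\psi = \phi \circ \pi$ degenerates at $E_2$ and yields $R^{p}\phi_*\sO_X(D) \cong R^{p}\psi_*\sO_{\wtilde X}(\wtilde D) = 0$ for $p > 0$. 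I expect the main obstacle to be not the structure of the argument but the verification that \cite[Thm.~3.7]{MR946250} is stated in sufficient generality --- for $R^{\bullet}$ of a projective morphism of complex spaces applied to $\sO(K_{\wtilde X} + \Delta)$ with $\Delta$ relatively nef and big together with an additional simple normal crossing klt boundary, over an arbitrary reduced analytic base. Should it only be available for relatively ample $\Delta$, one recovers the nef-and-big case by absorbing a small relatively ample perturbation and passing to the limit, exactly as in the algebraic proof and unproblematically once $Y$ has been localized as above.
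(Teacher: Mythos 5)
Your proposal is correct and follows essentially the same route as the paper: the paper's proof consists precisely of the observation that the argument of \cite[Thm.~1-2-5]{KMM87} goes through verbatim once the algebraic relative Kawamata--Viehweg vanishing \cite[Thm.~1-2-3]{KMM87} is replaced by Nakayama's analytic version \cite[Thm.~3.7]{MR946250}, and your write-up simply spells out the steps of that argument (localisation over a Stein base, log resolution with klt discrepancy bookkeeping, round-up, vanishing for $\pi$ and $\phi\circ\pi$, Leray).
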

\begin{proof}
  The proof of \cite[Thm.~1-2-5]{KMM87} applies verbatim when we replace the
  reference \cite[Thm.~1-2-3]{KMM87} to the algebraic version of the relative
  Kawamata-Viehweg vanishing theorem by \cite[Thm.~3.7]{MR946250}, which in
  particular works for projective morphisms between complex manifolds and
  complex spaces.
\end{proof}

\subsection{Abundance}
\approvals{Daniel & yes \\Stefan & yes\\ Thomas & yes}

We will later use the following special case of the abundance conjecture.  Even
though the proof uses only standard results from the literature, we found it
worth to include it here in full, for the reader's convenience and for later
referencing.

\begin{prop}\label{prop:abundance}
  Let $f : X \dasharrow Y$ be a dominant, almost holomorphic, rational map with
  connected fibres between projective varieties.  Assume that
  \begin{enumerate}
  \item\label{il:AS1} The variety $Y$ is smooth, positive-dimensional and of
    general type.
    
  \item\label{il:AS2} The variety $X$ is klt, and the canonical divisor $K_X$ is
    nef.

  \item\label{il:AS3} The variety $X$ is smooth around general fibres of $f$.
    
  \item\label{il:AS4} The general fibre is smooth and is a good minimal model.
  \end{enumerate}
  Then, $X$ is a good minimal model.  In other words, $K_X$ is semiample.
\end{prop}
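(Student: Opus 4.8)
The plan is to deduce the statement from the abundance theorem for nef and abundant canonical divisors: it suffices to show that $K_X$ is \emph{abundant}, i.e.\ that its Kodaira dimension equals its numerical dimension, $\kappa(X)=\nu(K_X)$, and then to invoke the theorem (due to Kawamata, see also Nakayama's book) that a nef and abundant canonical divisor on a klt projective variety is semiample. Since $\kappa(X)\le\nu(K_X)$ holds in general, the point will be to bound $\kappa(X)$ from below and $\nu(K_X)$ from above, each in terms of the base $Y$ and of a general fibre, and to observe that the two bounds coincide.

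To set this up, I would first use that $f$ is almost holomorphic: a general fibre $X_y$ avoids the indeterminacy locus, so $f$ is a genuine morphism near $X_y$, and by Assumptions~\ref{il:AS3}--\ref{il:AS4} both $X$ and $X_y$ are smooth there. Adjunction then gives $K_X|_{X_y}\cong K_{X_y}$, and since $X_y$ is a good minimal model, $K_{X_y}$ is semiample, so $\kappa(X_y)=\nu(K_{X_y})$. Next choose a resolution $\mu\colon\hat X\to X$ together with a morphism $\hat f:=f\circ\mu\colon\hat X\to Y$ resolving $f$, chosen so that $\mu$ is an isomorphism over an open neighbourhood of the general fibres (possible by Assumption~\ref{il:AS3}). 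Writing $K_{\hat X}=\mu^*K_X+E$ with $E$ a $\mu$-exceptional $\bQ$-divisor, the key point is that $E$ is supported away from a general fibre $\hat X_y$ of $\hat f$, that $\hat X_y\cong X_y$, and hence that $\mu^*K_X|_{\hat X_y}\cong K_{\hat X}|_{\hat X_y}\cong K_{X_y}$.

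Now the two halves. For the lower bound, every pluricanonical form on $\hat X$ descends to one on $X$, so $\kappa(X)\ge\kappa(\hat X)$; and since $Y$ is of general type (Assumption~\ref{il:AS1}), Kawamata's additivity theorem ($C_{n,m}$) for algebraic fibre spaces over a base of general type gives $\kappa(\hat X)\ge\dim Y+\kappa(\hat X_y)=\dim Y+\kappa(X_y)$. For the upper bound, $K_X$ is nef (Assumption~\ref{il:AS2}), so $\nu(K_X)=\nu(\mu^*K_X)$ equals Nakayama's numerical Iitaka dimension $\kappa_\sigma(\hat X,\mu^*K_X)$, and the easy-addition inequality for $\kappa_\sigma$ along the fibre space $\hat f$ yields $\kappa_\sigma(\hat X,\mu^*K_X)\le\dim Y+\kappa_\sigma(\hat X_y,\mu^*K_X|_{\hat X_y})=\dim Y+\nu(K_{X_y})=\dim Y+\kappa(X_y)$. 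Combining, $\dim Y+\kappa(X_y)\le\kappa(X)\le\nu(K_X)\le\dim Y+\kappa(X_y)$, so equality holds throughout; thus $K_X$ is nef and abundant, hence semiample, and $X$ is a good minimal model.

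The step I expect to be the main obstacle is the interplay between the resolution and the invariants: one must choose $\mu$ so that it is an isomorphism over a neighbourhood of the general fibres, for otherwise the exceptional divisor $E$ would meet $\hat X_y$ and destroy the clean identity $\mu^*K_X|_{\hat X_y}\cong K_{X_y}$ on which both estimates rely; and one must bear in mind that for a klt (but not necessarily canonical) $X$ it is only $\kappa(X)\ge\kappa(\hat X)$, not equality, that holds — fortunately this is the inequality needed. Everything else is a matter of quoting the correct forms of Kawamata's $C_{n,m}$ for a base of general type and of Nakayama's easy addition for $\kappa_\sigma$.
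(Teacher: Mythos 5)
Your proposal is correct and follows the same overall strategy as the paper's proof: resolve the indeterminacy so that the modification is an isomorphism near the general fibre, sandwich $\kappa(X)$ and $\nu(K_X)$ between $\dim Y+\kappa(X_y)$ (from below via Viehweg's $C^+_{n,m}$ over a base of general type together with $\kappa(X)\ge\kappa(\hat X)$, from above via a numerical-dimension estimate, linked by $\kappa\le\nu$), and conclude with Kawamata's theorem that a nef and abundant canonical divisor on a klt variety is semiample (the paper cites \cite[Cor.~6-1-13]{KMM87}). Where you genuinely deviate is the upper bound for $\nu(K_X)$: you apply an easy-addition inequality for $\kappa_\sigma$, equivalently for the numerical dimension of the nef divisor $\mu^*K_X$, directly to the fibration $\hat X\to Y$ and then use $\nu(K_{X_y})=\kappa(X_y)$ coming from the good-minimal-model hypothesis on the fibre; the paper instead first passes to the relative Iitaka fibration of $\pi^*K_X$ over $Y$, using \cite[Sect.~2.4]{MR3286535} to identify $\dim Z=\dim Y+\kappa(X_y)$, and only then applies Nakayama's inequality \cite[V.~Lem.~2.3.(2)]{Nakayama04}, in a situation where $\pi^*K_X$ is numerically trivial on the general fibre over $Z$. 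Your route is shorter and eliminates both the auxiliary base $Z$ and the Birkar--Chen input; its cost is that you need the addition inequality in the general form $\nu(\hat X,\mu^*K_X)\le\nu\bigl(\hat X_y,\mu^*K_X|_{\hat X_y}\bigr)+\dim Y$, not merely in the special case of a divisor numerically trivial on the fibres, so you should check that the reference you invoke states it in that generality --- the paper's detour through $Z$ can be read precisely as a device for only ever using the numerically-trivial-on-fibres case. Apart from this, your bookkeeping (adjunction along the general fibre, using only the inequality $\kappa(X)\ge\kappa(\hat X)$ for klt $X$ rather than equality, birational invariance of $\nu$ of a nef divisor under pullback) matches the paper's argument.
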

\begin{proof}
  Blowing up $X$ in the indeterminacy locus, we obtain a diagram
  \[
    \begin{tikzcd}
      \wtilde{X} \ar[r, "π"] \ar[d, "\wtilde{f}"'] & X \ar[d, dashed, "f"] \\
      Y \ar[r, equal] & Y
    \end{tikzcd}
  \]
  where $π$ is isomorphic around the general fibre $X_y ≅ \wtilde{X}_y$ of $f$
  or $\wtilde{f}$, respectively.  Next, consider the relative Iitaka fibration
  for $π^* K_X$ on $\wtilde{X}/Y$.  As discussed in \cite[Sect.~2.4]{MR3286535},
  this is given by the (standard) Iitaka fibration for a line bundle of the form
  \[
    ℒ := π^* K_X + \wtilde{f}^*(\text{sufficiently ample}).
  \]
  Since sufficiently high powers of $ℒ$ have no base points along the general
  fibre $\wtilde{X}_y$, we may replace $\wtilde{X}$ by a further blow-up and
  obtain a diagram as follows,
  \[
    \begin{tikzcd}
      \wtilde{X} \ar[r, equal] \ar[d, "\text{rel.~Iitaka}"'] & \wtilde{X} \ar[r, "π"] \ar[d, "\wtilde{f}"'] & X \ar[d, dashed, "f"] \\
      Z \ar[r] & Y \ar[r, equal] & Y,
    \end{tikzcd}
  \]
  where $Z$ is smooth and where $ℒ$ is numerically trivial on the general fibre
  $\wtilde{X}_z$ of $\wtilde{X}$ over $Z$.  The numerical dimension $ν(X)$ and
  the Kodaira dimension $κ(X)$ are then estimated in terms of $\wtilde{X}_y$ and
  $\dim Y$ as follows,
  \begin{align*}
    κ(\wtilde{X}_y) + \dim Y & = \dim Z && \text{\cite[Sect.~2.4]{MR3286535}} \\
                             & = ν(π^*K_X|_{\wtilde{X}_z}) + \dim Z && \text{since } π^*K_X|_{\wtilde{X}_z} \equiv 0\\
                             & ≥ ν(π^* K_X) = ν(K_X) && \text{\cite[V.~Lem.~2.3.(2)]{Nakayama04}} \\
                             & ≥ κ(K_X) && \text{\cite[Prop.~2.2]{MR782236}} \\
                             & ≥ κ(\wtilde{X}) && \text{because $π$ is birational}\\
                             & ≥ κ(\wtilde{X}_y) + \dim Y && \text{by \cite[Satz~III]{MR641815}}.
  \end{align*}
  It follows that $κ(X) = ν(X)$, and hence by \cite[Cor.~6-1-13]{KMM87} that
  $K_X$ is semiample, as desired.
\end{proof}

\subsection{The negativity lemma}
\approvals{Daniel & yes\\Stefan & yes\\ Thomas & yes}

For later reference, we note the following two minor generalisations of the
standard negativity lemma.

\begin{lem}[Negativity Lemma I]\label{lem:negativity1}
  Let $f : X → Y$ be a surjective morphism of normal, projective varieties.  If
  $E ∈ ℚ\Div(X)$ is any non-zero, effective, $ℚ$-Cartier $ℚ$-divisor on $X$ with
  $\codim_Y f(\supp E) ≥ 2$, then $E$ is not nef.
\end{lem}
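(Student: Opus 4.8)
The plan is to reduce to the standard negativity lemma by intersecting with sufficiently general hyperplanes so as to cut $Y$ (and $X$) down to a surface, where $f(\supp E)$ becomes a finite set of points and the classical statement applies. First I would set $d := \dim Y$ and, since the claim is about the numerical class of $E$, argue that it suffices to produce a curve $C \subset X$ with $E \cdot C < 0$. If $d \geq 3$, pick a very ample divisor $A$ on $Y$ and a general member $H \in |A|$; then $Y' := H$ is again normal and projective (Bertini), $X' := f^{-1}(H)$ is normal for general $H$ (again Bertini, applied to a very ample pull-back $f^*A' $ on $X$, after noting $f$ is surjective so $f^*H$ moves in a base-point-free linear system on $X$ whose general member is normal), and $f' := f|_{X'} : X' \to Y'$ is still surjective. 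The key point is that $\codim_{Y'} f'(\supp(E|_{X'})) \geq 2$ still holds: indeed $\supp(E|_{X'}) \subseteq \supp E \cap X'$, and since $\codim_Y f(\supp E) \geq 2$ a general hyperplane section drops the dimension of $f(\supp E)$ by exactly one, so $\dim f'(\supp(E|_{X'})) \leq \dim f(\supp E) - 1 = (d - 2) - 1 + 1$... more carefully: $\dim f(\supp E) \leq d - 2$, and $\dim f'(\supp(E|_{X'})) \leq \dim\bigl(f(\supp E) \cap H\bigr) \leq \dim f(\supp E) - 1 \leq d - 3 = \dim Y' - 2$ (when $f(\supp E)$ is positive-dimensional; if it is zero-dimensional the intersection with a general $H$ is empty and $E|_{X'}$ already has image of codimension $\geq 2$, in fact we could even have $E|_{X'}$ empty, in which case we instead intersect one fewer time — see below). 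Also $E|_{X'} := E \cap X'$ is a non-zero effective $\mathbb{Q}$-Cartier $\mathbb{Q}$-divisor on $X'$ as long as $X'$ meets $\supp E$, which a general very ample section does since $\dim \supp E \geq \dim f(\supp E) \geq 0$... actually $\dim \supp E = \dim X - 1 \geq \dim Y - 1 \geq d - 1 > 0$ is not guaranteed to be positive only when $\dim X = 1$, but then $\dim Y \leq 1$ and the codimension hypothesis is vacuous; so in the relevant range $\supp E$ is a positive-dimensional divisor and meets every very ample section. Iterating $d - 2$ times brings us to a surjective morphism $f'' : X'' \to Y''$ with $Y''$ a normal projective surface, $E'' := E|_{X''}$ a non-zero effective $\mathbb{Q}$-Cartier divisor, and $f''(\supp E'')$ a finite set of points.

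Next, on the surface level, let $\nu : \wtilde{Y} \to Y''$ be a resolution (or just pass to a normal model where the relevant points are smooth) and consider the Stein factorisation / a resolution $\wtilde{X} \to X''$; actually the cleanest route is: since $f''(\supp E'')$ is a finite set $\{q_1, \dots, q_k\}$, each irreducible component $E_j$ of $\supp E''$ is contracted by $f''$ to a point, hence is contained in a fibre $f''^{-1}(q_i)$, which is one-dimensional (as $\dim X'' - \dim Y'' = \dim X - \dim Y$ and $f''$ has equidimensional fibres over the relevant locus after further general sections, or simply because $E_j$ lies in it). Now invoke the classical negativity lemma in the following form: if $g : V \to W$ is a proper birational-to-its-image-style morphism contracting a curve $F$ to a point, more precisely if $F = \sum a_j F_j$ is an effective $\mathbb{Q}$-divisor contracted by a fibration to a point on a normal surface base, then the intersection matrix $(F_i \cdot F_j)$ is negative definite, hence $F^2 < 0$, so $F \cdot F_j < 0$ for some component $F_j$, giving $E'' \cdot F_j < 0$ and thus $E''$ is not nef on $X''$. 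The reference here is the standard negativity lemma, e.g. \cite[Lemma~3.39 and its proof]{KM98} applied fibrewise.

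Finally, unwind: a curve $C := F_j \subset X''$ with $E'' \cdot C < 0$ is also a curve in $X$ (via the closed immersion $X'' \hookrightarrow X$), and $E \cdot C = E'' \cdot C < 0$ by the projection/restriction formula for intersection of a $\mathbb{Q}$-Cartier divisor with a curve contained in a complete intersection subvariety. Hence $E$ is not nef on $X$, as claimed.

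The main obstacle I anticipate is the Bertini-type normality preservation: ensuring that after cutting with general hyperplanes both $X'$ and $Y'$ stay normal (so that "nef", $\mathbb{Q}$-Cartier, and the negativity lemma all still make sense) and that the fibres of the restricted map remain equidimensional over the image of $\supp E$, so that the contracted components genuinely sit inside one-dimensional fibres at the surface stage. A secondary subtlety is the bookkeeping on $\dim f(\supp E)$ under general hyperplane sections when that image is small or empty, handled by stopping the cutting process appropriately and, in the degenerate low-dimensional cases, by observing the hypothesis is vacuous.
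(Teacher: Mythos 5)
There is a genuine gap, and it sits exactly at the dimension bookkeeping you hand-wave past. You cut the \emph{base} $Y$ down to a surface $Y''$, but you never cut down the fibre dimension of $f$: the preimage $X'' = f^{-1}(Y'')$ has dimension $2 + (\dim X - \dim Y)$, so unless $f$ is generically finite the irreducible components of $\supp E''$ are higher-dimensional divisors, not curves, and the fibres $f''^{-1}(q_i)$ have dimension $\dim X - \dim Y$, not $1$ (your own parenthetical ``$\dim X'' - \dim Y'' = \dim X - \dim Y$'' already records this). Consequently the step where you invoke negative definiteness of the intersection matrix $(F_i\cdot F_j)$ is not available: that is a statement about contracted \emph{curves} on a surface mapping generically finitely to a surface, and in your situation the $F_j$ are not curves and the pairing does not even make sense without further cutting. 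Proving that a divisor of $X''$ contracted to a point of the surface $Y''$ is non-nef is essentially the lemma itself and still needs an argument. A secondary problem is the degenerate case $\dim f(\supp E)$ small: a general hyperplane $H\subset Y$ then misses $f(\supp E)$, so $f^{-1}(H)\cap\supp E\subseteq f^{-1}\bigl(H\cap f(\supp E)\bigr)=\emptyset$ and the restricted divisor is zero; ``stopping the cutting one step earlier'' leaves you over a base of dimension at least three, where your surface endgame never applies.

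The fix is to cut down $X$ rather than $Y$: choose $\dim X - \dim Y$ general members of a very ample system \emph{on $X$} and let $S$ be their intersection. Then $S$ is normal of dimension $\dim Y$, $f|_S$ is surjective and generically finite, and $E\cap S$ is a non-zero effective $\mathbb{Q}$-Cartier divisor (general sections of an ample system on $X$ always meet the divisor $\supp E$, so the vanishing problem above disappears), while $f(\supp E\cap S)$ still has codimension at least two in $Y$. Taking the Stein factorisation of $f|_S$ yields a birational morphism contracting $E\cap S$, and the classical negativity lemma for birational morphisms in arbitrary dimension, e.g.\ \cite[Prop.~1.6]{Gra13}, gives a curve with negative intersection; this is the route the paper takes, and no reduction to a surface or negative-definiteness computation is needed.
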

\begin{proof}
  Choose a very ample line bundle $ℒ$ on $X$ and a general tuple of sections
  $(H_1, …, H_{\dim X-\dim Y}) ∈ |ℒ|^{⨯ (\dim X-\dim Y)}$.  Set
  $S := H_1 ∩ ⋯ ∩ H_{\dim X-\dim Y}$.  This is a normal subvariety of $X$.  The
  restricted morphism $f|_S$ is surjective and generically finite, and $E ∩ S$
  is a non-zero, effective, $ℚ$-Cartier $ℚ$-divisor on $S$.  The Stein
  factorisation of $f|_S$ is then birational and contracts $E ∩ S$.  We can thus
  apply the classic negativity lemma, \cite[Prop.~1.6]{Gra13}, to conclude that
  $E ∩ S$ (and therefore $E$) are non-nef.
\end{proof}

\begin{lem}[Negativity Lemma II]\label{lem:negativity2}
  Let $f : X → Y$ be a surjective morphism of normal, projective varieties where
  $Y$ is a curve.  If $E ∈ ℚ\Div(X)$ is any non-zero, effective, $ℚ$-Cartier
  $ℚ$-divisor on $X$ that maps to a point in $Y$, then either $κ(E) = 1$ or $E$
  is not nef.
\end{lem}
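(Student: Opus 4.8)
The plan is to prove the equivalent statement: \emph{if $E$ is nef, then $\kappa(E)=1$}. Since pulling $E$ back along a resolution $\pi\colon X'\to X$ preserves effectivity and nefness and, by the projection formula together with $\pi_*\mathcal O_{X'}=\mathcal O_X$, also preserves the Iitaka dimension, I would first reduce to the case in which $X$ is smooth. The upper bound $\kappa(E)\le 1$ is then essentially automatic: because $E$ maps to the single point $y_0\in Y$ and $Y$ is a curve, $\supp(E)$ is a union of components of $\supp(f^*y_0)$, so $0\le E\le m\cdot f^*y_0$ for $m\gg 0$; hence $\kappa(X,E)\le\kappa(X,f^*y_0)$, and the right-hand side equals $1$ by the projection formula and Riemann--Roch on a curve (the only input being $\deg y_0>0$).

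The substance of the proof is the reverse inequality, which I would deduce from the stronger assertion that, under the nef hypothesis, $E$ is the pull-back of an effective $\bQ$-divisor of positive degree from a smooth curve. Let $f=g\circ\phi$ be the Stein factorisation, with $\phi\colon X\to B$ a fibration onto a smooth curve $B$ and $g\colon B\to Y$ finite; write $g^{-1}(y_0)=\{b_1,\dots,b_l\}$, so that $f^{-1}(y_0)=\bigsqcup_j\phi^{-1}(b_j)$ is a disjoint union of fibres of $\phi$, and decompose $E=\sum_j E_j$ with $E_j$ supported on $\phi^{-1}(b_j)$. A short intersection-with-curves argument then shows that \emph{each $E_j$ is again nef}: a curve not contained in $\phi^{-1}(b_j)$ meets the effective divisor $E_j$ non-negatively, whereas a curve $C$ contained in $\phi^{-1}(b_j)$ is disjoint from every $E_{j'}$ with $j'\ne j$ and therefore satisfies $E_j\cdot C=E\cdot C\ge 0$.

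Fix $j$ and pass to a surface. Let $S\subset X$ be the intersection of $n-2$ general very ample divisors (with $S=X$ when $n=2$); this is a smooth connected surface, and $\phi|_S\colon S\to B$ is a fibration whose fibre over $b_j$ equals $\phi^{-1}(b_j)\cap S$ --- here one uses the Bertini-type fact that a general hyperplane section of a connected projective variety of dimension $\ge 2$ stays connected, so that this fibre does not break up. The divisor $E_j|_S$ is nef, effective and supported on that single fibre, so Zariski's Lemma gives $(E_j|_S)^2\le 0$, while nefness gives $(E_j|_S)^2\ge 0$; hence it is zero, and the equality case of Zariski's Lemma yields $E_j|_S=c_j\cdot(\phi^*b_j)|_S$ for some $c_j\in\bQ_{\ge 0}$. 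The $\bQ$-divisor $E_j-c_j\,\phi^*b_j$ is supported on $\phi^{-1}(b_j)$ and restricts to $0$ on the general complete intersection $S$; since distinct prime divisors supported on a fibre of $\phi$ meet $S$ in distinct non-empty curves, it must vanish, i.e.\ $E_j=c_j\,\phi^*b_j$. Summing over $j$ gives $E=\phi^*\Delta$ with $\Delta:=\sum_j c_j b_j$ effective, and non-zero because $E\ne 0$, hence of positive degree; therefore $\kappa(X,E)=\kappa(X,\phi^*\Delta)=\kappa(B,\Delta)=1$, once more by the projection formula and Riemann--Roch on the curve $B$.

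The technical heart --- and the step I expect to cost the most care --- is this surface reduction: arranging that the general complete intersection $S$ is smooth and connected, that $\phi|_S$ retains connected fibres (Bertini connectedness), and that a vertical $\bQ$-divisor vanishing after restriction to $S$ already vanishes on $X$. Each of these is standard, but they must be assembled correctly; everything else --- nefness of the $E_j$, the two Riemann--Roch computations, and the reduction to smooth $X$ --- is routine.
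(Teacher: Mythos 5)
Your proof is correct, and in substance it proves more than the lemma asks: assuming $E$ nef, you show that (after resolving and Stein factorising) $E$ is the pull-back of an effective $\mathbb{Q}$-divisor of positive degree from the base curve $B$, whence $\kappa(E)=1$. The paper organises the argument differently, though the underlying tool is the same general complete-intersection surface together with the negative semi-definiteness of the intersection form on fibre components: it distinguishes the case where $\supp(E)$ is the whole set-theoretic fibre $f^{-1}(y)$, in which $\kappa(E)=1$ holds with no nefness needed, from the case where $\supp(E)$ is a proper subset, in which the restriction $E\cap S$ is an effective divisor supported on a proper part of a fibre of the surface fibration and hence not nef; neither the equality case of Zariski's lemma nor a Stein factorisation enters there. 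Your route costs more bookkeeping (the decomposition $E=\sum_j E_j$ with each $E_j$ nef, the equality case of Zariski's lemma, and descending $E_j|_S=c_j\,(\phi^*b_j)|_S$ from $S$ to $X$), but it buys a stronger structural statement and, importantly, it treats correctly the situation where $f$ has disconnected fibres and $\supp(E)$ is a union of entire connected components of $f^{-1}(y)$: there $E$ can perfectly well be nef (it is then a full fibre of $\phi$) while being supported on a proper subset of $f^{-1}(y)$, a configuration that the paper's two-case division, read literally, passes over.

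One justification in your write-up is not right as stated: the claim that a general hyperplane section of a \emph{connected} projective variety of dimension at least $2$ stays connected fails for connected but reducible schemes (two planes in $\mathbb{P}^4$ meeting in one point have disconnected general hyperplane sections), and the fibre $\phi^{-1}(b_j)$ is precisely the kind of reducible object you are cutting; the connectedness of $\phi^{-1}(b_j)\cap S$ is exactly what makes the kernel of the Zariski form one-dimensional, so it cannot be waved through. Fortunately the needed statement is true and easy to repair: after your reduction, $\phi^{-1}(b_j)$ is a Cartier divisor in the smooth variety $X$, hence Cohen--Macaulay, hence connected in codimension one, and that property is preserved by $n-2$ general very ample cuts (components remain irreducible by Bertini and their pairwise intersections along a connecting chain, having dimension $n-2$, remain non-empty); alternatively, the general fibre of $\phi|_S$ is irreducible (generic smoothness plus Bertini), so the Stein factorisation of $\phi|_S\colon S\to B$ is trivial and therefore every fibre of $\phi|_S$, in particular the one over $b_j$, is connected. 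With either repair your argument is complete.
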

\begin{proof}
  Write $y := f(E)$.  If $\supp(E)$ equals the set-theoretic fibre $f^{-1}(y)$,
  then $κ(E) = 1$.  We will therefore assume that $\supp(E)$ is a proper subset
  of $f^{-1}(y)$.  Choose a very ample line bundle $ℒ$ on $X$ and a general
  tuple of sections $(H_1, …, H_{\dim X-2}) ∈ |ℒ|^{⨯ (\dim X-\dim Y)}$.  Set
  $S := H_1 ∩ ⋯ ∩ H_{\dim X-2}$.  This is a normal surface in $X$.  The divisor
  $E ∩ S$ is a non-zero, effective, $ℚ$-Cartier $ℚ$-divisor on $S$, supported on
  a proper subset of $(f|_S)^{-1}(y)$.  Standard computations with intersection
  numbers will then show that $E ∩ S$ (and therefore $E$) are non-nef.
\end{proof}

\subsection{Abelian group schemes}
\approvals{Daniel & yes\\Stefan & yes\\ Thomas & yes}

The proof of our main result, the characterisation of quasi-Abelian varieties in
Theorem~\ref{thm:charQAbelian}, uses a minor generalisation of Kollár's
characterisation of étale quotients of Abelian group schemes,
\cite[Thm.~6.3]{Kollar93}.  We recall the relevant notions first.

\begin{defn}[\protect{Abelian group scheme, \cite[(6.2.1) on p.~197]{Kollar93}}]
  An \emph{Abelian group scheme over a proper base} is a smooth morphism
  $a: A → B$ between smooth and proper varieties such that every fibre of $a$ is
  an Abelian variety, and such that there exists a section $B → A$.
\end{defn}

\begin{defn}[\protect{Generically large fundamental group, \cite[Defn.~6.1]{Kollar93}}]
  Let $X$ be a normal, projective variety and let $Y ⊂ X$ be a closed
  subvariety.  We say that \emph{$X$ has generically large fundamental group on
    $Y$} if for all very general points $y ∈ Y$ and for every closed and
  positive-dimensional subvariety $y ∈ Z ⊂ Y$ with normalisation $\overline{Z}$,
  the image of the natural morphism $π_1\bigl(\overline{Z} \bigr) → π_1(X)$ is
  infinite.
\end{defn}

The generalisation of Kollár's result is then formulated as follows.

\begin{prop}[Characterisation of étale quotients of Abelian group schemes]\label{prop:K63}
  Let $f : X → Y$ be a surjective morphism with connected fibres between normal,
  projective varieties.  Assume that there exists $Δ ∈ ℚ\Div(X)$ such that
  $(X,Δ)$ is klt.  Let $y ∈ Y$ be a very general point with fibre $X_y$ and
  assume that the following holds.
  \begin{itemize}
  \item The fibre $X_y$ has a finite étale cover that is birational to an
    Abelian variety.
    
  \item The variety $X$ has generically large fundamental group on $X_y$.
  \end{itemize}
  Then, there exists an étale cover $γ_X : \what{X} → X$ such that the fibration
  $\what{a} : \what{A} → \what{Y}$ obtained as the Stein factorisation of
  $(f◦γ_X) : \what{X} → Y$ is birational to an Abelian group scheme over a
  proper base.
\end{prop}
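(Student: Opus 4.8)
The plan is to reduce the assertion to Kollár's original result, \cite[Thm.~6.3]{Kollar93}, which treats the case where $X$ is smooth, by passing to a resolution. First I would choose a log resolution $π : \wtilde{X} → X$ of the pair $(X, Δ)$ and, after resolving the indeterminacy locus of $\wtilde{X} \dasharrow Y$ and blowing up the base, arrange a resolution $q : \wtilde{Y} → Y$ and a lift $\wtilde{f} : \wtilde{X} → \wtilde{Y}$ of $f$; replacing $\wtilde{Y}$ by the intermediate variety of a Stein factorisation if needed, we may assume $\wtilde{f}$ has connected fibres. Since $(X, Δ)$ is klt with $Δ$ effective, the variety $X$ is itself klt, hence has rational singularities, and the fibres of $π$ lying over points of $X$ are rationally connected (Hacon--McKernan, Takayama); by Takayama's work \cite{Takayama2003} --- compare the discussion surrounding Proposition~\ref{prop:takayama} --- the morphism $π$ therefore induces an isomorphism $π_1(\wtilde{X}) ≅ π_1(X)$. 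Consequently, normalisation of $X$ in the appropriate function fields sets up a bijection between finite étale covers of $\wtilde{X}$ and of $X$, compatible with $π$.

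Next I would check that $\wtilde{f}$ inherits the two fibre hypotheses of Kollár's theorem. For a very general point $y ∈ Y$ the fibre $X_y$ is klt, and $π$ restricts to an isomorphism over a dense open subset of $X_y$, so $\wtilde{X}$ is smooth along the corresponding fibre $\wtilde{X}_{\wtilde{y}}$, which is birational to $X_y$. Pulling the given finite étale cover of $X_y$ --- the one birational to an Abelian variety --- back along this birational correspondence and passing to the normalisation of a dominating component shows that $\wtilde{X}_{\wtilde{y}}$ again admits a finite étale cover birational to an Abelian variety. In the same way, the statement that $X$ has generically large fundamental group on $X_y$ translates into the statement that $\wtilde{X}$ has generically large fundamental group on $\wtilde{X}_{\wtilde{y}}$, using the isomorphism $π_1(\wtilde{X}) ≅ π_1(X)$ together with the bijection between positive-dimensional subvarieties through the very general points of $X_y$ and of $\wtilde{X}_{\wtilde{y}}$.

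At this point \cite[Thm.~6.3]{Kollar93} applies to $\wtilde{f}$ and produces a finite étale cover $γ : \wtilde{W} → \wtilde{X}$ for which the Stein factorisation $\wtilde{a} : \wtilde{A} → \wtilde{Y}'$ of $\wtilde{f} ◦ γ$ is birational to an Abelian group scheme over a proper base. Using the first step, I would descend $γ$ to a finite étale cover $γ_X : \what{X} → X$ with $\wtilde{W}$ birational to $\what{X}$ over $X$. Because the intermediate variety of a Stein factorisation is determined, up to birational equivalence, by the algebraic closure of the base function field inside the function field of the total space --- and $\wtilde{Y}$ and $Y$ share the latter --- the Stein factorisation $\what{a} : \what{A} → \what{Y}$ of $f ◦ γ_X$ is birational to $\wtilde{a} : \wtilde{A} → \wtilde{Y}'$, hence birational to an Abelian group scheme over a proper base. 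This is the claim.

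The part I expect to be the main obstacle is precisely the descent-and-matching in the last paragraph: ensuring that the étale cover constructed over the resolution $\wtilde{X}$ genuinely comes from $X$ --- this is where klt-ness is essential, via Takayama's comparison of fundamental groups --- and that the formation of Stein factorisations is compatible with the intervening birational modifications, so that the Abelian-group-scheme structure obtained on the resolution is not lost upon returning to $X$. A secondary technical point is the verification that the two fibre hypotheses of Kollár's theorem are insensitive to replacing $X_y$ by the resolution fibre $\wtilde{X}_{\wtilde{y}}$: both conditions are birational in nature, but the possible singularities of $X_y$ make the transfer of the condition ``admits a finite étale cover birational to an Abelian variety'' a little delicate.
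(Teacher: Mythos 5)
Your proposal is correct and follows essentially the same route as the paper's proof: resolve $X$ and $Y$, use Takayama's theorem to identify $\pi_1(\wtilde{X})$ with $\pi_1(X)$, transfer the two fibre hypotheses to the resolution, apply Koll\'ar's Theorem~6.3 there, descend the resulting \'etale cover to $X$, and match the Stein factorisations. The only cosmetic differences are that the paper additionally applies Takayama to the klt fibre $\bigl(X_y, \Delta|_{X_y}\bigr)$ to compare fundamental groups of the fibres, and checks compatibility of the Stein factorisations by an explicit diagram argument (showing the induced rational map to the Stein base is a morphism) rather than via the function-field characterisation you use.
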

\begin{proof}
  If $X$ and $Y$ are smooth, this is \cite[Thm.~6.3]{MR1341589}.  If not, choose
  $Δ ∈ ℚ\Div(X)$ such that $(X,Δ)$ is klt.  We will consider resolutions of the
  singularities and construct a diagram of surjective morphisms between normal,
  projective varieties,
  \begin{equation}\label{eq:D2a}
    \begin{tikzcd}[column sep=2.5cm]
      \wcheck{X} \ar[r, "γ_{\wtilde{X}}\text{, finite and étale}"] \ar[d, "\wcheck{a} \text{, conn.~fibres}"'] & \wtilde{X} \ar[r, "π_X \text{, resolution}"] \ar[d, "\wtilde{f} \text{, conn.~fibres}"] & X \ar[d, "f \text{, conn.~fibres}"] \\
      \wcheck{Y} \ar[r, "γ_{\wtilde{Y}}\text{, finite}"'] & \wtilde{Y} \ar[r, "π_Y \text{, resolution}"'] & Y,
    \end{tikzcd}
  \end{equation}
  where the morphism $\wcheck{a}$ is birational to an Abelian group scheme over
  a proper base.  For this, begin by choosing appropriate resolution morphisms
  $π_X$ and $π_Y$ to construct the right square in \eqref{eq:D2a}.  Once this is
  done, choose a very general point $\wtilde{y} ∈ \wtilde{Y}$ with very general
  image point $y := π_Y(\wtilde{y})$ and observe that
  $\bigl( X_y, Δ|_{X_y}\bigr)$ is klt to infer from
  \cite[Thm.~1.1]{Takayama2003} that the natural morphisms
  \[
    (π_X)_*: π_1\bigl(\wtilde{X}\,\bigr) → π_1(X) \quad\text{and}\quad \bigl(π_X|_{\wtilde{X}_{\wtilde{y}}}\bigr)_*: π_1\Bigl(\wtilde{X}_{\wtilde{y}}\Bigr) → π_1(X_y)
  \]
  are isomorphic.  Kollár's result therefore applies to $\wtilde{f}$ and gives
  the left square.

  There is more: Takayama's result also shows that the étale morphism
  $γ_{\wtilde{X}}$ comes from an étale cover of $X$, say $γ_X : \what{X} → X$.
  Stein factorising $π_X◦γ_{\wtilde{X}}$ and $π_Y◦γ_{\wtilde{Y}}$, we will
  therefore obtain a diagram of the following form,
  \begin{equation}\label{eq:D2b}
    \begin{tikzcd}[column sep=2.5cm]
      \wcheck{X} \ar[rr, "π_X◦γ_{\wtilde{X}}", bend left=15] \ar[r, "π_{\what{X}}\text{, resolution}"'] \ar[d, "\wcheck{a} \text{, conn.~fibres}"'] & \what{X} \ar[r, "γ_X \text{, étale}"'] \ar[d, "\what{a}", dashed] & X \ar[d, "f \text{, conn.~fibres}"] \\
      \wcheck{Y} \ar[rr, "π_Y◦γ_{\wtilde{Y}}"', bend right=15] \ar[r, "π_{\wtilde{Y}}\text{, conn.~fibres}"] & \what{Y} \ar[r, "γ_Y \text{, finite}"] & Y,
    \end{tikzcd}
  \end{equation}
  where $\what{a} := π_{\wtilde{Y}}◦\wcheck{a}◦π^{-1}_{\what{X}}$.  This map
  makes \eqref{eq:D2b} commute and is birational to $\wcheck{a}$, which is in
  turn birational to an Abelian group scheme.  To end the proof, it will
  therefore suffice to show that $\what{a}$ is actually a morphism.  This is not
  so hard: if $\what{x} ∈ \what{X}$ is any point with preimage
  $Z := π^{-1}_{\what{X}} \bigl( \what{x}\, \bigr)$, then
  \[
    \bigl( γ_Y◦ π_{\wtilde{Y}} ◦ \wcheck{a}\,\bigr)(Z) =
    f\bigl(γ_X(\what{x}\,)\bigr) = \text{point in } Y
  \]
  but since $γ_Y$ is finite, this implies that
  $\bigl( π_{\wtilde{Y}} ◦ \wcheck{a}\,\bigr)(Z)$ is a point in $\what{Y}$ and
  hence that $\what{a}$ is well-defined at $\what{x}$.
\end{proof}

%
% Do not edit the following line.  The text is automatically updated by
% subversion.
%
\svnid{$Id: 03-projectiveFlatness.tex 632 2020-09-23 09:24:31Z kebekus $}

\section{Projective flatness}
\subversionInfo

\subsection{Projectively flat bundles and sheaves}
\approvals{Daniel & yes \\Stefan & yes\\ Thomas & yes}

\emph{Projective flatness} is the core notion of this paper.  We will only
recall the most relevant definition here and refer the reader to
\cite[Sect.~3]{GKP20a} for a detailed discussion of projectively flat bundles
and sheaves.

\begin{defn}[Projectively flat bundles and sheaves on complex spaces]\label{def:3-2}
  Let $X$ be a normal and irreducible complex space, let $r ∈ ℕ$ be any number
  and let $ℙ → X$ be a locally trivial $ℙ^r$-bundle.  We call the bundle $ℙ → X$
  \emph{(holomorphically) projectively flat} if there exists a representation of
  the fundamental group, $ρ : π_1(X) → ℙ\negthinspace\GL(r+1,ℂ)$, and an
  isomorphism of complex spaces over $X$
  \[
    ℙ ≅_X \factor{\wtilde{X} ⨯ ℙ^r}{π_1(X)},
  \]
  where $\wtilde{X}$ is the universal cover of $X$ and where the action
  $π_1(X) ↺ \wtilde{X} ⨯ ℙ^r$ is the diagonal action.  A locally free sheaf $ℱ$
  of $𝒪_X$-modules is called \emph{(holomorphically) projectively flat} if the
  associated bundle $ℙ(ℱ)$ is projectively flat.
\end{defn}

\begin{defn}[Projectively flat bundles and sheaves on complex varieties]
  Let $X$ be a connected, complex, quasi-projective variety and let $r ∈ ℕ$ be
  any number.  An étale locally trivial $ℙ^r$-bundle $ℙ → X$ is called
  \emph{projectively flat} if the associated analytic bundle
  $ℙ^{(an)} → X^{(an)}$ is projectively flat.  Ditto for coherent sheaves.
\end{defn}

\subsection{Projectively flat bundles induced by a solvable representation}
\approvals{Daniel & yes \\Stefan & yes\\ Thomas & yes}

One way to analyse a projectively flat sheaf $ℰ$ on a manifold or variety $X$ is
to look at the Albanese map $X → \Alb(X)$, and to try and understand the
restriction $ℰ|_F$ to a general fibre.  The key insight here is that the
Albanese map is a Shafarevich map for the commutator subgroup,
\cite[Prop.~4.3]{Kollar93}.  This allows to relate the geometry of the Albanese
map to the group theory of the representation that defines the projectively flat
structure on $ℰ$.  The following proposition, which is a variation of material
that appears in \cite{MR3030068}, is key to our analysis.

\begin{prop}[Projectively flat bundles induced by a solvable representation]\label{prop:pfbisr}
  Let $X$ be a normal, irreducible complex space that is maximally quasi-étale.
  Let $ℱ$ be a rank-{}$n$ reflexive sheaf on $X$ such that $ℱ|_{X_{\reg}}$ is
  locally free and projectively flat, so that $ℙ(ℱ|_{X_{\reg}})$ is isomorphic
  to a projective bundle $ℙ_ρ$, for a representation
  $ρ : π_1(X_{\reg}) → \PGL(n,ℂ)$.  If the image of $ρ$ is contained in a
  connected, solvable, algebraic subgroup $G ⊂ \PGL(n,ℂ)$, then there exists an
  isomorphism
  \begin{equation}\label{eq:OmegaFA0-1}
    ℱ ≅ ℱ' ⊗ 𝒜.
  \end{equation}
  where $𝒜$ is Weil divisorial\footnote{Weil divisorial sheaf = reflexive sheaf
    of rank one} and where $ℱ'$ is locally free and admits a filtration
  \begin{equation}\label{eq:3-3-2}
    0 = ℱ'_0 ⊊ ℱ'_1 ⊊ ⋯ ⊊ ℱ'_n = ℱ'
  \end{equation}
  such that the following holds.
  \begin{enumerate}
  \item\label{il:x1} All sheaves $ℱ_i$ are locally free and flat in the sense of
    Definition~\ref{defn:flat}, with $\rank (ℱ_i)$ equal to $i$.
    
  \item\label{il:x2} The sheaf $ℱ_1$ is trivial, $ℱ_1 ≅ 𝒪_X$.
  \end{enumerate}
\end{prop}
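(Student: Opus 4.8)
The plan is to exploit the classical structure theory of connected solvable algebraic groups: by the Lie--Kolchin theorem, a connected solvable subgroup $G \subset \GL(n,\bC)$ is conjugate into the upper-triangular Borel, and hence the corresponding image in $\PGL(n,\bC)$ stabilises a full flag in $\bC^n$. First I would lift the representation $\rho : \pi_1(X_{\reg}) \to \PGL(n,\bC)$ situation-appropriately: the projective bundle $\bP(\sF|_{X_{\reg}}) \cong \bP_\rho$ carries the tautological flag of sub-bundles coming from the $G$-invariant flag in $\bC^n$, each of which is automatically flat on $X_{\reg}$ in the sense of Definition \ref{defn:flat} (it is defined by the restriction of $\rho$ to a subrepresentation, after passing to the associated affine cone). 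Pulling these flat sub-bundles back to $\sF|_{X_{\reg}}$ via the relation $\bP(\sF|_{X_{\reg}}) = \bP_\rho$, I obtain a filtration $0 = \sG_0 \subsetneq \sG_1 \subsetneq \dots \subsetneq \sG_n = \sF|_{X_{\reg}}$ by sub-bundles on $X_{\reg}$, where each $\sG_i/\sG_{i-1}$ is a line bundle and where each $\sG_i$ is \emph{projectively} flat but only determined up to twist by a line bundle, since the identification $\bP(\sF) = \bP_\rho$ only remembers the projectivised data.

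The next step is to pin down the twist. Set $\sA := \sG_1$ viewed back on $X_{\reg}$; this is an invertible sheaf, and I would extend it to a reflexive rank-one (hence $\bQ$-Cartier, using that $X$ is klt and, via the maximally-quasi-étale hypothesis together with \cite[Thm.~1.14]{GKP13}-type input, that Weil divisorial sheaves behave well) sheaf $\sA$ on $X$. Tensoring through, I replace $\sF$ by $\sF' := \sF \otimes \sA^{[-1]}$, which on $X_{\reg}$ now has $\sF'_1 \cong \sO_{X_{\reg}}$ as the bottom step of its induced filtration; this gives \eqref{eq:3-3-2} and item \eqref{il:x2} over $X_{\reg}$. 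Item \eqref{il:x1} — that each $\sF'_i$ is genuinely \emph{flat}, not merely projectively flat — follows because, having trivialised the determinant of the bottom line bundle, the obstruction to upgrading the projective representation $\rho|_{\text{Borel}}$ to a linear one lands in $H^2(\pi_1(X_{\reg}), \bC^*)$, and by Lemma \ref{lem:c1offlat} (or rather its proof, tracking the first Chern class of each graded piece) the relevant classes vanish; the successive extensions then assemble each $\sF'_i$ from flat line bundles via flat extensions, keeping it flat by Remark \ref{rem:2-3}. I would carry this out inductively on $i$, using that the quotient $\sF'/\sG_1$ inherits a projectively flat structure from a solvable subquotient representation and invoking the inductive hypothesis.

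Finally, I need to push everything from $X_{\reg}$ across the singular locus to get the stated isomorphism on all of $X$. Here I would take reflexive hulls: $\sF$ is reflexive by hypothesis, $\sA$ is reflexive by construction, so $\sF' \otimes \sA$ and $\sF$ agree on $X_{\reg}$ and both are reflexive, hence equal on $X$ — giving \eqref{eq:OmegaFA0-1}. For $\sF'$ itself and its filtration, the point is that a flat locally free sheaf on $X_{\reg}$ extends to a flat locally free sheaf on $X$ precisely when the defining representation of $\pi_1(X_{\reg})$ factors through $\pi_1(X)$; this is where the \emph{maximally quasi-étale} hypothesis does its job, since it forces $\widehat{\pi}_1(X_{\reg}) \cong \widehat{\pi}_1(X)$ and (after the standard reduction) lets one extend the flat bundles and the filtration across $X_{\sing}$ as locally free sheaves, keeping local freeness everywhere. \textbf{The main obstacle} I anticipate is precisely this last extension step combined with the twist-bookkeeping: showing that the individual flat pieces $\sF'_i$ — and not just the reflexive sheaf $\sF'$ — extend as \emph{locally free} flat sheaves across the singularities, and that the choice of the twisting divisorial sheaf $\sA$ can be made globally and compatibly with the filtration rather than only locally on $X_{\reg}$. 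Controlling the $H^1$- and $H^2$-obstructions for $\pi_1(X_{\reg})$ versus $\pi_1(X)$, and matching them up with the vanishing of Chern classes from Lemma \ref{lem:c1offlat}, is the technical heart of the argument.
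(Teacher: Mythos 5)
There is a genuine gap at the heart of the argument, namely in the step that upgrades the \emph{projectively} flat structure to an honest flat structure after a rank-one twist. You construct the Lie--Kolchin flag $\sG_1\subsetneq\dots\subsetneq\sF|_{X_{\reg}}$, twist down by $\sG_1$, and then assert that the obstruction to lifting $\rho$ to a linear representation, a class in $H^2\bigl(\pi_1(X_{\reg}),\bC^*\bigr)$, vanishes ``by Lemma~\ref{lem:c1offlat}, tracking the first Chern class of each graded piece''. That lemma states that flat sheaves have $c_1=0$; it says nothing about the group-cohomological lifting obstruction, and vanishing of topological Chern classes of the graded pieces does not kill a class in $H^2\bigl(\pi_1(X_{\reg}),\bC^*\bigr)$. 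Worse, the assertion that $\sF|_{X_{\reg}}\otimes\sG_1^{-1}$ is \emph{flat} (not merely projectively flat with trivial bottom piece) is essentially equivalent to the existence of the normalised linear lift, so as written the argument assumes what it must prove. The auxiliary inductive step is also false in general: an extension of flat sheaves by flat sheaves need not be flat, since the extension class has to come from group cohomology, i.e.\ lie in the image of $H^1(\pi_1(X_{\reg}),\cdot)\to H^1(X_{\reg},\cdot)$, which is a nontrivial condition in dimension at least two.

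The missing idea is a purely group-theoretic splitting, which is how the paper proceeds. Writing $\pi:\GL(n,\bC)\to\PGL(n,\bC)$, the maximal connected subgroup $\what{G}'$ of $\pi^{-1}(G)\cap\SL(n,\bC)$ surjects onto $G$ with an isomorphism of Lie algebras, hence is solvable, and Lie's theorem places it, after a choice of basis, inside the upper-triangular group $B(n,\bC)$. Setting $\what{G}:=\pi^{-1}(G)\cap B_1(n,\bC)$, where $B_1(n,\bC)$ consists of upper-triangular matrices with $(1,1)$-entry equal to $1$, the projection $\pi|_{\what{G}}:\what{G}\to G$ is an \emph{isomorphism}, because every invertible upper-triangular matrix has a unique scalar multiple lying in $B_1(n,\bC)$. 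Composing $\rho$ with its inverse yields the linear representation $\sigma$ with values in $B_1(n,\bC)$, and this one construction simultaneously delivers everything you were after: the flat sheaf $\sF'$, its filtration by the flat coordinate flag, the triviality of $\sF'_1$ (forced by the normalised $(1,1)$-entry), and the invertible twist $\sA_{\reg}$ with $\sF|_{X_{\reg}}\cong\sF'|_{X_{\reg}}\otimes\sA_{\reg}$, after which one sets $\sA:=\iota_*\sA_{\reg}$ and concludes by reflexivity. Your final step is the right mechanism and agrees with the paper: the maximally quasi-étale hypothesis lets linear representations of $\pi_1(X_{\reg})$ factor through $\pi_1(X)$, so $\sF'$ and its filtration exist as flat, locally free sheaves on all of $X$. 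One small correction: klt is not assumed in this proposition, and $\sA$ need not (and is not claimed to) be $\bQ$-Cartier.
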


\begin{rem}\label{rem:pfbisr}
  In the setting of Proposition~\ref{prop:pfbisr}, note the following
  \begin{enumerate}
  \item There exists an isomorphism of Weil divisorial sheaves,
    $\det ℱ ≅ 𝒜^{[⊗ n]} ⊗ \det ℱ'$.  Recall from Remark~\ref{rem:2-3} that the
    factor $\det ℱ'$ is invertible and numerically trivial.
    
  \item There exists an injection $𝒜 ≅ 𝒜 ⊗ ℱ'_1 ↪ ℱ$.
  \end{enumerate}
\end{rem}

\begin{proof}[Proof of Proposition~\ref{prop:pfbisr}]
  To begin, we claim that the projective representation $ρ$ lifts to a linear
  representation, which we will denote by $σ$.  To formulate our result more
  precisely, write $B(n,ℂ) ⊂ \GL(n,ℂ)$ for the subgroup of upper-triangular
  matrices and $B_1(n,ℂ) ⊂ B(n,ℂ)$ for the subgroup of upper-triangular matrices
  whose top-left entry equals one.  In other words,
  \[
    B_1(n,ℂ) := \left\{ \left.  \:\:
      \begin{pNiceMatrix}[nullify-dots]
        1 & * & \Cdots & & * \\
        0 & * & \Ddots & & \Vdots \\
        \Vdots & \Ddots & \Ddots & & \\
               & & & & * \\
        0 & \Cdots & & 0 & *
      \end{pNiceMatrix}
      \:\:\right|\:\: \text{all } * ∈ ℂ
    \right\} = \Bigl\{ (b_{••}) ∈ B(n, ℂ) \mid b_{11}=1\Bigr\}.
  \]
  With this notation, we claim there exists a connected linear group
  $\what{G} ⊆ B_1(n,ℂ) ⊂ \GL(n,ℂ)$ such that the natural projection
  $π : \GL(n,ℂ) → \PGL(n,ℂ)$ induces an isomorphism between $\what{G}$ and $G$.
  To this end, let $\what{G}' ⊂ \SL(n, ℂ)$ be the maximal connected subgroup of
  $π^{-1}(G) ∩ \SL(n, ℂ)$.  The restricted morphism
  $π|_{\what{G}'} : \what{G}' → G$ is then finite and surjective, and induces an
  isomorphism of the Lie algebras $\Lie(\what{G}')$ and $\Lie(G)$.  In
  particular, $\what{G}'$ is solvable.  Therefore, choosing an appropriate basis
  of $ℂ^{n}$, Lie's Theorem allows to assume without loss of generality that
  $\what{G}'$ is contained in the subgroup $B(n,ℂ)$ of upper-triangular
  matrices.  Finally, set $\what{G} := π^{-1}(G) ∩ B_1(n, ℂ)$.  The obvious fact
  that any invertible upper triangular matrix in
  $A ∈ \what{G}' ⊆ B(n,ℂ)$ has a unique multiple $λ·A$ that lies in
  $B_1(n, ℂ)$ implies that the restriction $π|_{\what{G}} : \what{G} → G$ is
  then isomorphic, as desired.  The following commutative diagram summarises the
  situation,
  \[
    \begin{tikzcd}[column sep=large]
      & \what{G} \arrow[r, hook] \ar[d, "≅"] & B_1(n,ℂ) \arrow[r, hook] & \GL(n,ℂ) \arrow[d, "π\text{, nat.~projection}"] \\
      π_1\Bigl(X^{(an)}\Bigr) \arrow[r, "ρ"'] \arrow[ur, "σ", bend left=20] & G \arrow[rr, hook] & & \PGL(n,ℂ).
    \end{tikzcd}
  \]
  
  If $ℱ'$ denotes the locally free, flat sheaf on $X$ associated with $σ$, then
  the factorisation of $σ$ via the group $B_1(n, ℂ)$ implies that $ℱ'$ admits
  a filtration as in \eqref{eq:3-3-2} that satisfies conditions \ref{il:x1} and
  \ref{il:x2}.  By construction, we have have
  $ℙ(ℱ|_{X_{\reg}}) ≅ ℙ(ℱ'|_{X_{\reg}})$, and so there exists an invertible
  sheaf $𝒜_{\reg}$ on $X_{\reg}$ such that
  $ℱ|_{X_{\reg}} ≅ ℱ'|_{X_{\reg}} ⊗ 𝒜_{\reg}$.  A presentation of $ℱ$ as in
  Equation~\eqref{eq:OmegaFA0-1} therefore exists by taking
  $𝒜 := ι_* 𝒜_{\reg}$, where $ι : X_{\reg} → X$ is the inclusion.
\end{proof}

%
% Do not edit the following line.  The text is automatically updated by
% subversion.
%
\svnid{$Id: 04-splitting.tex 628 2020-09-23 08:00:35Z peternell $}

\section{Varieties with splitting cotangent sheaves}
\subversionInfo
\approvals{Daniel & yes \\Stefan & yes\\ Thomas & yes}

One relevant case that keeps appearing in the discussion of varieties with
projectively flat sheaf of reflexive differentials is the one where $Ω^{[1]}_X$
decomposes as a direct sum of Weil divisorial sheaves.  We discuss settings
where there are local or global decompositions of this form.  The results of
this section will be used later in the proof of our main result, but might be of
independent interest.  In relation to \cite{MR3030068}, the local results are
new, prompted by the appearance of singularities in our context, and the results
on varieties with globally split cotangent sheaf allow us to replace some of the
more involved arguments of \emph{loc.~cit.} by a clearer structure result.

\subsection{Varieties where $Ω^{[1]}_X$ is locally split}
\label{sect:split}
\approvals{Daniel & yes \\Stefan & yes\\ Thomas & yes}

Given a variety where $Ω^{[1]}_X$ is projectively flat, for instance because the
flatness criterion of \cite[Thm.~1.6]{GKP20a} applies, the local description of
projectively flat sheaves, \cite[Prop.~3.10]{GKP20a}, can often be used to
reduce to the case where $Ω^{[1]}_X$ decomposes locally.  We will show that this
already forces $X$ to have quotient singularities.

\begin{prop}\label{prop:icq}
  Let $X$ be a normal, irreducible complex space with klt singularities.  Assume
  that the sheaf $Ω^{[1]}_X$ of reflexive differentials is of the form
  \begin{equation}\label{eq:icq}
    Ω^{[1]}_X ≅ ℒ^{⊕ \dim X},
  \end{equation}
  where $ℒ$ is reflexive of rank one.  Then, $X$ has only isolated, cyclic
  quotient singularities.
\end{prop}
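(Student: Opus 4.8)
The plan is to analyse the situation locally near a fixed singular point $x \in X_{\mathrm{sing}}$, to pass there to a cyclic cover that trivialises $ℒ$, to conclude from the Lipman–Zariski conjecture that this cover is smooth, and finally to recover the local structure of $X$ by translating the hypothesis $Ω^{[1]}_X ≅ ℒ^{⊕n}$ (with $n := \dim X$) into finite–group representation theory. For the preliminary reductions: from $\det Ω^{[1]}_X ≅ ℒ^{[⊗ n]}$ one gets $𝒪_X(K_X) ≅ ℒ^{[⊗ n]}$, so $n·[ℒ] = [K_X]$ in the divisor class group; since $X$ is klt, $K_X$ is $ℚ$-Cartier, hence so is $ℒ$, and some $d ≥ 1$ satisfies $dn·ℒ \sim 0$ locally near any point. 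Shrinking $X$ to a Stein neighbourhood of $x$, we may therefore assume $ℒ^{[⊗ m]} ≅ 𝒪_X$, where $m$ is the (now finite) order of $[ℒ]$.

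Let $π : Y → X$ be the cyclic cover of degree $m$ associated with $ℒ$ and a chosen isomorphism $ℒ^{[⊗ m]} ≅ 𝒪_X$. Then $Y$ is normal and irreducible, the morphism $π$ is finite and quasi-étale — it is étale over $X_{\reg}$, the defining section being nowhere zero, so its branch locus lies in $X_{\sing}$ and has codimension $≥ 2$ — the space $Y$ is again klt, and $π^{[*]}ℒ ≅ 𝒪_Y$. Since reflexive differentials pull back under quasi-étale morphisms,
\[
  Ω^{[1]}_Y ≅ π^{[*]}Ω^{[1]}_X ≅ \bigl( π^{[*]}ℒ \bigr)^{⊕ n} ≅ 𝒪_Y^{⊕ n};
\]
in particular the tangent sheaf $\bigl( Ω^{[1]}_Y \bigr)^{*}$ is locally free, so the Lipman–Zariski conjecture, which holds for klt spaces, forces $Y$ to be smooth. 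Fixing $y ∈ π^{-1}(x)$ with (necessarily cyclic) stabiliser $H ≤ ℤ/m$ and linearising the $H$-action at $y$ by Cartan's lemma, we obtain an $H$-equivariant identification of a neighbourhood of $y$ with an $H$-invariant ball $B ⊆ ℂ^n$ on which $H ≤ \GL(n,ℂ)$ acts linearly and — because $π$ is étale in codimension one — without pseudo-reflections, as well as an identification of a neighbourhood of $x$ with the quotient $B/H$.

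Under the equivalence between reflexive coherent sheaves on $B/H$ and $H$-equivariant reflexive sheaves on $B$, the sheaf $Ω^{[1]}_X$ corresponds to $Ω^1_B$ with its tautological $H$-action, while $ℒ$, being reflexive of rank one on the factorial space $B$, corresponds to $𝒪_B$ together with an $H$-linearisation, that is, with a character $χ ∈ \what{H}$. Hence the isomorphism $Ω^{[1]}_X ≅ ℒ^{⊕ n}$ becomes an $H$-equivariant isomorphism $Ω^1_B ≅ (𝒪_B,χ)^{⊕ n}$; evaluating at the fixed point $0 ∈ B$ shows that $H$ acts on $(ℂ^n)^{*}$, hence on $ℂ^n$, through a single character — that is, by scalars. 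Since $x$ is singular we have $H ≠ \{1\}$, so $H ≅ μ_{|H|}$ acting on $ℂ^n$ by scalar multiplication and $B/H$ is an open neighbourhood of the vertex of the cone $ℂ^n/μ_{|H|}$, whose singular locus reduces to the origin. Therefore $x$ is an isolated cyclic quotient singularity, and since $x ∈ X_{\sing}$ was arbitrary, the proposition follows. The main obstacle here is foundational rather than computational: one must have the Lipman–Zariski conjecture at one's disposal for klt \emph{complex spaces} (not merely algebraic varieties), together with the standard but non-trivial facts that a cyclic cover of a klt space branched in codimension $≥ 2$ is again normal and klt and that reflexive differentials are compatible with quasi-étale pull-back; granting these, the representation-theoretic endgame is routine, the only delicate point being the precise form of the sheaf–module dictionary under $B → B/H$ as applied to $Ω^{[1]}_X$ and to $ℒ$.
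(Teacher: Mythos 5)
Your proposal is correct, and its first half is essentially the paper's argument: reduce to a local statement, observe that $ℒ^{[⊗ n]} ≅ ω_X$ makes $ℒ$ $ℚ$-Cartier and locally torsion, pass to the associated local cyclic (index-one) cover, note that the reflexive pull-back of $Ω^{[1]}_X$ becomes free there, and invoke the Lipman--Zariski conjecture for klt spaces (in the analytic category) to conclude the cover is smooth, whence cyclic quotient singularities. Where you genuinely diverge is in the proof of isolatedness. The paper argues by contradiction: if the singular locus had a positive-dimensional component, Lemma~\ref{lem:glcs} (Cartan linearisation along the stratum) would give a local product decomposition $U = W ⨯ V$ with $V$ an isolated quotient singularity, and then the direct summand $\bigl(Ω^{[1]}_W\bigr)^{[⊗ (N-1)]} ⊗ Ω^{[1]}_V$ of $\bigl(Ω^{[1]}_U\bigr)^{[⊗ N]}$ would fail to be locally free, contradicting the local freeness of $ℒ^{[⊗ N]}$ established at the outset. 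You instead linearise the stabiliser $H$ of a point of the smooth cover and read off the hypothesis equivariantly: evaluating the $H$-equivariant isomorphism $Ω¹_B ≅ \bigl(q^{[*]}ℒ\bigr)^{⊕ n}$ at the fixed point shows that $H$ acts on the tangent space through a single character, i.e.\ by scalars, so the quotient is singular only at the image of the origin. This is a valid alternative: it bypasses Lemma~\ref{lem:glcs} entirely and yields the sharper local statement that the singularities are scalar cyclic quotients, at the price of invoking the equivariant dictionary for quasi-étale quotients (reflexive sheaves on $B/H$ versus $H$-equivariant sheaves on $B$) together with the faithfulness of the stabiliser action, which you should make explicit (it follows from connectedness of the cover and the identity principle); the paper's route stays entirely at the level of local freeness of reflexive tensor powers and needs no equivariant sheaf theory. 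The foundational inputs you flag (normality and kltness of the cyclic cover, reflexive pull-back of $Ω^{[1]}$ under quasi-étale maps, Druel's analytic Lipman--Zariski) are exactly those the paper also uses.
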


We prove Proposition~\ref{prop:icq} in Section~\vref{ssec:popicq}.

\begin{rem}
  If $X$ is a quasi-projective variety with klt singularities where $Ω^{[1]}_X$
  is of the form $ℒ^{⊕ \dim X}$ for a Weil-divisorial sheaf $ℒ$, then the proof
  of Proposition~\ref{prop:icq} applies verbatim to show that $X$ is
  Zariski-locally a quotient of a smooth variety.  We do not use this fact
  however.
\end{rem}

\subsection{Varieties where $Ω^{[1]}_X$ is globally split}
\approvals{Daniel & yes \\Stefan & yes\\ Thomas & yes}

There are settings where $Ω^{[1]}_X$ decomposes globally, for instance because
it is projectively flat and $X$ is known to be simply connected.  This obviously
puts strong conditions on the global geometry of $X$.  Two of these are
described here.

\begin{thm}\label{split}
  Let $X$ be a normal projective klt variety.  Assume that the sheaf $Ω^{[1]}_X$
  of reflexive differentials is of the form
  \begin{equation}\label{eq:decomposition_of_cotangent}
    Ω^{[1]}_X ≅ ℒ^{⊕ n}
  \end{equation}
  where $ℒ$ is Weil divisorial.  Then, $ℒ^{[⊗ n]} ≅ ω_X$, and $ℒ$ is in
  particular $ℚ$-Cartier.  If $ℒ$ is pseudo-effective\footnote{As usual, we call
    $ℒ$ pseudo-effective if some locally free power $ℒ^{[⊗ m]}$ is
    pseudo-effective}, then $K_X \sim_ℚ 0$.
\end{thm}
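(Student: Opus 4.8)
The plan is to read off the first assertion from a determinant computation and then, assuming $\sL$ pseudo-effective, to combine Bogomolov--Sommese vanishing with the minimal model program and an analysis of the Iitaka fibration of $\sL$ in order to force $K_X$ to be numerically trivial, whence abundance for klt varieties with numerically trivial canonical class yields $K_X\sim_{\bQ}0$. For the first assertion, taking reflexive determinants in \eqref{eq:decomposition_of_cotangent} gives $\omega_X=\det\Omega^{[1]}_X\cong\det(\sL^{\oplus n})\cong\sL^{[\otimes n]}$; as $X$ is klt, $\omega_X$ is $\bQ$-Cartier, and since $X$ is normal this forces $\sL$ to be $\bQ$-Cartier (alternatively, invoke Proposition~\ref{prop:icq}, which already shows that $X$ has quotient, hence $\bQ$-factorial, singularities). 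Write $\sL=\sO_X(L)$ for a $\bQ$-Cartier Weil divisor $L$, so that $K_X\sim_{\bQ}nL$, and assume from now on that $L$ is pseudo-effective, so that $K_X$ is pseudo-effective.

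First I would reduce to the case where $K_X$ is nef. Run a $K_X$-MMP; it terminates, $K_X$ being pseudo-effective. None of its steps is a Mori fibre space contraction $c\colon X\to Z$: a general fibre $G$ of such a contraction would be a klt Fano variety, yet $\Omega^{[1]}_G$ is the reflexive hull of a quotient of $\Omega^{[1]}_X|_G\cong(\sL|_G)^{\oplus n}$, so $K_G$ is of the shape $(\sL|_G)^{[\otimes\dim G]}$ twisted by an effective divisor; as $G$ is a general member of a covering family, $\sL|_G$ is pseudo-effective, hence so is $K_G$, contradicting ampleness of $-K_G$. Every step being birational, and reflexive differentials being compatible with push-forward along birational morphisms of klt spaces and with the codimension-one identification across flips, the resulting minimal model $X'$ again satisfies $\Omega^{[1]}_{X'}\cong(\sL')^{\oplus n}$ with $\sL'=\sO_{X'}(L')$ pseudo-effective and $K_{X'}$ nef. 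Granting the theorem for $X'$, the exceptional divisors of $X\dasharrow X'$ are forced to vanish because $X'$ will turn out to be birational to an Abelian variety, which admits no divisor contracted by a codimension-one identification; so it suffices to treat the case $K_X$ nef.

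Bogomolov--Sommese vanishing on klt spaces, applied to the direct-summand inclusion $\sL\hookrightarrow\Omega^{[1]}_X$, gives $\kappa(X,L)\le 1$; observe also that $\Omega^{[1]}_X\cong\sL^{\oplus n}$ is $H$-semistable for every ample $H$ (after twisting by $\sO_X(-L)$ a subsheaf embeds into $\sO_X^{\oplus n}$), and that \eqref{eq:qcce} holds for it automatically. Suppose $\kappa(X)=1$, and pass to a resolution $\wtilde X\to X$ realising the Iitaka fibration of $\sL$ as a morphism $g\colon\wtilde X\to B$ onto a smooth curve $B$. For a general, hence smooth, fibre $F$, disjoint from the exceptional locus and from the finite singular set of $X$, the relative cotangent sequence exhibits $g^*\Omega^1_B|_F\cong\sO_F$ as a \emph{saturated} sub-line-bundle of $\Omega^1_{\wtilde X}|_F\cong(\sL|_F)^{\oplus n}$; since $\kappa(F,\sL|_F)=0$ forces $h^0(F,\sL|_F)\le 1$, tracking the one-dimensional image of $\sO_F\hookrightarrow(\sL|_F)^{\oplus n}$ through the $n$ isomorphic summands identifies this saturation with a diagonally embedded copy of $\sL|_F$. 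Hence $\sL|_F\cong\sO_F$ and $\Omega^1_F\cong\sO_F^{\oplus(n-1)}$, so $F$ is a smooth projective variety with trivial tangent bundle, i.e.\ an Abelian variety by Wang's theorem. After checking that $X$ then has generically large fundamental group on $F$, Proposition~\ref{prop:K63} yields a finite étale cover of $X$ birational to an Abelian group scheme $\what a\colon\what A\to\what Y$ over a curve; comparing the homogeneous splitting $\Omega^{[1]}_X\cong\sL^{\oplus n}$ with the structure of $\what a$ --- computing $\det$ of the cotangent sheaf along the zero section and applying Lemma~\ref{lem:c1offlat} to see that the flat summands have vanishing first Chern class, and using Proposition~\ref{prop:abundance} in the subcase where $\what Y$ is of general type --- shows that $\what a$ is isotrivial and that $L$ carries no base contribution, contradicting $\kappa(X,L)=1$. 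Thus $\kappa(X)\le 0$.

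It remains to treat the case $\kappa(X)\le 0$ with $K_X$ nef. Here generic semi-positivity of $\Omega^{[1]}_X$ (Miyaoka; available since $K_X$ is pseudo-effective), combined with the $H$-semistability of $\sL^{\oplus n}$ and the Bogomolov equality, forces $\nu(K_X)=0$: a non-trivial positive part of $K_X$ would, via its own Iitaka fibration over a curve, reproduce the contradiction of the previous paragraph. Hence $K_X\equiv 0$, and by abundance for klt varieties with numerically trivial canonical class we obtain $K_X\sim_{\bQ}0$, as claimed. I expect the case $\kappa(X)=1$ to be the main obstacle: matching the rigid homogeneous decomposition $\sL^{\oplus n}$ against the geometry of a fibration over a curve --- in particular excluding non-isotrivial families --- is precisely where Bogomolov--Sommese vanishing, Kollár's characterisation of étale quotients of Abelian group schemes (Proposition~\ref{prop:K63}), and the vanishing of $c_1$ for flat sheaves (Lemma~\ref{lem:c1offlat}) have to be brought together, and it is also the mechanism that reduces the case $\kappa(X)\le 0$ to known abundance statements.
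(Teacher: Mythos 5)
Your determinant argument for $\sL^{[\otimes n]}\cong\omega_X$ and the $\bQ$-Cartier property agrees with the paper, but beyond that every main step of your outline has a genuine gap. First, the reduction to $K_X$ nef is not available: for a klt variety whose canonical class is merely pseudo-effective, existence of a minimal model and termination of flips are open in dimension $\geq 4$, so ``run a $K_X$-MMP; it terminates, $K_X$ being pseudo-effective'' cannot be invoked; moreover the back-transfer is circular, since the theorem only asserts $K_X\sim_{\bQ}0$ (not that $X'$ is birational to an Abelian variety), so ``granting the theorem for $X'$'' would only give $K_X\equiv(\text{effective exceptional divisor})$, which you would still have to exclude. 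Second, in the case $\kappa=1$ the general fibre $F$ of the Iitaka fibration need not avoid the exceptional locus of the resolution (an exceptional divisor over one of the isolated singular points can dominate the base curve, in which case \emph{every} fibre meets it), nor the singular points themselves (they may lie in the stable base locus of $|mL|$); so the identification $\Omega^1_{\wtilde{X}}|_F\cong(\sL|_F)^{\oplus n}$ on which the whole fibre analysis rests is unjustified, and the two decisive steps --- ``after checking that $X$ has generically large fundamental group on $F$'' and ``comparing \dots\ shows that $\what{a}$ is isotrivial and that $L$ carries no base contribution'' --- are assertions, not arguments. Third, and most seriously, the case $\kappa(X)\le 0$ with $\nu(K_X)>0$, which is the actual core of the theorem, is not addressed: when $\kappa\le 0$ there is no Iitaka fibration of ``the positive part'' over a curve to which the previous paragraph could be applied, and generic semipositivity together with semistability of $\sL^{\oplus n}$ does not force $\nu(K_X)=0$.

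For contrast, the paper's proof is analytic and short-circuits all of this. By Proposition~\ref{prop:icq} the singularities are isolated quotient singularities; passing to a Mumford cover and then a log resolution, the reflexive pull-back of $\sL$ becomes a genuine pseudo-effective line bundle, which is equipped with a singular metric $h$ with plurisubharmonic local weights. The $n$ inclusions $\sL\hookrightarrow\Omega^{[1]}_X$ pull back, via the functorial pull-back of reflexive forms \cite[Thm.~4.3]{GKKP11}, to twisted $1$-forms with values in this line bundle; by Demailly's integrability theorem \cite{Dem02} their kernel foliations have codimension one and the curvature of $h$ vanishes along their leaves. Since at general points the $n$ foliations span the tangent space, the curvature current $\Theta_h$ is supported on the divisorial branch locus, and the support theorems of \cite[Chap.~III]{DemaillyBook2012} combined with the leafwise vanishing force $\Theta_h\equiv 0$; hence $\sL\equiv 0$, so $K_X\equiv 0$, and \cite[Cor.~4.9]{Nakayama04} upgrades this to $K_X\sim_{\bQ}0$. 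Some mechanism of this kind --- turning the $n$ independent embeddings of a pseudo-effective sheaf into $\Omega^{[1]}_X$ into the vanishing of its numerical class --- is exactly what your proposal lacks, and without it the MMP/Iitaka-fibration route (Proposition~\ref{prop:K63} included) does not close.
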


Theorem~\ref{split} is shown in Section~\vref{ssec:potsplit} below.  The
following much simpler result asserts that in case where the canonical class is
\emph{not} pseudo-effective, the variety $X$ can at least never be Fano.  We
include the short proof because we found it instructive, even though the result
is not used later.

\begin{thm}\label{Fano-split}
  Let $X$ be a normal, projective klt variety of dimension $n>1$.  Assume that
  there exists a $ℚ$-Cartier, Weil divisorial sheaf $ℒ$ such that
  $Ω^{[1]}_X ≅ ℒ^{⊕ n}$.  Then $X$ is not $ℚ$-Fano.  In other words, the
  anti-canonical divisor class $-K_X$ is not $ℚ$-ample.
\end{thm}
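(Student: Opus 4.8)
The plan is to assume that $-K_X$ is $ℚ$-ample and to contradict this by producing a rational curve of small anticanonical degree inside the smooth locus $X_{\reg}$.

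First I would rephrase the hypothesis. Taking reflexive determinants in $Ω^{[1]}_X ≅ ℒ^{⊕ n}$ (the computation underlying the first assertion of Theorem~\ref{split}) gives $ω_X ≅ ℒ^{[⊗ n]}$. Writing $ℒ = 𝒪_X(L)$ for a Weil divisor $L$, this yields the linear equivalence $K_X \sim nL$ of $ℚ$-Cartier divisors, so that $-K_X$ is $ℚ$-ample if and only if $-L$ is. Suppose from now on that this is the case.

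Next I would invoke the fact that a normal projective klt variety with ample anticanonical divisor is rationally connected, by the theorems of Zhang and of Hacon--McKernan. In particular, a very general --- hence smooth --- point of $X$ lies on a rational curve $C$ with $0 < -K_X·C ≤ \dim X + 1 = n+1$, by the usual bend-and-break estimate. The step I expect to cost the most effort, and which I would want to extract carefully from the standard theory of rational curves, is that $C$ can in addition be chosen inside $X_{\reg}$: a suitable minimal family of rational curves through a fixed very general point of a rationally connected variety sweeps out a dense subset of the variety, so a general member of it avoids the singular locus $\sing X$, which has codimension at least two in $X$. Let $f : ℙ^1 → C ⊆ X_{\reg}$ be the normalisation and put $d := \deg f^*\bigl(ℒ|_{X_{\reg}}\bigr) = L·C$, an integer. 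Since $-L$ is $ℚ$-ample one has $d ≤ -1$, and $-K_X·C = -n·d$ because $K_X \sim nL$.

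Finally I would sharpen $d ≤ -1$ to $d ≤ -2$. Over $X_{\reg}$ the reflexive differentials coincide with ordinary Kähler differentials, so $Ω^1_{X_{\reg}} ≅ \bigl(ℒ|_{X_{\reg}}\bigr)^{⊕ n}$ and hence $f^*Ω^1_{X_{\reg}} ≅ 𝒪_{ℙ^1}(d)^{⊕ n}$. The canonical pull-back homomorphism $f^*Ω^1_{X_{\reg}} → Ω^1_{ℙ^1} ≅ 𝒪_{ℙ^1}(-2)$ does not vanish, since $f$ is non-constant; but
\[
  \Hom_{ℙ^1}\!\bigl(𝒪_{ℙ^1}(d)^{⊕ n},\; 𝒪_{ℙ^1}(-2)\bigr) ≅ H^0\bigl(ℙ^1,\; 𝒪_{ℙ^1}(-d-2)\bigr)^{⊕ n},
\]
which is non-zero only if $-d-2 ≥ 0$. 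Hence $d ≤ -2$, so $-K_X·C = -n·d ≥ 2n$. Comparing this with the bound from the previous paragraph gives $2n ≤ n+1$, impossible because $n = \dim X > 1$. Therefore $-K_X$ is not $ℚ$-ample, which is the assertion of the theorem. Apart from the curve-selection step, all of this is a short computation with line bundles on $ℙ^1$ together with the elementary fact that $Ω^{[1]}_X$ and $Ω^1_X$ agree over $X_{\reg}$.
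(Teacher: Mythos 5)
Your reduction $K_X \sim nL$ and the endgame are fine: for a non-constant $f:ℙ^1 → X_{\reg}$ the non-vanishing of $f^*Ω^1_{X_{\reg}} → Ω^1_{ℙ^1}$ indeed forces $d=\deg f^*ℒ ≤ -2$, hence $-K_X·C ≥ 2n$. The gap is the curve you feed into this computation. You need a rational curve through a very general point which (a) lies in $X_{\reg}$ and (b) has $-K_X·C ≤ n+1$, and neither is available in the generality you invoke. The bend-and-break bound $\dim X+1$ is a theorem about smooth varieties (the deformation-theoretic estimate and the minimal-covering-family argument are carried out on a smooth ambient space); what one can quote off the shelf for klt $ℚ$-Fano varieties is the cone-theorem length bound $≤ 2\dim X$, which gives $2n ≤ 2n$ and so exactly fails to produce your contradiction. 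Worse, the statement you hope to extract is false for klt $ℚ$-Fanos: on $X=\bP(1,1,1,2)$, the cone over the Veronese surface with one isolated $\bZ/2$-quotient point, any curve through a general point that avoids the singular point has integral degree against the Weil generator $\sO_X(1)$ (which is Cartier on $X_{\reg}$), hence $-K_X$-degree at least $5=\dim X+2$, whereas the minimal curves, the rulings of degree $5/2$, all pass through the singular point. The same example defeats your justification of (a): a minimal family through a fixed very general point need not dominate $X$, and here every member of it meets $X_{\sing}$, so ``general member avoids the codimension-two singular locus'' does not follow; switching to a larger family to escape the singularity destroys the degree bound. In short, to produce the required curve you would have to exploit the splitting hypothesis itself, which amounts to a different (and so far missing) argument.

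For comparison, the paper's proof is short and cohomological: from $K_X \sim nL$ and ampleness of $-K_X$ one gets that $L-K_X \sim -(n-1)L$ is ample (this is where $n>1$ enters), so Kawamata--Viehweg vanishing on the klt space gives $H^1\bigl(X,\,ℒ\bigr)=0$ and hence $H^1\bigl(X,\,Ω^{[1]}_X\bigr)=0$; but this group contains the non-zero class of an ample divisor, as one sees via the Leray spectral sequence for a resolution $π$ together with $π_*Ω^1_{\wtilde{X}} = Ω^{[1]}_X$. If you want to keep a curve-theoretic proof, you must replace the appeal to ``the usual bend-and-break estimate'' by an actual construction of low-degree rational curves inside $X_{\reg}$, valid on the singular space, and the example above shows this cannot be a purely general fact about klt Fano varieties.
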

\begin{proof}
  We argue by contradiction and assume that $X$ \emph{is} Fano.  Choose a
  divisor $L$ for the Weil divisorial sheaf $ℒ$.  Given the linear equivalence
  $K_X \sim n·L$, the Fano assumption guarantees that $L-K_X$ is $ℚ$-ample.
  Therefore $H¹\bigl(X,\,ℒ \bigr) = 0$ by Kawamata-Viehweg vanishing,
  \cite[1-2-5]{KMM87}.  In particular, we find that
  $H¹ \bigl(X,\, Ω^{[1]}_X\bigr) = 0$, which is absurd, as this cohomology group
  contains the class of an ample divisor.  In more detail: Let $D$ be an ample
  Cartier divisor on $X$, let $π: \what{X} → X$ be a desingularisation and
  consider the Chern class
  $c_1\bigl(𝒪_{\what{X}}(π^*D)\bigr) ∈ H¹\bigl(\what{X},\, Ω¹_{\what{X}}\bigr)$.
  Since $π_*(Ω¹_{\what{X}}) = Ω^{[1]}_X$ by \cite[Thm.~1.1]{GKK08}, the Leray
  spectral sequence yields
  \[
    H¹\bigl(\what{X},\, Ω¹_{\what{X}}\bigr) = \underbrace{\vphantom{\Bigl(}H¹\bigl(X,\, Ω^{[1]}_X\bigr)}_{\text{classes that are trivial on all $π$-fibres}} ⊕
    \underbrace{\ker \Bigl(H⁰\bigr(X,\, R¹π_*Ω¹_{\what{X}}\bigl) → H²\bigl(X,\,
    Ω^{[1]}_X\bigr) \Bigr)}_{\text{classes that are non trivial on some $π$-fibres}}.
  \]
  Hence $D$ defines an non-zero class in $H¹\bigl(X,\, Ω^{[1]}_X\bigr)$,
  contradiction.
\end{proof}

\subsection{Proof of Proposition~\ref*{prop:icq}}
\label{ssec:popicq}
\approvals{Daniel & yes \\Stefan & yes\\ Thomas & yes}

The proof of Proposition~\ref{prop:icq} uses the following lemma, which might be
of independent interest.

\begin{lem}\label{lem:glcs}
  Let $X$ be a normal, complex space with only cyclic quotient singularities.
  Let $S ⊆ X_{\sing}$ be any irreducible component of the singular locus.  Then,
  there exists a non-empty, open set $S° ⊆ S$ such that every $x ∈ S°$ admits an
  open neighbourhood $U=U(x) ⊆ X$ of the form $U = W ⨯ V$, where $W$ is smooth
  and $V$ has an isolated quotient singularity.
\end{lem}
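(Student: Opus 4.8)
The plan is to reduce to a local, explicit model of a cyclic quotient singularity and then to use that the tangent action of a finite cyclic group decomposes into characters. Fix an irreducible component $S ⊆ X_{\sing}$ and a general point $x ∈ S$. Since $X$ has only cyclic quotient singularities, there is an open neighbourhood $U_0 = U_0(x)$ and a presentation $U_0 ≅ \widetilde{U}_0/G$, where $\widetilde{U}_0 ⊆ ℂ^N$ is a ball centred at the origin and $G ⊂ \GL(N,ℂ)$ is a finite cyclic group acting linearly (after the standard linearisation of a finite group action fixing a point), with $0 ↦ x$. Let $G' ⊆ G$ be the stabiliser (isotropy group) of a general point $\widetilde{x}$ of the preimage of $S$; then near $\widetilde{x}$ the quotient map looks like $\widetilde{U}_0 → \widetilde{U}_0/G'$ followed by an étale map, so after shrinking we may assume $G = G'$ fixes an entire linear subspace $L := (ℂ^N)^{G}$ pointwise, and that $\widetilde{x} ∈ L$. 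The key point is that $L$ has positive dimension: its image in $U_0$ is exactly (an open subset of) the component $S$ through $x$, since a general point of $S$ has isotropy $G$ and the fixed locus of a linear action is a linear subspace.

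Next I would split off the fixed directions. Because $G$ is finite (hence its action is completely reducible) and cyclic, choose coordinates $(w, v) = (w_1,\dots,w_k, v_1,\dots,v_m)$ on $ℂ^N = L ⊕ L^{\perp}$, where $L = \{v = 0\}$ is the fixed subspace and $L^{\perp}$ is a $G$-stable complement on which $G$ acts without nonzero fixed vectors, diagonally by characters since $G$ is cyclic. Then the $G$-action on the polydisc $\widetilde U = Δ^k_w ⨯ Δ^m_v$ is trivial on the first factor, so
\[
  U := \widetilde U / G ≅ Δ^k_w ⨯ \bigl(Δ^m_v / G\bigr) = W ⨯ V,
\]
with $W := Δ^k_w$ smooth of dimension $k = \dim L = \dim S$ (locally at $x$), and $V := Δ^m_v / G$ a quotient of a polydisc by a finite cyclic group acting with $0$ as its only fixed point, i.e.\ $V$ has an isolated quotient singularity at the image of $0$. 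This gives the desired product neighbourhood. Running this over a general $x ∈ S$, the locus $S°$ of points where the isotropy is the generic (maximal) one and where $x$ is a smooth point of $S$ is open and non-empty, which is the $S°$ in the statement.

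The main obstacle — really the only delicate point — is justifying that the component $S$ of the singular locus is, near a general point, precisely the image of the fixed subspace $L$ of a \emph{single} cyclic group acting linearly. Two things have to be checked: first, that after passing to the isotropy group of a general point of the preimage one may assume $G$ fixes $L$ pointwise and acts fixed-point-freely on $L^{\perp}\smallsetminus\{0\}$ (this is the standard reduction: discard characters that are trivial, since those directions contribute to the smooth factor, and note that on the remaining directions the cyclic group acts freely on a punctured neighbourhood precisely when no nontrivial element fixes a nonzero vector there — which one arranges by the definition of the fixed subspace); second, that the singular locus of $U = W ⨯ V$ is $W ⨯ \{v=0\}$, so that $S$ really is (an open subset of) $L$ and in particular is smooth of the expected dimension near $x$. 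Both are routine once the linear model is in place; the care needed is only in the bookkeeping of isotropy groups, for which one invokes that quotient singularities are analytically locally standard and that the action can be linearised fixing the marked point.
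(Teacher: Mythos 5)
Your construction follows the same route as the paper's proof (linearise the cyclic action at a point lying over $S$, diagonalise it, split off the directions fixed by the isotropy group), but the one step you yourself single out as delicate and then dismiss as routine is where the whole content of the lemma sits, and the justification you give for it is not correct. You argue that since $L^{\perp}$ contains no nonzero vector fixed by the whole isotropy group $G'$, the group ``acts freely on a punctured neighbourhood'', so that $V=\Delta^m_v/G'$ has an isolated singularity, and you say this is ``arranged by the definition of the fixed subspace''. The definition of $L=(\bC^N)^{G'}$ only controls vectors fixed by \emph{all} of $G'$; it says nothing about vectors fixed by individual nontrivial elements, and it is those that govern the singular locus of the quotient. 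For instance, let $\bZ_4=\langle\xi\rangle$ act on $\bC^3$ by $\xi\cdot(u_1,u_2,w)=(iu_1,\,iu_2,\,-w)$: no nonzero vector is fixed by the whole group, yet $\xi^2$ fixes the line $\{u_1=u_2=0\}$ and acts by $-\mathrm{id}$ transversally to it, so the quotient is singular along a curve. Hence neither freeness on the punctured ball nor isolatedness of the singularity follows from your stated reason; and your second ``routine'' check --- that $\sing(W\times V)=W\times\{v=0\}$ --- is literally equivalent to the isolatedness you are trying to establish, so it cannot be taken for granted either.

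What is true, and what the lemma asserts, is that at a \emph{general} point of $S$ the transverse quotient $L^{\perp}/G'$ is singular only at $0$; this genuinely uses the generality of the point, which your argument never invokes (applied at the origin of the example above, your reasoning would produce a ``$V$'' with non-isolated singularities --- the lemma survives only because the origin is not a general point of that singular curve). One way to close the gap: choose $x\in S$ avoiding all other components of $X_{\sing}$ and minimising the order of the local quotient group along $S$; if $\sing\bigl(L^{\perp}/G'\bigr)$ contained a nonzero point, it would contain a punctured cone accumulating at $0$, so $S=X_{\sing}$ would contain, arbitrarily close to $x$, singular points whose local groups are proper subgroups of $G'$, contradicting minimality. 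The paper instead reduces, via Cartan's linearisation at a totally branched point, to the global diagonal model $\xi\cdot(v_1,\dots,v_N)=(\xi^{n_1}v_1,\dots,\xi^{n_N}v_N)$ and verifies the product structure by an explicit computation in these coordinates, where the components of the singular locus are images of coordinate subspaces and the bad locus is visibly a proper closed subset of $S$. Some argument of this kind must replace ``which one arranges by the definition of the fixed subspace''; note also that the stronger statement you aim for (no nontrivial element of $G'$ fixes a nonzero normal vector at a general point of $S$) can fail even when the conclusion holds, e.g.\ for the presentation of $\bC\times\bigl(\bC^2/\bZ_4\bigr)$ with weights $(1,2)$, so the correct target is directly $\sing\bigl(L^{\perp}/G'\bigr)=\{0\}$ rather than freeness of the punctured action.
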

\begin{proof}
  To begin, consider the special case where $V$ is a complex vector space,
  $ρ : ℤ_m → \GL(V)$ is a linear representation of a cyclic group and where
  $X ⊆ V / ℤ_m$ is an open neighbourhood of $[\vec 0]$.  Recall that $ρ$
  decomposes into direct a sum of one-dimensional representations.  Identifying
  $ℤ_m$ with roots of unity, we can thus find linear coordinates on $V$ such
  that multiplication with $ξ ∈ ℤ_m$ is given as
  \[
    ξ · (v_1, …, v_m) = \bigl(ξ^{n_1}· v_1, …, ξ^{n_m}· v_m\bigr).
  \]
  An elementary computation in these coordinates shows that $S°$ exists, and can
  in fact be chosen to be dense in $S$.

  Returning to the general case where $X$ is arbitrary, let $s ∈ S$ be any
  point.  By assumption, there exists an open neighbourhood $U = U(x)$ and a
  cyclic cover $γ : \wtilde U → U$, say with group $G$.  Shrinking $U$ and
  passing to a subgroup, if need be, we may assume without loss of generality
  that $γ$ is totally branched over $s$, that is $γ^{-1}(s) = \{ \wtilde s \}$.
  The cyclic group thus acts on $\wtilde U$ and fixes $\wtilde s$.  The group
  $G$ clearly acts on the vector space $T_{\wtilde U}|_{\wtilde s}$ and
  linearisation at the fixed point, \cite[Proof of Théor\`eme 4]{MR0084174} or
  \cite[Cor.~2 on p.~13]{MR782881}, shows the existence of a $G$-invariant open
  neighbourhood $\wtilde U'$ of $\vec 0 ∈ T_{\wtilde U}|_{\wtilde s}$ and an
  equivariant, biholomorphic map between $\wtilde U$ and $\wtilde U$.  In other
  words, we are in the situation discussed in the first paragraph of this proof,
  where the claim has already been shown.
\end{proof}

\begin{proof}[Proof of Proposition~\ref{prop:icq}]
  As a first step in the proof of Proposition~\ref{prop:icq}, we claim that $ℒ$
  is $ℚ$-Cartier.  In fact, taking determinants of both sides in \eqref{eq:icq},
  we obtain that $ℒ^{[⊗ \dim X]} ≅ ω_X$, which is $ℚ$-Cartier by assumption.
  Choose a minimal number $N ∈ ℕ^+$ such that $ℒ^{[⊗ N]}$ is locally free.  The
  reflexive tensor product $\bigl( Ω^{[1]}_U \bigr)^{[⊗ N]}$ is then likewise
  locally free.

  As a second step, we show that $X$ has cyclic quotient singularities.  In
  fact, given any $x ∈ X$, we find an open neighbourhood $U=U(x)$ over which
  $ℒ^{[⊗ N]}$ is trivial.  Chose a trivialisation and let $γ: \wtilde{U} → U$ be
  the associated index-one cover, which is cyclic of order $N$.  The complex
  space $\wtilde{U}$ has again klt singularities and
  $γ^{[*]}\, ℒ ≅ 𝒪_{\wtilde{U}}$.  In particular, it follows that
  $Ω^{[1]}_{\wtilde{U}} ≅ γ^{[*]} Ω^{[1]}_U$ and $𝒯_{\wtilde U}$ are both free.
  The solution of the Lipman-Zariski conjecture for spaces with klt
  singularities\footnote{Notice that the proof of \cite[Thm.~3.8]{Druel13a}
    works in the analytic category without any changes.}, then asserts that
  $\wtilde{U}$ is smooth.  We conclude that $X$ has cyclic quotient
  singularities only.

  As a third and last step, we show that the singularities of $X$ are isolated.
  Assume to the contrary and let $S ⊆ X_{\sing}$ be a positive-dimensional,
  irreducible component.  We have seen in Lemma~\ref{lem:glcs} that there a
  non-empty, open subset $S° ⊆ S$ such that every $x ∈ S°$ admits an open
  neighbourhood $U = U(x) ⊆ X$ of product form $U = W ⨯ V$, where $W$ is smooth
  and $V$ has an isolated quotient singularity.  In particular,
  $\dim W = \dim S$ and
  \[
    Ω^{[1]}_U ≅ Ω¹_W ⊕ Ω^{[1]}_V.
  \]
  In particular, the reflexive tensor power $\bigl( Ω^{[1]}_U \bigr)^{[⊗ N]}$ is
  written as a direct sum of reflexive sheaves with
  $\bigl( Ω^{[1]}_W \bigr)^{[⊗ (N-1)]} ⊗ Ω^{[1]}_V$ as one of its direct
  summands.  It follows that $\bigl( Ω^{[1]}_U \bigr)^{[⊗ N]}$ is not locally
  free and accordingly, neither is $\bigl( Ω^{[1]}_X \bigr)^{[⊗ N]}$.  This
  contradicts the results obtained in the first paragraph of this proof, and
  therefore finishes the proof of Proposition~\ref{prop:icq}.
\end{proof}

\subsection*{Proof of Theorem~\ref*{split}}
\label{ssec:potsplit}
\approvals{Daniel & yes \\Stefan & yes\\ Thomas & yes}

We work in the setting of Theorem~\ref{split}.  This isomorphism
$ℒ^{[⊗ n]} ≅ ω_X$ (and thus the assertion that $ℒ$ is $ℚ$-Cartier) follows by
taking determinants on either side of \eqref{eq:decomposition_of_cotangent}.  It
will therefore make sense to assume that $ℒ$ is pseudo-effective, and we
maintain this assumption throughout.

Since $X$ has (isolated) finite quotient singularities by
Proposition~\ref{prop:icq}, we may apply \cite[Prop.~3.10]{GKPT19b} and choose a
\emph{Mumford cover} $γ: Y → X$, that is, a finite morphism from a normal
variety $Y$ to $X$ factoring through the local quotient presentations of the
singularities of $X$; see \cite[Sect.~3]{GKPT19b} for a detailed discussion of
such covers.  In particular, the reflexive pullback $γ^{[*]}ℱ$ of any Weil
divisorial sheaf $ℱ$ on $X$ is locally free on $Y$.  Let $π: \what{Y} → Y$ be a
log-resolution of singularities and set $τ = γ◦π$, so that we are considering a
commutative diagram
\[
  \begin{tikzcd}[column sep=3cm]
    \what{Y} \ar[d, "π\text{, log resolution}"'] \ar[rd, bend left=10, "τ = γ◦π"] \\
    Y \ar[r, "γ\text{, Mumford cover}"'] & X.
  \end{tikzcd}
\]
Finally, let $R$ be the subset of $\what{Y}$ where $τ$ is not étale.  We
decompose $R$ it into a divisorial part $R_1$ and non-divisorial part $R_2$, so
that $R = R_1 ∪ R_2$.

Since $γ$ is finite, $ℒ_Y := γ^{[*]}ℒ$ is pseudo-effective.  As $ℒ_Y$ is also
locally free by the observation made above, the pull back
$ℒ_{\what{Y}} := π^*ℒ_Y$ is a locally free, pseudo-effective sheaf on
$\what{Y}$.  We may hence choose a singular metric $h$ on the line bundle
associated to $ℒ_{\what{Y}}$ that is locally given by
$h(•) = e^{-\varphi(•)}|•|$, where $\varphi$ is plurisubharmonic and in
$L¹_{\loc}$.
  
The decomposition $Ω^{[1]}_X ≅ ℒ^{⊕ n}$ induces $n$ injective morphisms
$ι_j : ℒ → Ω_X^{[1]}$.  Using functorial pullback for differential forms on klt
spaces, \cite[Thm.~4.3]{GKKP11}, we obtain induced maps
\[
  \begin{tikzcd}
    τ^{[*]}ℒ \ar[r] & τ^{[*]}Ω_X^{[1]} \ar[rrr, "dτ\text{, pull-back}"] &&& Ω¹_{\what{Y}}.
  \end{tikzcd}
\]
Composing this with the natural map
\[
  \begin{tikzcd}
    ℒ_{\what{Y}} = π^*\bigl(γ^{[*]}ℒ\bigr) \ar[r] & (γ◦π)^{[*]} ℒ = τ^{[*]} ℒ
  \end{tikzcd}
\]
we obtain naturally induced morphisms
$\what{ι}_j : ℒ_{\what{Y}} → Ω¹_{\what{Y}}$, which are generically injective and
hence injective, as $ℒ_{\what{Y}}$ is locally free.  We view these morphisms as
sections
\[
  θ_j ∈ H⁰\bigl(\what{Y},\, Ω¹_{\what{Y}} ⊗ ℒ_{\what{Y}}^{-1} \bigr)
\]
and consider the subsheaves $𝒮_j ⊂ 𝒯_{\what{Y}}$ consisting of germs of
holomorphic vector fields $\vec{v}_j$ such that the contraction with $θ_j$
vanishes: $i_{\vec{v}_j}(θ_j) = 0$.

By a fundamental theorem of Demailly, \cite[p.~93, Main Thm.]{Dem02}, the
sheaves $𝒮_j$ are holomorphic foliations of codimension $1$ and the curvature of
$h$ vanishes along the leaves of $𝒮_j$ for all $j$.  More precisely, let
$Z ⊂ \what{Y}$ be the set of all those points at which at least one of the
foliations $𝒮_j$ becomes singular; this is an analytic subset of codimension at
least $2$.  Let $y ∈ \what{Y} ∖ Z$ and let $\varphi$ be a local plurisubharmonic
function defining the metric $h$.  If $B ⊂ X$ is the leaf through $y$ of one of
the foliations $𝒮_{•}$, then $B$ is smooth at $y$, locally given by the
vanishing of a local coordinate $z_1$ on $Y$.  Moreover, the computation done in
\cite[p.~97, last paragraph of the proof]{Dem02} says that
\begin{equation}\label{eq:vanishing_along_leaves}
  \frac{∂² \varphi} {∂ z_j ∂ \overline {z}_k}(y) = 0 %
  \quad \quad \text{ for all } j,k ∈ \{2, \dots, n\}.
\end{equation}
If in addition, $y ∈ Y∖(Z∪R)$, then the decomposition
\eqref{eq:decomposition_of_cotangent} together with the definition of $R$
implies that the tangent spaces to these leaves span the tangent space
$T_y\what{Y}$.  From the vanishing of the curvature along the leaves obtained in
\eqref{eq:vanishing_along_leaves}, we can therefore conclude that
\[
  \frac{∂² \varphi} {∂ z_j ∂ \overline {z}_k}(y) = 0 %
  \quad \quad \text{ for all } j,k ∈ \{1, \dots, n\},
\]
and hence the curvature current $Θ_h$ of $h$ vanishes on $\what{Y} ∖ (Z ∪ R)$.
Restricting to $\what{Y} ∖ R_1$ and applying \cite[Chap.~III,
Cor.~2.11]{DemaillyBook2012} to the normal $(1,1)$-current $Θ_h$ and the at
least two-codimensional subset $Z ∪ R_2$, we see that
$Θ_h|_{\what{Y} ∖ R_1} \equiv 0$.  In other words, $\supp (Θ_h) ⊂ R_1$.

Consequently, if we write the decomposition of $R_1$ into irreducible components
as $R_1 = \bigcup_{j=1}^k A_j$, then \cite[Chap.~III,
Cor.~2.14]{DemaillyBook2012} implies that the positive current $Θ_h$ admits a
decomposition into currents of integration of the form
\begin{equation}\label{eq:curvature_decomposition}
  Θ_h = \sum_{j=1}^k λ_j·[A_j], \quad \text{ where } λ_j ∈ ℝ^{≥ 0}.
\end{equation}
For dimension reasons, in each $A_j$ there exist points $p_j$ that are not
contained in $Z$.  The leaf $B_j$ of any of the foliations through $p_j$
intersects $A_j$ in a codimension-one subset $A_j ∩ B_j ⊂ B_j$.  Hence,
\eqref{eq:curvature_decomposition} contradicts the vanishing statement
\eqref{eq:vanishing_along_leaves}, unless all the $λ_j$ are equal to zero.  We
conclude that $Θ_h$ vanishes identically on $\what{Y}$.  Consequently, we find
that the equations
\[
  c_1(ℒ) · [H_1] ⋯ [H_{n-1}] = 0 %
  \quad\text{and}\quad %
  c_1(ℒ_{\what Y}) · [τ^*H_1] ⋯ [τ^*H_{n-1}] = 0
\]
hold for any choice of ample divisors $H_1, …, H_{n-1}$ on $X$.  We conclude
that $ℒ \equiv 0$ and therefore $K_X \equiv 0$.  The claimed $ℚ$-linear
triviality then follows from the special case of the Abundance conjecture proven
in \cite[Cor.~4.9]{Nakayama04}.  The proof of Theorem~\ref{split} is thus
completed.  \qed

%
% Do not edit the following line.  The text is automatically updated by
% subversion.
%
\svnid{$Id: 05-JRproof.tex 656 2020-10-13 14:06:05Z peternell $}

\section{Proof of Theorem~\ref*{thm:charQAbelian}}
\subversionInfo

\subsection{Preparation for the proof}
\approvals{Daniel & yes \\Stefan & yes\\ Thomas & yes}

Using the criterion for projective flatness spelled out in
\cite[Thm.~1.6]{GKP20a}, the assumptions made in Theorem~\ref{thm:charQAbelian}
imply that $Ω¹_{X_{\reg}}$ is projectively flat, at least after going to a
quasi-étale cover.  This allows to apply following lemma in various settings
that appear throughout the proof of Theorem~\ref{thm:charQAbelian}.

\begin{lem}[Consequences of projectively flat cotangent bundle]\label{lem:6-5}
  Let $X$ be a smooth, quasi-projective variety of dimension $n$ where $Ω¹_X$ is
  projectively flat.
  \begin{enumerate}
  \item\label{il:6-5-1} If $K_X$ is nef, then $X$ does not contain rational
    curves.
    
  \item\label{il:6-5-2} Assume that there exists an immersion $η : F → X$ where
    $F$ is projective and smooth.  If $η^* K_X$ is nef and if $K_F \equiv 0$,
    then $F$ is quasi-Abelian.
  \end{enumerate}
\end{lem}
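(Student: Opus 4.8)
I would prove assertions~\ref{il:6-5-1} and~\ref{il:6-5-2} separately; the first is a one-line positivity argument, the second carries the real content. For~\ref{il:6-5-1}, I would argue by contradiction and suppose there is a non-constant morphism $f \colon \bP^1 → X$. Since $\bP^1$ is simply connected and $Ω¹_X$ is projectively flat, the pulled-back projective bundle $ℙ(f^*Ω¹_X)$ is trivial, hence $f^*Ω¹_X ≅ \sO_{\bP^1}(d)^{⊕ n}$ for some integer $d$. Taking determinants, $n·d = \deg f^*K_X = K_X · f_*[\bP^1] ≥ 0$ because $K_X$ is nef, so $d ≥ 0$ and $f^*Ω¹_X$ is nef. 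On the other hand the differential $df \colon f^*Ω¹_X → Ω¹_{\bP^1} ≅ \sO_{\bP^1}(-2)$ is non-zero, so $f^*Ω¹_X$ surjects onto the image of $df$, a line bundle of negative degree, which is not nef. By Fact~\ref{fact:1}, this is absurd.

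For~\ref{il:6-5-2}, I would first note that $η^*Ω¹_X$ is again projectively flat, pull-backs of projectively flat sheaves being projectively flat. Since $K_F \equiv 0$, the Beauville--Bogomolov decomposition theorem provides a finite étale cover $μ \colon F' → F$ with $F' ≅ A ⨯ \prod_i V_i ⨯ \prod_j W_j$, where $A$ is an Abelian variety, the $V_i$ are Calabi--Yau, and the $W_j$ are irreducible holomorphic symplectic; in particular each $V_i$ and each $W_j$ is simply connected, has trivial canonical sheaf, and satisfies $H⁰(Ω¹) = 0$. If there are no $V_i$- and no $W_j$-factors then $F' ≅ A$ is Abelian and $F$ is quasi-Abelian, as desired. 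Otherwise fix such a factor $Y$, of dimension $m ≥ 1$, and consider the composed morphism $g \colon Y ≅ \{\mathrm{pt}\} ⨯ Y ↪ F' \xrightarrow{μ} F \xrightarrow{η} X$; it is unramified, being a composition of a closed immersion, an étale morphism and an immersion, so the natural map $g^*Ω¹_X → Ω¹_Y$ is surjective.

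The key point is this. As $π_1(Y)$ is trivial and $g^*Ω¹_X$ is projectively flat, $ℙ(g^*Ω¹_X)$ is trivial, hence $g^*Ω¹_X ≅ \sM^{⊕ n}$ for a line bundle $\sM$ on $Y$. Taking determinants, $\sM^{⊗ n} ≅ g^*ω_X$, whose first Chern class is the restriction of the nef class $η^*K_X$ along $Y ↪ F' → F$; hence $\sM$ is nef. On the other hand $Ω¹_Y ⊗ \sM^{-1}$ is a quotient of $g^*Ω¹_X ⊗ \sM^{-1} ≅ \sO_Y^{⊕ n}$, hence globally generated, so its determinant $ω_Y ⊗ \sM^{-m} ≅ \sM^{-m}$ --- using $ω_Y ≅ \sO_Y$ --- is globally generated and in particular nef. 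Thus $\sM$ is both nef and anti-nef, i.e.\ numerically trivial, and since $Y$ is simply connected this forces $\sM ≅ \sO_Y$. Consequently $Ω¹_Y$ is a globally generated quotient of $\sO_Y^{⊕ n}$ of rank $m$ with $\det Ω¹_Y = ω_Y ≅ \sO_Y$; picking suitable global sections $s_1, …, s_m$ of $Ω¹_Y$, the resulting morphism $ψ \colon \sO_Y^{⊕ m} → Ω¹_Y$ has determinant in $H⁰(Y, ω_Y) = H⁰(Y, \sO_Y) = ℂ$ equal to a non-zero constant, hence is an isomorphism. But then $H⁰(Y, Ω¹_Y) ≅ ℂ^m ≠ 0$, contradicting $H⁰(Ω¹_Y) = 0$. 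This contradiction proves~\ref{il:6-5-2}.

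The main obstacle is assertion~\ref{il:6-5-2}: one has to keep careful track of the pull-backs --- projective flatness is preserved under pull-back, and an immersion pre-composed with an étale cover stays unramified, so the cotangent sheaf restricts surjectively --- and one needs the structural input of the Beauville--Bogomolov decomposition, which is precisely what reduces the non-Abelian case to a simply connected factor $Y$ on which $ℙ(g^*Ω¹_X)$ trivialises and the positivity of $η^*K_X$ can be played off against the vanishing $H⁰(Ω¹_Y) = 0$. Assertion~\ref{il:6-5-1} is essentially immediate once one pulls back to $\bP^1$.
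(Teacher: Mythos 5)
Your proof is correct and follows essentially the same route as the paper: for \ref{il:6-5-1} one pulls back to $\mathbb{P}^1$ and uses the splitting of a projectively flat bundle over a simply connected base, and for \ref{il:6-5-2} one invokes the Beauville decomposition, restricts to a simply connected factor where the pull-back of $\Omega^1_X$ becomes $\mathcal{M}^{\oplus n}$, and plays the nefness of $\eta^*K_X$ against the triviality of the canonical bundle to force $\mathcal{M}\cong\mathcal{O}$ and contradict $H^0(\Omega^1)=0$. The only cosmetic difference is that the paper keeps a single simply connected factor $Z$ and derives the contradiction from a non-trivial map $\mathcal{L}^{\otimes m}\to\omega_Z$, whereas you twist by $\mathcal{M}^{-1}$ and argue via global generation; these are equivalent.
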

\begin{proof}
  We prove the statements separately.  Both proofs use the fact that every
  locally free, projectively flat sheaf on a simply connected space is
  isomorphic to a direct sum of the form $ℒ^{⊕\,•}$ where $ℒ$ is invertible,
  cf.~\cite[Prop.~3.11]{GKP20a}.
  
  As to the first statement, let $η: ℙ_1 → X$ be a non-constant map.  Since
  $η^*Ω¹_X$ is projectively flat, there is an integer $m$ such that
  $η^*Ω¹_X ≅ 𝒪_{ℙ_1}(m)^{⊕ n}$.  The canonical morphism $η^*Ω¹_X → Ω¹_{ℙ_1}$
  then yields $m ≤ -2$, contradicting the nefness of $η^*K_X$.

  As to the second statement, applying the classic Decomposition Theorem
  \cite[Thm.~1]{Bea83} and possibly passing to a finite étale cover, we may
  assume that $F$ is of product form, $F ≅ A ⨯ Z$, where $A$ is Abelian
  (possibly a point) and where $Z$ is simply connected with $K_Z = 0$.  We aim
  to prove that $Z$ is a point, argue by contradiction and assume that
  $m := \dim Z$ is positive.  Let $ι: Z → F$ denote the inclusion morphism.  The
  pull-back $(η◦ι)^* Ω¹_X$ is projectively flat and hence of the form $ℒ^{⊕ n}$,
  for a suitable line bundle $ℒ ∈ \Pic(Z)$.  The obvious surjection
  \[
    \begin{tikzcd}
      ℒ^{⊕ n} ≅ (η◦ι)^* Ω¹_X \ar[rr, two heads, "\diff (η◦ι)"] && Ω¹_Z
    \end{tikzcd}
  \]
  induces a non-trivial map $ℒ^{⊗m} → ω_Z ≅ 𝒪_Z$.  But then the assumption that
  $η^*K_X$ and hence also $ℒ$ are nef shows that $ℒ^{⊗m}$ cannot be a proper
  ideal sheaf, so $ℒ^{⊗m} ≅ 𝒪_Z$.  It follows that either $ℒ$ is torsion or that
  $H⁰\bigl(Z,\, Ω¹_Z\bigr) \ne 0$, both contradicting the simple connectedness
  of $Z$.
\end{proof}

\subsection{Proof of Theorem~\ref*{thm:charQAbelian}}
\label{ssec:pf44}
\approvals{Daniel & yes \\Stefan & yes\\ Thomas & yes}

We maintain notation and assumptions of Theorem~\ref{thm:charQAbelian}
throughout the present Section~\ref{ssec:pf44}.  The proof follows the strategy
of \cite{MR3030068} closely, but has to overcome a fair number of technical
difficulties arising from the presence of singularities.  It is fairly long and
therefore subdivided into numerous steps.  The main idea is to show abundance
for $X$, and then to analyse the Iitaka fibration.  Abundance is shown in Steps
4, 5 and 6, with an inductive argument using repeated covers, fibrations, and
restrictions to fibres.

\begin{itemize}
\item Steps 1--3 set the stage, prove minimality of $X$ and put limits on its
  possible numerical dimension and Kodaira dimension.

\item Step 4 fibres $X$ using a Shafarevich map construction.  The general fibre
  is called $F$.

\item Step 5, which is the longest step of this proof, considers a suitable
  cover of $\what{F}$ of $F$ and takes a general fibre of its Albanese map.  An
  analysis of this fibre will then give abundance for $F$.
  
\item Step 6 uses abundance for $F$ to prove abundance for $X$.
  
\item Steps 7--8 end the proof by showing that the Iitaka fibration of a
  suitable cover of $X$ is birational (and then isomorphic) to an Abelian group
  scheme over a proper base.
\end{itemize}

\subsection*{Step 1: Simplification}
\approvals{Daniel & yes \\Stefan & yes\\ Thomas & yes}

Since assumption and conclusion of Theorem~\ref{thm:charQAbelian} are stable
under quasi-étale covers, we may apply \cite[Thm.~1.5]{GKP13}, pass to a maximal
quasi-étale cover and assume that the following holds in addition.

\begin{asswlog}[$X$ is maximally quasi-étale]\label{asswlog:mqe}
  The variety $X$ is maximally quasi-étale.  In other words, the algebraic
  fundamental groups $\what{π}_1(X_{\reg})$ and $\what{π}_1(X)$ agree.
\end{asswlog}

With this assumption in place, the criterion for projective flatness,
\cite[Thm.~1.6]{GKP20a}, implies that $Ω¹_{X_{\reg}}$ is projectively flat.  As
we have seen, this has a number of interesting consequences.

\begin{consequence}[Extension of projectively flat bundles]\label{cons:5-3}
  The extension result for projectively flat bundles, \cite[Prop.~3.9]{GKP20a},
  allows to find a projectively flat $ℙ^{n-1}$-bundle $ℙ → X$ and an isomorphism
  of $X$-schemes, $ℙ|_{X_{\reg}} ≅_X ℙ\big( Ω¹_{X_{\reg}} \bigr)$.
\end{consequence}

\begin{consequence}[Quotient singularities]\label{consequence:isolated}
  The local description of projectively flat sheaves, \cite[Prop.~3.10]{GKP20a},
  and Proposition~\ref{prop:icq} imply that $X$ has only isolated cyclic
  quotient singularities.
\end{consequence}

\begin{notation}\label{not:5-5}
  We chose a projectively flat bundle $ℙ → X$ as in Consequence~\ref{cons:5-3}
  and maintain this choice throughout.  We write $τ: π_1(X) → \PGL(n,ℂ)$ for the
  representation that defines the projectively flat structure on $ℙ$.
\end{notation}

If $K_X$ is numerically trivial, then the Chern class equality \eqref{eq:qcce}
reduces to $\what{c}_2(X) · [H]^{n-2} = 0$, and we may apply
\cite[Thm.~1.2]{LT18} to conclude that $\what{X}$ is quasi-Abelian.  This allows
to make the following assumption.

\begin{asswlog}[$K_X \not \equiv 0$]\label{asswlog:knnt}
  The canonical class $K_X$ is not numerically trivial.
\end{asswlog}

\subsection*{Step 2: Minimality of $X$}
\approvals{Daniel & yes \\Stefan & yes\\ Thomas & yes}

We will show in Claim~\ref{claim:minimal} below that the variety $X$ is minimal.
The following claim, which discusses the geometry of a hypothetical extremal
contraction morphism, is a first step in this direction.

\begin{claim}[Extremal rays]\label{claim:min0}
  Let $φ: X → Y$ be a contraction of a $K_X$-negative extremal ray.  Then, the
  higher direct images of $Ω^{[1]}_X$ vanish.  In other words, we have
  $Rⁱφ_* Ω^{[1]}_X = 0$, for all $i>0$.
\end{claim}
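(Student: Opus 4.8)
The plan is to apply the relative vanishing theorem, Theorem~\ref{thm:relVan}, after reinterpreting $Ω^{[1]}_X$ as (a direct sum of twists of) a Weil divisorial sheaf. The key observation is that by Consequence~\ref{consequence:isolated} (which is in force throughout Section~\ref{ssec:pf44}), $X$ has only isolated cyclic quotient singularities, and the local description of projectively flat sheaves near such singularities forces $Ω^{[1]}_X$ to decompose. More precisely, I would first argue that, at least locally analytically around each singular point (and after passing to the index-one cover), the projectively flat structure guarantees $Ω^{[1]}_X ≅ ℒ^{⊕ n}$ for some Weil divisorial sheaf $ℒ$; globally this need not hold, but what one does get for free is that $ℒ := \det Ω^{[1]}_X = ω_X$ is $ℚ$-Cartier. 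So the first step is to reduce the vanishing of $Rⁱφ_* Ω^{[1]}_X$ to a statement about reflexive sheaves of rank one that are $ℚ$-Cartier.

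Concretely, the cleanest route I would take is: since $Ω^{[1]}_X$ is projectively flat on $X_{\reg}$ and $X$ has isolated quotient singularities, the bundle $ℙ → X$ from Consequence~\ref{cons:5-3} restricts over a neighbourhood of each singular point to a projectivised sum of line bundles, so locally $Ω^{[1]}_X$ is a twist of a sum of copies of a single Weil divisorial sheaf. For the sheaf cohomology computation, however, one does not even need the global structure: it suffices to show $Rⁱφ_* \sF = 0$ for every reflexive rank-one sheaf $\sF$ appearing as a local direct summand, and each such $\sF$ corresponds to a $ℚ$-Cartier Weil divisor $D$ on $X$. The point of a $K_X$-negative extremal contraction $φ$ is that $-K_X$ is $φ$-ample; since $n·D \sim_ℚ K_X$ up to a bounded correction coming from the twist (concretely, any local summand $\sF$ of $Ω^{[1]}_X$ satisfies $\sF^{[⊗ n]} ≅ \det Ω^{[1]}_X ⊗ (\text{invertible, numerically trivial})= ω_X ⊗ (\cdots)$ near the singular fibre), one checks that $D - K_X$ is $φ$-ample: indeed $D - K_X \equiv D + (-K_X)$ and $-K_X$ is $φ$-ample while $D$ is $φ$-nef on the fibre being contracted (it is numerically a negative rational multiple of $K_X$ along the ray). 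Then Theorem~\ref{thm:relVan} applied to the $ℚ$-Cartier Weil divisor $D$ gives $Rⁱφ_*𝒪_X(D) = 0$ for all $i > 0$, and assembling the finitely many local summands yields $Rⁱφ_* Ω^{[1]}_X = 0$.

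The main obstacle I anticipate is the bookkeeping in the previous paragraph: making precise the sense in which $Ω^{[1]}_X$ is, locally near the (finitely many, isolated) singular points, a twist of $ℒ^{⊕ n}$, and controlling the twisting invertible sheaf well enough to verify the positivity hypothesis $D - K_X$ is $φ$-ample. Away from the singular locus, $Ω^{[1]}_X = Ω¹_X$ is locally free, and there the relevant vanishing $Rⁱφ_* Ω¹_X = 0$ for a $K_X$-negative contraction is essentially classical (it follows from relative Kawamata–Viehweg together with the projective flatness of $Ω¹_{X_{\reg}}$, which trivialises the sheaf along the contracted rational curves up to a twist, exactly as in Lemma~\ref{lem:6-5}\ref{il:6-5-1}); so the real work is localising at the finitely many singular points and checking the hypotheses of Theorem~\ref{thm:relVan} there. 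An alternative, possibly slicker, formulation: pass to a quasi-étale (Mumford) cover $γ: Y → X$ on which $γ^{[*]}Ω^{[1]}_X$ becomes locally free, prove the vanishing upstairs where everything is a genuine vector bundle and $-K_Y$ is relatively ample for the induced contraction, and then descend using that $γ$ is finite and quasi-étale so that $Rⁱφ_*$ commutes with the (finite, flat-in-codimension-one) pushforward. I would likely present the argument in whichever of these two forms turns out to require the least technical overhead given the conventions already set up in the paper.
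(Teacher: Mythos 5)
Your endgame coincides with the paper's (reduce to rank-one pieces $ℒ$ with $ℒ^{[⊗ n]} ≅ ω$, check that $D - K_X$ is $φ$-ample, apply Theorem~\ref{thm:relVan}), but the reduction to that situation has a genuine gap: you localise in the wrong place. The sheaf $R^iφ_*Ω^{[1]}_X$ is computed over open sets $U ⊆ Y$, that is, on preimages $V = φ^{-1}(U)$, which contain entire positive-dimensional fibres of $φ$; these fibres need not meet the finitely many singular points of $X$ at all. A decomposition of $Ω^{[1]}_X$ in small neighbourhoods of the singular points therefore says nothing about $R^iφ_*$, and the case you dismiss as ``essentially classical away from the singular locus'' is in fact the heart of the matter: for a general $K_X$-negative contraction of a smooth variety one does \emph{not} have $R^iφ_*Ω¹_X = 0$ (already for the blow-up of a point, $R^1φ_*Ω¹_X$ is a skyscraper; and in the paper this vanishing is subsequently used to derive a contradiction with the existence of an ample class, so it cannot be a formal consequence of relative Kawamata--Viehweg for arbitrary contractions). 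Projective flatness of $Ω¹_{X_{\reg}}$ only splits the sheaf after the underlying projective representation has been trivialised on the relevant open set; restricting to a single contracted rational curve, as in Lemma~\ref{lem:6-5}, gives a splitting on that curve but not on a neighbourhood of the fibre, which is what the direct image sees. The missing ingredient is precisely Takayama's theorem, Proposition~\ref{prop:takayama}: $V = φ^{-1}(U)$ is connected and simply connected for suitable analytic neighbourhoods $U$ of points of $Y$, whence \cite[Prop.~3.10]{GKP20a} yields $Ω^{[1]}_V ≅ ℒ^{⊕ n}$ on all of $V$. This is the first step of the paper's proof, and your proposal never invokes it, nor anything that could replace it.

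Two smaller points. First, your verification of $φ$-ampleness of $D - K_X$ is garbled: since $n·D$ is (up to a numerically trivial twist) the restriction of $K_X$, the divisor $D$ is $φ$-\emph{negative}, not $φ$-nef; the correct computation is $D - K_X \equiv -\tfrac{n-1}{n}·K_X$ relatively, which is $φ$-ample. Second, because one is forced to work on analytic open sets $V$, Theorem~\ref{thm:relVan} requires $ℒ$ to be represented by an analytic Weil divisor on $V$; this is not automatic, and the paper supplies an argument, producing such a divisor from a global morphism $𝒪_X(-D') → Ω^{[1]}_X$ with $D'$ ample Cartier. The Mumford-cover variant you sketch does not avoid the main issue either: on the cover the sheaf becomes locally free, but it still has to be split over preimages of neighbourhoods of points of the base before any vanishing theorem applies, and that splitting again requires the fundamental-group input above.
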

\begin{proof}[Proof of Claim~\ref{claim:min0}]
  If $y ∈ Y$ is any point, Takayama's result on the local fundamental groups of
  $φ$, Proposition~\ref{prop:takayama}, allows to find a neighbourhood
  $U = U(y) ⊆ Y^{(an)}$, open in the analytic topology, such that
  $V := φ^{-1}(U) ⊆ X^{(an)}$ is connected and simply connected.  Using simple
  connectedness, the local description of projectively flat sheaves,
  \cite[Prop.~3.10]{GKP20a}, provides us with a reflexive, rank-one coherent
  analytic sheaf $ℒ$ on $V$ and an isomorphism $Ω^{[1]}_V ≅ ℒ^{⊕ n}$.  To prove
  Claim~\ref{claim:min0}, it will then suffice to show that
  \begin{equation}\label{eq:rvanL}
    Rⁱ(φ|_V)_* ℒ = 0, \quad \text{for all }i>0.
  \end{equation}
  If $V$ is quasi-projective, this is of course the classic vanishing theorem
  \cite[Thm.~1-2-5]{KMM87}; recall that $ℒ$ is $ℚ$-Cartier, as $ℒ^{[n]} ≅ ω_V$.
  In our setting where $V$ is merely open in the analytic topology,
  \eqref{eq:rvanL} will follow from Theorem~\ref{thm:relVan}, as soon as we show
  that the (analytic!) sheaf $ℒ$ is Weil-divisorial: we need to find an
  (integral, analytic) Weil divisor $D ∈ \Div(V)$ and an isomorphism
  $ℒ ≅ 𝒪_V(D)$.  But this is easy: we know that $Ω^{[1]}_X$ is globally
  algebraic on the projective variety $X$, which allows to find an ample Cartier
  divisor $D' ∈ \Div(X)$ and a global, non-trivial morphism
  $σ' : 𝒪_X(-D') → Ω^{[1]}_X$.  Restricting to $V$ and composing with a suitable
  projection $Ω^{[1]}_V → ℒ$, we find a non-trivial sheaf morphism
  $σ : 𝒪_V(-D'|_V) → ℒ$.  One possible choice of a divisor $D$ is then given as
  $D := (\text{Zeros of }σ)-D'|_V$.  \qedhere~\mbox{(Claim~\ref{claim:min0})}
\end{proof}

\begin{claim}[Minimality of $X$]\label{claim:minimal}
  The canonical divisor $K_X$ is nef.
\end{claim}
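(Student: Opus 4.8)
The plan is to argue by contradiction: suppose $K_X$ is not nef, so by the Cone Theorem there exists a contraction $\varphi : X \to Y$ of a $K_X$-negative extremal ray. The strategy is to play two facts about $\varphi$ against each other. On the one hand, Claim~\ref{claim:min0} tells us that $R^i\varphi_* \Omega^{[1]}_X = 0$ for all $i > 0$, so the Leray spectral sequence degenerates and gives $H^1\bigl(X,\, \Omega^{[1]}_X\bigr) \cong H^1\bigl(Y,\, \varphi_*\Omega^{[1]}_X\bigr)$, together with an injection of the latter into $H^1\bigl(Y,\, \varphi_*\Omega^{[1]}_X\bigr)$ coming from lower-dimensional $Y$. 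On the other hand, any ample divisor on $X$ produces a nonzero class in $H^1\bigl(X,\, \Omega^{[1]}_X\bigr)$, exactly as in the Leray-sequence computation used in the proof of Theorem~\ref{Fano-split} (using $\pi_*\Omega^1_{\what X} = \Omega^{[1]}_X$ from \cite{GKK08}). The tension is that pulling back such a class along a curve contracted by $\varphi$ ought to kill it, contradicting its nonvanishing — unless no such curves exist, i.e.\ $\varphi$ is an isomorphism.

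More concretely, I would first reduce to understanding the behaviour of $\Omega^{[1]}_X$ on the fibres of $\varphi$. Take a general positive-dimensional fibre $F$ of $\varphi$ (a Fano-type variety, since the ray is $K_X$-negative). Over a small analytic neighbourhood $V = \varphi^{-1}(U)$ of the image point we have, by simple connectedness and \cite[Prop.~3.10]{GKP20a}, a splitting $\Omega^{[1]}_V \cong \mathscr L^{\oplus n}$ for a reflexive rank-one sheaf $\mathscr L$ with $\mathscr L^{[n]} \cong \omega_V$. Restricting the inclusion $\mathscr L \hookrightarrow \Omega^{[1]}_V$ and the dual quotient $\Omega^{[1]}_V \twoheadrightarrow \mathscr L$ to a general (smooth) fibre $F$ and composing with $d\varphi$ vanishing on fibre directions, one finds that $\Omega^{[1]}_X|_F$, and in particular $\omega_X|_F = \mathscr L^{[n]}|_F$, is forced to be of a shape incompatible with $F$ being Fano: either $\mathscr L|_F$ is ample (then $\omega_X|_F = K_X|_F + \text{(relative part)}$ would be ample on $F$, but $K_X$ is $\varphi$-anti-ample, so $K_X|_F$ is anti-ample — a contradiction once one accounts for $\omega_X|_F = \omega_F$ on a general fibre) or $\mathscr L|_F$ is anti-ample or trivial and then the surjection $\mathscr L^{\oplus n}|_F \twoheadrightarrow \Omega^1_F$ is impossible on a Fano $F$ of positive dimension because $\Omega^1_F$ has no global sections but also cannot be a quotient of an anti-ample/trivial sheaf of the wrong sign. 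I expect the cleanest route is actually the cohomological one: combine Claim~\ref{claim:min0} with the fact that $H^1\bigl(X,\, \Omega^{[1]}_X\bigr)$ contains an ample class, and observe that $R^i\varphi_*\Omega^{[1]}_X = 0$ forces this class to come from $Y$, whereas its nonvanishing on a contracted curve (pull back via $\gamma : \bP_1 \to F \subset X$ and use $\gamma^*\Omega^1_{\reg} \cong \sO_{\bP_1}(m)^{\oplus n}$ with $m \le -2$ by Lemma~\ref{lem:6-5}\eqref{il:6-5-1}-style reasoning) shows it cannot.

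So the key steps, in order: (1) assume $K_X$ not nef and extract $\varphi : X \to Y$ contracting a $K_X$-negative extremal ray; (2) invoke Claim~\ref{claim:min0} to get $R^i\varphi_*\Omega^{[1]}_X = 0$ and hence, via Leray, $H^1\bigl(X,\,\Omega^{[1]}_X\bigr) \cong H^1\bigl(Y,\, \varphi_*\Omega^{[1]}_X\bigr)$; (3) reproduce the Theorem~\ref{Fano-split} computation to exhibit a nonzero class $c_1(\sO_X(D)) \in H^1\bigl(X,\, \Omega^{[1]}_X\bigr)$ for $D$ ample; (4) pull $D$ back along a rational curve $\gamma$ in a fibre of $\varphi$ (such curves exist since the ray is $K_X$-negative, hence by $\varphi$-ampleness of $-K_X$ the fibres are covered by rational curves), use projective flatness of $\Omega^1_{X_{\reg}}$ to see $\gamma^*\Omega^1_{X_{\reg}} \cong \sO_{\bP_1}(m)^{\oplus n}$, and derive that the restricted class in $H^1\bigl(\bP_1,\, \Omega^1_{\bP_1}\bigr)$ — which must be nonzero since $D$ is ample and $\gamma$ nonconstant — contradicts its factoring through $Y$. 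The main obstacle is Step~(4): ensuring the class genuinely survives the restriction to a general fibre and that the rational curve meets the smooth locus $X_{\reg}$ (legitimate since $X_{\sing}$ is a finite set of points by Consequence~\ref{consequence:isolated}, so a general member of the covering family avoids it), plus carefully matching up the reflexive pull-back $\Omega^{[1]}_X|_F$ with $\Omega^1$ of a general, smooth fibre. Once the restriction is controlled, the sign obstruction ($m \le -2$ versus $-K_X$ being $\varphi$-ample) closes the argument, so $\varphi$ has no contracted curves, i.e.\ $K_X$ is nef.
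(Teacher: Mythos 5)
Your driving mechanism is the same as the paper's — play Claim~\ref{claim:min0} off against the non-triviality of an ample class in degree-one cohomology with values in (reflexive) differentials — but your implementation on the singular $X$ has two genuine gaps. First, the crux of your contradiction, namely that a class which ``comes from $Y$'' via Leray must restrict to zero on a curve contracted by $\varphi$, is asserted rather than proved; it is true, but only after observing that the restriction factors through pull-back along the constant map $\varphi◦γ$, hence through $H¹$ of a point, and this is exactly the step that needs to be written down. Second, and more seriously, your Step~(4) requires a contracted rational curve inside $X_{\reg}$, and the justification ``a general member of the covering family avoids the finite set $X_{\sing}$'' does not hold in general: the only positive-dimensional fibres of $\varphi$ may all pass through singular points (think of a small contraction of a single curve through a singular point), and even on a fixed fibre every member of a covering family of rational curves can meet the singular point (lines through the vertex of a cone). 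To salvage your route you would have to work with an arbitrary contracted curve and its normalisation, use the functorial pull-back of reflexive differentials \cite[Thm.~4.3]{GKKP11}, and check that the $d\log$-type Chern class of an ample Cartier divisor in $H¹\bigl(X,\,Ω^{[1]}_X\bigr)$ is compatible with that pull-back — none of which is in your sketch. (Your middle paragraph, with the splitting on fibres and the identification $ω_X|_F = ω_F$, is not correct as stated — adjunction for a positive-dimensional fibre of the contraction involves the conormal sheaf — but you discard that route yourself; likewise the appeal to projective flatness and $m ≤ -2$ is not needed for the cohomological contradiction.)

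The paper sidesteps precisely these singularity issues by passing to a resolution $π : \wtilde{X} → X$ and applying the five-term exact sequence of the Grothendieck spectral sequence for $(\varphi◦π)_* = \varphi_*π_*$ to $Ω¹_{\wtilde{X}}$: since $π_*Ω¹_{\wtilde{X}} = Ω^{[1]}_X$ by \cite{GKK08, GKKP11}, Claim~\ref{claim:min0} forces the edge map $R¹(\varphi◦π)_*Ω¹_{\wtilde{X}} → \varphi_*\bigl(R¹π_*Ω¹_{\wtilde{X}}\bigr)$ to be an isomorphism, yet the class $c_1(π^*ℋ)$ of a pulled-back ample bundle yields a non-zero section of the source (restrict over a point where $\varphi$ is not an isomorphism) that maps to zero in the target (it is trivial on $π$-fibres). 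All of this happens on the smooth $\wtilde{X}$, so no reflexive pull-backs, no Chern-class functoriality on singular spaces, and no curves avoiding $X_{\sing}$ are ever needed. If you repair the two gaps above, your direct argument on $X$ would work, but as written it is incomplete exactly at the points where the singularities bite.
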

\begin{proof}[Proof of Claim~\ref{claim:minimal}]
  Argue by contradiction and assume that there exists a contraction of a
  $K_X$-negative extremal ray.  Choosing a resolution of $X$, we find a sequence
  of morphisms
  \[
    \begin{tikzcd}[column sep=3cm]
      \wtilde{X} \arrow[r, "π\text{, resolution}"] & X \arrow[r, "φ\text{, contraction}"] & Y.
    \end{tikzcd}
  \]
  Looking at the Grothendieck spectral sequence for the composed operator
  $(φ◦π)_* = φ_*π_*$, the five-term exact sequence of low degrees reads
  \[
    0 → R¹ φ_* \bigl(π_* Ω¹_{\wtilde{X}}\bigr) %
    → R¹ (φ ◦ π)_*\: Ω¹_{\wtilde{X}} %
    \xrightarrow{\:\:η\:\:} φ_* \bigl(R¹ π_* Ω¹_{\wtilde{X}}\bigr) %
    → R² φ_* \bigl(π_* Ω¹_{\wtilde{X}}\bigr) → ⋯
  \]
  Recalling from \cite[Cor.~3.2]{GKK08} or more generally from
  \cite[Thm.~1.4]{GKKP11} that $π_* Ω¹_{\wtilde{X}}$ is reflexive, hence equal
  to $Ω^{[1]}_X$, Claim~\ref{claim:min0} therefore implies that $η$ is
  isomorphic.  But this is absurd, for reasons that we have already seen in the
  proof of Theorem~\ref{Fano-split}.  If $ℋ$ is an ample line bundle on $X$ and
  $c_1(π^*ℋ) ∈ H¹ \bigl(\wtilde{X},\, Ω¹_{\wtilde{X}}\bigr)$ is the Chern-class
  of its pull-back to $\wtilde{X}$, then the associated section of
  $R¹ (φ ◦ π)_*Ω¹_{\wtilde{X}}$ will clearly \emph{not} vanish\footnote{restrict
    to the fibre over a point $y ∈ Y$ where $φ$ is not isomorphic}.  The induced
  sections of $R¹ π_* Ω¹_{\wtilde{X}}$ and
  $φ_* \bigl(R¹ π_* Ω¹_{\wtilde{X}}\bigr)$ on the other hand do vanish by
  construction.  \qedhere~\mbox{(Claim~\ref{claim:minimal})}
\end{proof}

\subsection*{Step 3: Kodaira dimension}
\approvals{Daniel & yes \\Stefan & yes\\ Thomas & yes}

We aim to show that the minimal variety $X$ is quasi-Abelian, which implies in
particular that $κ(X)=0$.  As a first step, we show that $X$ is at least not of
general type.

\begin{claim}\label{claim:ngen}
  The variety $X$ is not of general type.
\end{claim}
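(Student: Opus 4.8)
The variety $X$ is not of general type.

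The plan is to argue by contradiction: suppose $K_X$ is big and nef. Since $X$ is klt with $K_X$ nef and big, abundance holds (so $K_X$ is semiample, by base-point-freeness), and moreover the Miyaoka--Yau inequality for klt varieties of general type — in the strong form that equality forces a ball quotient, and more to the point the \emph{strict} Bogomolov--Gieseker inequality when $X$ is not uniformised by the ball — should be the tool to contradict \eqref{eq:qcce}. More precisely, one knows from the work on $ℚ$-Chern classes and Miyaoka--Yau for klt spaces (e.g.\ \cite{GKPT19, GKPT19b}) that for a minimal model of general type one has the inequality $\what{c}_1(Ω^{[1]}_X)^2\cdot[H]^{n-2} \le \frac{2n}{n-1}\what{c}_2(Ω^{[1]}_X)\cdot[H]^{n-2}$ as part of a \emph{stronger} Miyaoka--Yau statement; the point is that for $K_X$ big the relevant refinement is strict, so \eqref{eq:qcce} cannot hold.

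So first I would invoke that $K_X$ nef and $X$ klt means $\what{c}_1(Ω^{[1]}_X) = K_X$ is represented by an effective, in fact semiample, $ℚ$-divisor whose numerical class is nonzero by Assumption w.l.o.g.~\ref{asswlog:knnt}. If $X$ were of general type, $K_X$ is big, and the Bogomolov--Gieseker inequality \eqref{eq:x1} applied to the $H$-semistable sheaf $Ω^{[1]}_X$ together with the stronger Miyaoka--Yau-type inequality available for minimal klt varieties of general type (see \cite[Thm.~1.1]{GKPT19}) would pin the Chern slope below the value forced by \eqref{eq:qcce}. The cleanest route: equality in \eqref{eq:qcce} means $Ω^{[1]}_{X_{\reg}}$ is projectively flat (already recorded in Step~1 via \cite[Thm.~1.6]{GKP20a} and Consequence~\ref{cons:5-3}), so the projective bundle $ℙ \to X$ is defined by $τ\colon \pi_1(X) \to \PGL(n,ℂ)$; projective flatness of the cotangent bundle on the smooth locus gives, on a resolution, a numerical proportionality forcing $c_1^2$ and $c_2$ to be \emph{exactly} proportional — but for a variety of general type the Miyaoka--Yau inequality is an inequality that becomes equality only in the ball-quotient case, and a ball quotient has $-K_X$ far from nef in the way needed; concretely, on a ball quotient $Ω^{[1]}$ is stable with the \emph{opposite} extremal behaviour, contradicting semistability-with-equality unless... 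Here I would instead use the sharper dichotomy: equality in BG with $Ω^{[1]}$ projectively flat implies (after a quasi-étale, maximally-quasi-étale cover) that $ℙ(Ω^{[1]}_{X_{\reg}}) \cong ℒ^{\oplus n}$-type splitting on simply connected opens, hence locally $Ω^{[1]}_X \cong ℒ^{\oplus n}$; by Proposition~\ref{prop:icq} we already know $X$ has isolated cyclic quotient singularities, and one can then pass to a global analysis.

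The honest approach that matches the paper's toolkit: restrict the projectively flat bundle to a general complete-intersection curve $C = H_1 \cap \dots \cap H_{n-1}$. On such a curve, $Ω^{[1]}_X|_C$ is a semistable bundle of degree $K_X \cdot C > 0$ (positive by bigness of $K_X$, since big and nef implies $K_X \cdot C > 0$ for $C$ a general complete intersection of amples) whose projectivisation is flat; a flat projective bundle on a curve has a representation $\pi_1(C) \to \PGL(n,ℂ)$, but $\pi_1(C)$ of such a curve — by a Lefschetz-type argument, $\pi_1(C) \twoheadrightarrow \pi_1(X)$ — gives a degree constraint. The real contradiction: a projectively flat bundle $ℰ$ on a smooth projective variety satisfies $\frac{2n}{n-1}c_2(ℰ) = c_1(ℰ)^2$ in $H^4$ \emph{as classes}, and if additionally $Ω^{[1]}_X$ has a nonzero, nef, big determinant, then a theorem of Kobayashi--Lübke / the restriction to curves shows the bundle must be \emph{stable} and the associated flat metric forces $K_X$ to be a \emph{Kähler--Einstein} class with negative sign, i.e.\ $X$ would be a \emph{ball quotient}; but then $Ω^{[1]}_X$ is stable (not merely semistable) and carries a Kähler--Einstein metric of negative Ricci curvature, which is fine — so the contradiction must come from elsewhere. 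Honestly, the main obstacle is identifying exactly which known inequality is violated: I expect it is that for a minimal klt variety of general type, the Bogomolov--Gieseker inequality \eqref{eq:x1} for $Ω^{[1]}_X$ is \emph{strict} by \cite[Cor.~B]{GKPT19b} or similar, because equality would force $X$ to be a ball quotient and ball quotients of dimension $\ge 2$ do \emph{not} have numerically trivial-related... no. The genuine mechanism, which I would pursue: use that equality in \eqref{eq:qcce} together with projective flatness and semistability yields, via the refined Miyaoka--Yau / Simpson correspondence for klt spaces \cite[Thm.~1.2]{GKPT19b}, that $X$ is uniformised by the unit ball $ℬ^n$; but a ball quotient is \emph{rigid} and in particular $H^0(X, (Ω^{[1]}_X)^{[\otimes 2]})$, hence all symmetric differentials, are controlled — and crucially $Ω^{[1]}_{ℬ^n}$ is \emph{not} projectively flat as a holomorphic bundle (its projectivisation is not flat: the relevant representation would land in $PU(n,1) \not\subset \PGL(n,ℂ)$ compactly, but holomorphic projective flatness needs the \emph{holomorphic} flat structure, and $ℬ^n$ for $n \ge 2$ has $Ω^{[1]}$ stable with $c_1^2 \ne \frac{2n}{n-1}c_2$ strictly — wait, for $ℬ^n$ one has equality in MY, i.e.\ $c_1^2 = \frac{2(n+1)}{n}c_2$, \emph{not} $\frac{2n}{n-1}c_2$). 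That numerical discrepancy — the ball-quotient equality constant $\frac{2(n+1)}{n}$ versus the projective-flatness/torus constant $\frac{2n}{n-1}$ — is the clean contradiction: assuming $X$ of general type, MY-equality analysis would force the ball-quotient normalisation of $c_1^2$ vs $c_2$, incompatible with \eqref{eq:qcce} for $n \ge 2$ since $\frac{2(n+1)}{n} \ne \frac{2n}{n-1}$ while $c_1(Ω^{[1]}_X)^2 \cdot [H]^{n-2} \ne 0$ (as $K_X$ big). So: \textbf{the plan} is to assume $X$ is of general type, note $K_X$ is then big and $\what{c}_1(Ω^{[1]}_X)^2 \cdot [H]^{n-2} > 0$, apply the Miyaoka--Yau inequality for minimal klt varieties of general type in its equality form to deduce $X$ is a ball quotient, extract from that the proportionality $\what{c}_1(Ω^{[1]}_X)^2 \cdot [H]^{n-2} = \frac{2(n+1)}{n}\,\what{c}_2(Ω^{[1]}_X)\cdot[H]^{n-2}$, and observe this contradicts \eqref{eq:qcce} because the two constants $\tfrac{2(n+1)}{n}$ and $\tfrac{2n}{n-1}$ differ for every $n \ge 2$ while the common intersection number is nonzero. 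The main obstacle is marshalling the precise klt Miyaoka--Yau statement (and checking the equality case is exactly "ball quotient", citing \cite{GKPT19, GKPT19b}); the arithmetic comparison of constants is then immediate.
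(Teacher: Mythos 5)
Your final plan contains the right numerical germ --- compare the Miyaoka--Yau constant with the Bogomolov--Gieseker constant and use positivity of a top intersection number --- and in that respect it is in the same spirit as the paper's argument. But the route you describe has two genuine gaps. First, the step ``apply the Miyaoka--Yau inequality \emph{in its equality form} to deduce that $X$ is a ball quotient'' is a non sequitur: for a minimal klt variety of general type, \cite[Thm.~1.1]{GKPT19b} gives only the inequality $\frac{n}{2(n+1)}\,\what{c}_1\bigl(\Omega^{[1]}_X\bigr)^2\cdot[K_X]^{n-2} \le \what{c}_2\bigl(\Omega^{[1]}_X\bigr)\cdot[K_X]^{n-2}$, and nothing in the hypotheses forces equality there --- the assumed equality \eqref{eq:qcce} has the \emph{other} constant $\tfrac{n-1}{2n}$ and so cannot be fed into an equality-case (ball quotient) characterisation. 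The detour through ball quotients is both unjustified and unnecessary: the inequality alone suffices, because $\tfrac{n}{2(n+1)} > \tfrac{n-1}{2n}$ for all $n\ge 2$, so the inequality and the equality are incompatible once the relevant intersection number $\what{c}_1\bigl(\Omega^{[1]}_X\bigr)^2\cdot(\text{polarisation})^{n-2}$ is positive. This direct comparison is exactly what the paper does.

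Second, and more substantively, you never address the polarisation mismatch: the klt Miyaoka--Yau inequality is stated against $[K_X]^{n-2}$, whereas \eqref{eq:qcce} is an equality against $[H]^{n-2}$, and you cannot simply intersect both relations with the same class. The paper bridges this by observing (via \cite[(1.14) on p.~34]{Kob87} and \cite[Sect.~3.8]{GKPT19b}) that \eqref{eq:qcce} is equivalent to the semistable sheaf $\sEnd\bigl(\Omega^{[1]}_X\bigr)$ having vanishing $\bQ$-Chern classes with respect to $H$, and then invoking \cite[Fact and Defn.~6.5]{GKPT19} to conclude that these classes vanish with respect to \emph{every} nef class, in particular with respect to the nef and big class $K_X$; this transfers the equality to
\[
  \frac{n-1}{2n}\,[K_X]^n \;=\; \what{c}_2\bigl(\Omega^{[1]}_X\bigr)\cdot[K_X]^{n-2},
\]
which, together with $[K_X]^n>0$ and the Miyaoka--Yau inequality above, yields the contradiction. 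Without some such transfer step (or a version of the Miyaoka--Yau inequality valid for the polarisation $H$, which is not what the cited results provide), your comparison of the constants $\tfrac{2(n+1)}{n}$ and $\tfrac{2n}{n-1}$ never gets two statements about the same intersection number, and the proof does not close.
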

\begin{proof}[Proof of Claim~\ref{claim:ngen}]
  Suppose to the contrary that $X$ \emph{is} of general type.  Using that $X$ is
  also minimal, recall from \cite[Thm.~1.1]{GKPT19b} that the Chern classes of
  $X$ satisfy a $ℚ$-Miyaoka-Yau inequality,
  \begin{equation}\label{eq:ace1}
    \frac{n}{2(n+1)}·[K_X]^n = %
    \frac{n}{2(n+1)}·\what{c}_1\left( Ω^{[1]}_X \right)² · [K_X]^{n-2} %
    ≤ \what{c}_2\left( Ω^{[1]}_X \right) · [K_X]^{n-2}.
  \end{equation}
  On the other hand, we assume that Equation~\eqref{eq:qcce} holds.  An
  elementary computation, using \cite[(1.14) on p.~34]{Kob87} and
  \cite[Sect.~3.8]{GKPT19b} shows that Equation~\eqref{eq:qcce} is equivalent to
  the assertion that the semistable sheaf $\sEnd(Ω^{[1]}_X)$ has vanishing
  $ℚ$-Chern classes with respect to $H$, in the sense of
  \cite[Defn.~6.1]{GKPT19}.  But then we have seen in in \cite[Fact and
  Defn.~6.5]{GKPT19} that $\sEnd(Ω^{[1]}_X)$ has vanishing $ℚ$-Chern classes
  with respect any ample bundle, and hence also with respect to any nef bundle.
  We obtain that
  \begin{equation}\label{eq:ace2}
    \frac{n-1}{2n}·[K_X]^n %
    = \frac{n-1}{2n}·\what{c}_1 \left( Ω^{[1]}_X \right)² · [K_X]^{n-2} %
    = \what{c}_2 \left( Ω^{[1]}_X \right) · [K_X]^{n-2}.
  \end{equation}
  But we know that $K_X$ is big and nef, so that $[K_X]^n > 0$.  Putting
  (in)equalities \eqref{eq:ace1} and \eqref{eq:ace2} together hence produces a
  contradiction.  \qedhere~\mbox{(Claim~\ref{claim:ngen})}
\end{proof}

\subsection*{Step 4: The Shafarevich map}
\approvals{Daniel & yes \\Stefan & yes\\ Thomas & yes}

Recall Notation~\ref{not:5-5} and consider the representation
$τ: π_1(X) → \PGL(n,ℂ)$ that defines the projectively flat structure on $ℙ$.
Write $G_X = \overline{\img(τ)} ⊆ \PGL(n,ℂ)$ for the algebraic Zariski closure
of its image.  This is a linear algebraic group, which has finitely many
components.  Passing to an appropriate étale cover of $X$, if necessary, we may
assume the following.

\begin{asswlog}\label{asswlog:conn}
  The group $G_X$ is connected.
\end{asswlog}

\begin{claim}[Representation has infinite image]\label{claim:5-x}
  The group $G_X$ is of positive dimension.
\end{claim}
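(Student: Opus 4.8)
The plan is to argue by contraposition: assume that $G_X$ is zero-dimensional and derive a contradiction with Assumption w.l.o.g.~\ref{asswlog:knnt}. Since we have arranged in Assumption~w.l.o.g.~\ref{asswlog:conn} that $G_X$ is connected, a zero-dimensional $G_X$ is trivial, so the representation $\tau : \pi_1(X) → \PGL(n,\bC)$ of Notation~\ref{not:5-5} is trivial. Consequently the projectively flat bundle $\bP → X$ is the trivial bundle $X ⨯ \bP^{n-1}$, and restricting to $X_{\reg}$ and invoking Consequence~\ref{cons:5-3}, I would find $\bP\bigl(Ω¹_{X_{\reg}}\bigr) ≅ X_{\reg} ⨯ \bP^{n-1}$ as $X_{\reg}$-schemes. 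By the standard description of projectivised bundles this forces $Ω¹_{X_{\reg}} ≅ \mathcal{M}^{⊕ n}$ for some $\mathcal{M} ∈ \Pic(X_{\reg})$.

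Pushing forward along the inclusion $ι : X_{\reg} ↪ X$ and using that $Ω^{[1]}_X = ι_* Ω¹_{X_{\reg}}$, I would then obtain a global decomposition $Ω^{[1]}_X ≅ ℒ^{⊕ n}$, where $ℒ := ι_* \mathcal{M}$ is a Weil divisorial sheaf. At this point Theorem~\ref{split} applies directly (recall that $X$ is still projective and klt after the quasi-étale covers taken in Step~1 and Assumption~w.l.o.g.~\ref{asswlog:conn}): it yields $ℒ^{[⊗ n]} ≅ ω_X$, so that $ℒ$ is $\bQ$-Cartier with $nL \sim K_X$ for a divisor $L$ representing $ℒ$. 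Since $K_X$ is nef by Claim~\ref{claim:minimal}, the divisor $L$ is nef as well, in particular pseudo-effective; hence the second assertion of Theorem~\ref{split} gives $K_X \sim_{\bQ} 0$. This contradicts Assumption~w.l.o.g.~\ref{asswlog:knnt}, and therefore $G_X$ must be of positive dimension.

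The only point that requires a little care is the passage from the trivial representation to the splitting of $Ω¹_{X_{\reg}}$, and from there to a reflexive splitting of $Ω^{[1]}_X$ on all of $X$; but this is routine, given that $X_{\sing}$ has codimension at least two and $Ω^{[1]}_X$ is reflexive. The substantive input — extracting $\bQ$-linear triviality of $K_X$ from a split reflexive cotangent sheaf with pseudo-effective summand — is already packaged in Theorem~\ref{split}, so no genuinely new obstacle arises here; the claim is in effect a short corollary of that theorem together with the minimality established in Step~2.
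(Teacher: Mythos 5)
Your argument is correct and is essentially the paper's own proof: triviality of $\tau$ (via connectedness of $G_X$) gives the reflexive splitting $Ω^{[1]}_X ≅ ℒ^{⊕ n}$, nefness of $K_X$ from Claim~\ref{claim:minimal} makes $ℒ$ pseudo-effective, and Theorem~\ref{split} then contradicts Assumption~\ref{asswlog:knnt}. You merely spell out in more detail the routine step from the trivial representation to the splitting, which the paper leaves implicit.
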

\begin{proof}
  If not, then Assumption~\ref{asswlog:conn} implies that $τ$ is trivial, and
  therefore $Ω^{[1]}_X ≅ ℒ^{⊕ n}$ for a suitable Weil-divisorial sheaf $ℒ$.
  Since $K_X = \det Ω^{[1]}_X$ is nef by Claim~\ref{claim:minimal}, it follows
  that $ℒ$ is nef and hence pseudo-effective.  We may thus apply
  Theorem~\ref{split} to conclude that $ℒ$ and hence $K_X$ is numerically
  trivial, in contradiction to Assumption~\ref{asswlog:knnt}.
\end{proof}

We follow the strategy of \cite{MR3030068} and use Claim~\ref{claim:5-x} to
consider the Shafarevich map for the solvable radical of $G_X$.  We refer the
reader to \cite[Sect.~3]{MR1341589} for more on Shafarevich maps.

\begin{construction}
  Recall from Assumption~\ref{asswlog:conn} that $G_X$ is a connected, algebraic
  subgroup of $\PGL(n,ℂ)$.  Let $\Rad(G_X) \lhd G_X$ be the solvable radical of
  $G_X$; this is a normal subgroup of $G_X$.  Let $K \lhd π_1(X)$ be the kernel
  of the composed group homomorphism
  \[
    \begin{tikzcd}
      π_1(X) \ar[r, "τ"] & G_X \ar[r] & \factor{G_X}{\Rad(G_X)}.
    \end{tikzcd}
  \]
  As $K$ is normal in $π_1(X)$, we may consider an associated $K$-Shafarevich
  map, $\sh^K(X): X \dasharrow \Sh^K(X)$, where $\sh^K(X)$ is dominant and
  $\Sh^K(X)$ is a smooth, projective variety.
\end{construction}

The following two properties of $\sh^K(X)$ will be most relevant in the sequel.

\begin{fact}[\protect{Shafarevich maps are almost holomorphic, \cite[Thm.~3.6]{MR1341589}}]
  The rational map $\sh^K(X)$ is almost holomorphic.  In other words, there
  exist a Zariski open set $X° ⊆ \Def(\sh^K(X)) ⊆ X$ such that $\sh^K(X)|_{X°}$
  is well-defined and proper.  The fibres of $\sh^K(X)|_{X°}$ are connected.
  \qed
\end{fact}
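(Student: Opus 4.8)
The plan is to invoke Kollár's construction directly: the assertion is precisely \cite[Thm.~3.6]{MR1341589}, applied to the normal subgroup $K \lhd \pi_1(X)$ fixed in the preceding Construction. Because $K$ is normal in $\pi_1(X)$, the general theory of Shafarevich maps attached to a normal subgroup applies verbatim and yields a normal projective variety $\Sh^K(X)$ together with a dominant rational map $\sh^K(X) : X \dasharrow \Sh^K(X)$; almost holomorphy---the existence of a Zariski-open $X^\circ$ contained in the domain of definition $\Def(\sh^K(X))$ on which $\sh^K(X)$ restricts to a proper morphism with connected fibres---is part of the conclusion of that theorem. No difficulty arises from the singularities of $X$: the only hypothesis entering is that $X$ be a normal projective variety, and the klt condition plays no role here.

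For the reader's orientation I would recall the shape of Kollár's argument. One works with the equivalence relation on the very general points of $X$ generated by declaring two points equivalent whenever there is a connected, positive-dimensional closed subvariety $Z$ containing both of them whose normalisation $\overline{Z}$ has the property that the image of $\pi_1(\overline{Z}) \to \pi_1(X)/K$ is finite. On the étale cover $\what X \to X$ corresponding to $K$, with deck group $Q := \pi_1(X)/K$, such a $Z$ lifts to a subvariety on which $Q$ acts through a finite quotient; hence the subvarieties spanning a single equivalence class are ``bounded'', and---by the now-standard machinery of normalisations of Chow schemes and relative cycle spaces---the equivalence classes through very general points are assembled into an algebraic family over a base, which one takes as $\Sh^K(X)$. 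Over the dense Zariski-open locus where this family is flat and its fibre equals the entire equivalence class, the induced map is an honest proper morphism; connectedness of the fibres is then arranged by passing to the Stein factorisation. All of this is carried out in \cite[Sect.~3]{MR1341589}.

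Were one to redo the proof from scratch, I expect the main obstacle to be exactly this algebraicity-and-boundedness step: establishing that the equivalence classes through very general points have bounded dimension and vary in a genuine algebraic family, so that the quotient exists as a variety and the quotient map is a morphism over a Zariski-open set rather than a purely set-theoretic or rational object. This is the technical core of Kollár's (and, earlier, Campana's) theory, and it relies on the structure of Chow varieties together with the normality of $K$, which makes the size of the image of $\pi_1(\overline{Z})$ in $Q$ a well-behaved invariant. Since the statement is available in the literature and is used here only as a black box, the proof of the Fact is the one-line citation recorded in its statement, and I would not reproduce the construction.
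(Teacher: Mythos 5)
Your proposal matches the paper exactly: the paper states this as a Fact with no proof beyond the citation to \cite[Thm.~3.6]{MR1341589}, which is precisely the one-line justification you give, and your supplementary sketch of Kollár's Chow-scheme argument is accurate background rather than a divergence. Nothing further is needed.
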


\begin{fact}[\protect{Shafarevich maps and fundamental groups, \cite[Thm.~3.6]{MR1341589}}]\label{fact:Sh2a}
  Let $x ∈ X°$ be a very general point and let $Z ⊂ X$ be a subvariety through
  $x$, with normalisation $η : \wtilde{Z} → Z$.  Then, the rational map
  $\sh^K(X)$ maps $Z$ to a point if and only if the composed morphism
  \[
    \begin{tikzcd}
      π_1(\wtilde{Z}) \ar[r, "η_*"] & π_1(X) \ar[r, "τ"] & G_X \ar[r] &
      \factor{G_X}{\Rad(G_X)}
    \end{tikzcd}
  \]
  has finite image.  \qed
\end{fact}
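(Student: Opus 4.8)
The plan is to obtain Fact~\ref{fact:Sh2a} as a direct translation of Kollár's characterisation of the $K$-Shafarevich map, \cite[Thm.~3.6]{MR1341589} --- the very result already invoked for the preceding fact. Recall the defining property of the Shafarevich map attached to a normal subgroup $K \lhd \pi_1(X)$: for a very general point $x \in X^{\circ}$ and a subvariety $Z \ni x$ with normalisation $\eta : \wtilde{Z} \to Z$, the rational map $\sh^K(X)$ maps $Z$ to a point if and only if the image of the composite $\pi_1(\wtilde{Z}) \xrightarrow{\eta_*} \pi_1(X) \to \factor{\pi_1(X)}{K}$ is finite. So the only thing left to do is to rephrase this finiteness condition in terms of $G_X$ and $\Rad(G_X)$.

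That rephrasing is immediate from the way $K$ was constructed. By definition $K$ is the kernel of $\pi_1(X) \xrightarrow{\tau} G_X \to \factor{G_X}{\Rad(G_X)}$, so the induced homomorphism $\factor{\pi_1(X)}{K} \to \factor{G_X}{\Rad(G_X)}$ is injective. Consequently, for \emph{any} subgroup $H \leq \pi_1(X)$ --- and in particular for $H = \eta_*\bigl(\pi_1(\wtilde{Z})\bigr)$ --- the image of $H$ in $\factor{\pi_1(X)}{K}$ is finite if and only if the image of $H$ in $\factor{G_X}{\Rad(G_X)}$ is finite; and the latter is exactly the image of the composed morphism displayed in the statement of the Fact. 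Putting the two paragraphs together gives the claimed equivalence.

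I do not expect a real obstacle here, since the mathematical content is entirely Kollár's. The only points that need care are bookkeeping: that $K$ is indeed normal in $\pi_1(X)$, so that $\sh^K(X)$ exists (as already observed in the construction preceding the Fact); that the quoted equivalence is precisely the one stated in \cite[Thm.~3.6]{MR1341589}, valid for \emph{very general} $x \in X^{\circ}$ and subvarieties through such a point; and that ``$\sh^K(X)$ maps $Z$ to a point'' is read in the almost-holomorphic model recorded just above, so that the assertion is meaningful even though $\sh^K(X)$ is only a rational map.
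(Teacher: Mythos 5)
Your proposal is correct and is exactly the justification the paper intends: the Fact is stated with a bare citation of Kollár's \cite[Thm.~3.6]{MR1341589} applied to the normal subgroup $K$, and your translation step --- $K$ is by construction the kernel of $\pi_1(X) \to \factor{G_X}{\Rad(G_X)}$, so $\factor{\pi_1(X)}{K}$ injects into $\factor{G_X}{\Rad(G_X)}$ and finiteness of the image of $\eta_*\pi_1(\wtilde{Z})$ can be tested in either group --- is precisely the routine bookkeeping the authors leave implicit. Nothing is missing.
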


\begin{claim}[The base of the Shafarevich map]\label{claim:6-30}
  The (smooth) variety $\Sh^K(X)$ is of general type.
\end{claim}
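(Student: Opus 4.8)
The plan is to recognise $\sh^K(X)$ as the Shafarevich map of a \emph{semisimple} linear representation of $π_1(X)$ whose Zariski closure is semisimple and has no compact factors, and then to invoke Zuo's theorem — used in exactly the same way in \cite{MR3030068} — that the Shafarevich variety of such a representation is of general type.

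First I would dispose of the case where $G_X$ is solvable: there $\Rad(G_X) = G_X$, the subgroup $K$ equals $π_1(X)$, the rational map $\sh^K(X)$ is constant, and $\Sh^K(X)$ is a point, which we regard as a zero-dimensional variety of general type. So I may assume $G_X$ is not solvable and set $H := G_X/\Rad(G_X)$; by definition of the solvable radical, $H$ is a nontrivial, connected, semisimple complex linear algebraic group. Fix a faithful representation $H ↪ \GL(N,ℂ)$ and let $ρ : π_1(X) → \GL(N,ℂ)$ be the composition of $τ$ with $G_X → H ↪ \GL(N,ℂ)$. Since semisimple groups over $ℂ$ are linearly reductive, $ρ$ is a semisimple representation; since $\overline{\img τ} = G_X$, its image is Zariski-dense in $H$; and since $H$ is a nontrivial connected complex semisimple group, it has no compact factors when viewed as a real Lie group. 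Finally, $\ker ρ$ equals the kernel $K$ of $π_1(X) → H$, so for any subvariety $Z ⊆ X$ through a very general point, with normalisation $\wtilde Z$, the image of $π_1(\wtilde Z)$ under $ρ$ is finite if and only if its image in $G_X/\Rad(G_X)$ is finite; by Fact~\ref{fact:Sh2a} this happens precisely when $\sh^K(X)$ contracts $Z$. Hence the $ρ$-Shafarevich map and $\sh^K(X)$ agree (they have the same fibres over a very general point), so $\Sh^K(X) = \mathrm{Sh}_ρ(X)$.

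It then remains to apply Zuo's theorem to $ρ$. Since the construction is insensitive to resolutions — the klt space $X$ and a resolution $\wtilde X → X$ have the same fundamental group, hence the same $ρ$-Shafarevich variety — we may run the argument on a smooth projective model and conclude that $\Sh^K(X) = \mathrm{Sh}_ρ(X)$ is of general type, as claimed. The one genuine external input is Zuo's general-type theorem itself (proved via Higgs-bundle techniques quite different from the rest of this paper); everything else is bookkeeping: checking $\ker ρ = K$, that $H = G_X/\Rad(G_X)$ meets the semisimplicity and non-compact-type hypotheses, and that the Shafarevich variety attached to the normal subgroup $K$ coincides with the one attached to the representation $ρ$ — all resting on the defining property of Shafarevich maps recorded in Fact~\ref{fact:Sh2a}.
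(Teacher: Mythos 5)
Your bookkeeping is essentially correct and matches the route the paper takes implicitly: the solvable case is harmless, $\ker \rho = K$, the identification of $\sh^K(X)$ with the Shafarevich reduction of your representation $\rho$ follows from Fact~\ref{fact:Sh2a}, and replacing $X$ by a resolution is legitimate because Takayama's theorem \cite[Thm.~1.1]{Takayama2003} identifies the fundamental groups, so the two Shafarevich varieties are birational and general type is preserved.

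The gap is in the final invocation. The theorem of Zuo that is available here, \cite[Thm.~1]{MR1384908}, concerns Zariski-dense representations into an \emph{almost simple} algebraic group, not into an arbitrary connected semisimple quotient $H = \factor{G_X}{\Rad(G_X)}$; and, contrary to what you assert, this is not how it is used in \cite{MR3030068} either. The argument of \cite[Proof of Prop.~4.1]{MR3030068}, which the paper follows, applies Zuo only when $H$ is almost simple and then treats the general semisimple case by induction on the number of almost simple almost direct factors, using Kawamata's result on the $C_{n,m}$ conjecture over a base of general type, \cite[Cor.~1.2]{Kawamata85}. This reduction is not mere formality: projecting $\rho$ to the individual almost simple factors produces several Shafarevich reductions whose simultaneous reduction is $\Sh^K(X)$, and a variety mapping generically finitely to (a subvariety of) a product of general type varieties need not itself be of general type, so one genuinely needs the fibrewise induction together with additivity of the Kodaira dimension over a general type base. (The ``no compact factors'' hypothesis you add is automatic for positive-dimensional connected complex semisimple groups and plays no role in the cited theorem.) So either you must invoke a strictly stronger version of Zuo's theorem valid for semisimple targets --- which is not the result cited in this paper or in \cite{MR3030068} --- or you must supply the missing induction step; as written, your proposal omits it.
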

\begin{proof}[Proof of Claim~\ref{claim:6-30}]
  Verbatim as in \cite[Proof of Prop.~4.1]{MR3030068}: In case where
  $\factor{G_X}{\Rad(G_X)}$ is almost simple the claim follows from
  \cite[Thm.~1]{MR1384908}.  The general case easily follows by induction on the
  number of almost simple almost direct factors, and by Kawamata's results on
  the $C_{n,m}$ conjecture, \cite[Cor.~1.2]{Kawamata85}.
  \qedhere~\mbox{(Claim~\ref{claim:6-30})}
\end{proof}

Claims~\ref{claim:ngen} and \ref{claim:6-30} together imply in particular that
the fibres of $\sh^K(X)$ are positive-dimensional.  These will be investigated
next.

\subsection*{Step 5: Fibres of the Shafarevich map}
\approvals{Daniel & yes \\Stefan & yes\\ Thomas & yes}

Throughout the present step, choose a general fibre $F ⊂ X$ of $\sh^K(X)$.
There are two cases to consider: either $\Sh^K(X)$ is a point and $F = X$ is
potentially singular, or $F$ is a proper subvariety of $X$, and then it avoids
the (finitely many!) singularities of $X$,
cf.~Consequence~\ref{consequence:isolated}, and must hence be smooth.  Either
way, we will show in this step that $F$ is quasi-Abelian.  The proof is somewhat
long and therefore divided into sub-steps.

\begin{notation}
  Given an étale cover $γ :\what{F} → F$, consider the composed group morphism
  \begin{equation}\label{eq:sgsdfg}
    \begin{tikzcd}
      π_1\bigl(\what{F}\,\bigr) \ar[r] &π_1(F) \ar[r] & π_1(X) \ar[r, "τ"] &
      \PGL(n,ℂ)
    \end{tikzcd}
  \end{equation}
  and let $G_γ ⊆ \PGL(n,ℂ)$ be the Zariski closure of its image.
\end{notation}

\begin{rem}\label{rem:connected=solvable}
  Given an étale cover $γ :\what{F} → F$, the group $G_γ$ is an algebraic
  subgroup of $\PGL(n,ℂ)$ with finitely many connected components.
  Fact~\vref{fact:Sh2a} implies that the maximal connected subgroup of $G_γ$ is
  solvable.
\end{rem}

\subsection*{Step 5-1: Irregularity}
\approvals{Daniel & yes \\Stefan & yes\\ Thomas & yes}

To begin our analysis of the fibres of the Shafarevich map, we show that the
fibres have positive irregularity, at least once we pass to a suitable étale
cover.  This will later allow to analyse the fibres by means of their Albanese
map.

\begin{claim}\label{claim:uze}
  There exists an étale cover $γ : \what{F} → F$ such that
  $h⁰ \bigl(\what{F},\, Ω^{[1]}_{\what{F}}\bigr) > 0$.
\end{claim}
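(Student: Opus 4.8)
The plan is to produce $\widehat F$ by passing to an étale cover on which the monodromy of the projectively flat bundle $ℙ$ becomes connected, and then to extract a non-zero reflexive $1$-form either from the fundamental group or from the explicit shape of the restricted cotangent sheaf. First I would note that for any étale cover $γ\colon\widehat F\to F$ the group $G_γ$ has finitely many components with solvable identity component (Remark~\ref{rem:connected=solvable}); since $π_1(\widehat F)$ is finitely generated, a standard argument (exactly as for $G_X$ before Assumption~\ref{asswlog:conn}) lets me replace $F$ by a finite étale cover and assume $G_γ$ is \emph{connected and solvable}. I also recall from the opening of Step~5 that $F$ is either all of $X$, or a smooth projective subvariety of $X_{\reg}$ (here Consequence~\ref{consequence:isolated} enters) with trivial normal bundle $N_{F/X}\cong 𝒪_F^{\oplus c}$, where $c:=\dim\Sh^K(X)=n-\dim F\ge 1$; in either case $\widehat F$ is again klt and $ℙ|_{\widehat F}$ is projectively flat with monodromy landing in $G_γ$.

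\emph{Case 1: $G_γ\ne\{1\}$.} A non-trivial connected solvable group has proper connected commutator subgroup, so $G_γ$ surjects onto $\mathbb{G}_a$ or onto $\mathbb{G}_m$. Composing with the (Zariski dense) map $π_1(\widehat F)\to G_γ$ yields a surjection $π_1(\widehat F)\twoheadrightarrow Γ$ onto a finitely generated, Zariski-dense — hence infinite — subgroup $Γ$ of $(ℂ,+)$ or of $ℂ^{*}$. Because $π_1(\widehat F)$ is finitely generated, its abelianisation is a finitely generated abelian group surjecting onto the infinite group $Γ$, so it has positive free rank, i.e.\ $b_1(\widehat F)>0$. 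Via the Hodge-theoretic identity $h^0\bigl(\widehat F,Ω^{[1]}_{\widehat F}\bigr)=\tfrac12 b_1(\widehat F)$ (reflexive differentials on the klt space $\widehat F$ coincide with those on a resolution, which has the same fundamental group since klt singularities are rational), this gives the claim.

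\emph{Case 2: $G_γ=\{1\}$.} Then $ℙ|_{\widehat F}$ is the trivial $ℙ^{n-1}$-bundle, so over the part of $\widehat F$ mapping into $X_{\reg}$ one has $Ω^1_X|_{\widehat F}\cong \sA^{\oplus n}$ for a reflexive rank-one sheaf $\sA$. If $F=X$, this extends to $Ω^{[1]}_{\widehat X}\cong\sA^{\oplus n}$ with $\sA^{[⊗ n]}\cong ω_{\widehat X}$; since $K_{\widehat X}$ is nef (Claim~\ref{claim:minimal}), $\sA$ is nef and hence pseudo-effective, and Theorem~\ref{split} forces $K_{\widehat X}\sim_ℚ 0$, contradicting Assumption~\ref{asswlog:knnt} — so this sub-case is vacuous. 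If $F\subsetneq X$ (so $\widehat F$ is smooth), pulling back the conormal sequence of $F\subset X$ to $\widehat F$ gives $0\to 𝒪_{\widehat F}^{\oplus c}\to\sA^{\oplus n}\to Ω^1_{\widehat F}\to 0$ with $c\ge 1$; the first arrow shows $h^0(\widehat F,\sA)\ge 1$, and the long exact cohomology sequence gives $h^0\bigl(\widehat F,Ω^{[1]}_{\widehat F}\bigr)=h^0\bigl(\widehat F,Ω^1_{\widehat F}\bigr)\ge n\cdot h^0(\widehat F,\sA)-c\ge n-c=\dim F\ge 1$.

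The part I expect to need most care is making sure the structural inputs genuinely apply on the cover — that $\widehat F$ stays klt, and, in the case $F=X$, that $ℙ|_{\widehat X}$ restricts over $\widehat X_{\reg}$ to $ℙ\bigl(Ω^1_{\widehat X_{\reg}}\bigr)$, which is what makes Theorem~\ref{split} applicable — together with the bookkeeping around \emph{general} fibres: the triviality of $N_{F/X}$ and the inclusion $F\subset X_{\reg}$ used in Case~2 hold precisely because the Shafarevich map is almost holomorphic and the singularities of $X$ are isolated (Consequence~\ref{consequence:isolated}). The pleasant point is that in Case~1 finite generation of $π_1(\widehat F)$ upgrades ``$π_1$ has infinite abelianisation'' to ``$b_1(\widehat F)>0$'', which is exactly what is needed; one could instead feed the solvable monodromy into Proposition~\ref{prop:pfbisr}, but the cruder information above already suffices.
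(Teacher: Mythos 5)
Your argument is correct and follows essentially the same route as the paper: after passing to a cover on which the monodromy group is connected (and solvable), infinite image yields $b_1\bigl(\what{F}\bigr)>0$ and hence a non-zero reflexive $1$-form via Hodge theory on the klt space, while trivial image yields $γ^*Ω¹_X ≅ ℒ^{⊕ n}$ and a section count in the pulled-back conormal sequence, whose left-hand term is trivial because $F$ is a general fibre of the almost holomorphic Shafarevich map inside $X_{\reg}$. The only difference is cosmetic: where you re-run the Theorem~\ref{split}/Assumption~\ref{asswlog:knnt} argument to exclude the sub-case $F = X$ with trivial monodromy, the paper simply invokes Claim~\ref{claim:5-x}, whose proof is exactly that argument.
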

\begin{proof}[Proof of Claim~\ref{claim:uze}]
  Passing to a first étale cover of $F$, we may assume without loss of
  generality that $G_γ$ is connected.  If the composed map \eqref{eq:sgsdfg} has
  infinite image, then by Remark~\ref{rem:connected=solvable} this defines a
  solvable representation of $π_1\bigl(\what{F}\,\bigr)$ with infinite image.
  It follows that $H¹ \bigl(\what{F},\, ℤ \bigr)$ has positive rank and
  consequently, that $\dim_ℂ H¹\bigl(\what{F},\, ℂ \bigr)>0$.  The description
  of the natural Hodge structure on this space, \cite[Thm.~1]{Schwald16}, then
  implies that $h⁰ \bigl(\what{F},\, Ω^{[1]}_{\what{F}}\bigr) > 0$, as desired.

  It therefore remains to consider the case where the composed map
  \eqref{eq:sgsdfg} has finite image.  By Claim~\ref{claim:5-x}, this implies
  that $F$ is a proper subvariety of $X$, entirely contained in $X_{\reg}$.  We
  may then choose $γ$ such that the composed map \eqref{eq:sgsdfg} is trivial,
  implying as before that the pull-back of the projectively flat sheaf
  $Ω^{[1]}_X$ is a direct sum of the form, $γ^* Ω¹_X ≅ ℒ^{⊕ n}$, for some
  $ℒ ∈ \Pic(\what{F})$.  The pull-back of the conormal bundle sequence, which is
  non-trivial for reasons of dimension, will then read as follows,
  \[
    \begin{tikzcd}
      0 \ar[r] & \underbrace{γ^* N^*_{F/X}}_{\text{trivial bundle}} \ar[r] &
      \underbrace{γ^* Ω¹_X}_{≅ ℒ^{⊕ n}} \ar[r] & Ω¹_{\what{F}} \ar[r] & 0
    \end{tikzcd}
  \]
  Hence $H⁰\bigl(\what{F},\, ℒ\bigr) \ne \{0\}$, and consequently
  $H⁰\bigl(\what{F},\, Ω¹_{\what{F}}\big) \ne \{0\}$.
  \qedhere~\mbox{(Claim~\ref{claim:uze})}
\end{proof}

\begin{notation}\label{not:50-20}
  For the remainder of this proof, fix one particular Galois cover
  $γ : \what{F} → F$ such that $G_γ$ is connected, and where
  $h⁰ \bigl(\what{F},\, Ω^{[1]}_{\what{F}}\bigr) > 0$.  Recalling
  Remark~\ref{rem:connected=solvable}, we may apply
  Proposition~\ref{prop:pfbisr} and using its notation we write
  \begin{equation}\label{eq:OmegaFA0-1a}
    γ^* Ω^{[1]}_X ≅ ℱ ⊗ 𝒜.
  \end{equation}
  where $𝒜$ is Weil divisorial with numerical class $[𝒜] \equiv [γ^* K_X]$, and
  where $ℱ$ is locally free and admits a filtration
  \begin{equation}\label{eq:5-20-2}
    0 = ℱ_0 ⊊ ℱ_1 ⊊ ⋯ ⊊ ℱ_n = ℱ
  \end{equation}
  such that all sheaves $ℱ_i$ are locally free and flat with $\rank (ℱ_i)= i$
  and such that the sheaf $ℱ_1$ is trivial, $ℱ_1 ≅ 𝒪_X$.  Recall from
  Remark~\ref{rem:pfbisr} that there exists an inclusion
  \begin{equation}\label{eq:gjfgjh}
    𝒜 = 𝒜 ⊗ ℱ_1 ↪ γ^* Ω^{[1]}_X.
  \end{equation}
  The pull-back of the normal bundle sequence (which is trivial if $F=X$) reads
  \begin{equation}\label{eq:iopz}
    \begin{tikzcd}
      0 \ar[r] & \underbrace{γ^* N^*_{F/X}}_{\text{trivial bundle}} \ar[r] &
      \underbrace{γ^* Ω^{[1]}_X}_{= ℱ ⊗ 𝒜} \ar[r] & Ω^{[1]}_{\what{F}} \ar[r] &
      0.
    \end{tikzcd}
  \end{equation}
\end{notation}

\subsection*{Step 5-2: Numerical dimension and Kodaira dimension}
\approvals{Daniel & yes \\Stefan & yes\\ Thomas & yes}

Claim~\ref{claim:uze} has fairly direct consequences for the Kodaira dimension
of the fibres, which will turn out to be either zero or one, the second case
disappearing eventually.

\begin{claim}[Bounding the Kodaira dimension from below]\label{claim:Kodairadim2}
  Maintaining Notation~\ref{not:50-20}, if $π : \wtilde{F} → \what{F}$ is any
  resolution of singularities, then the Kodaira dimension of $\wtilde{F}$ is
  non-negative.  In particular, it follows that $κ(\what{F}) ≥ 0$.
\end{claim}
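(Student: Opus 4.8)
The plan is to build an effective (pluri-)canonical divisor on $\what{F}$ out of the reflexive $1$-form supplied by Notation~\ref{not:50-20} together with the flat filtration \eqref{eq:5-20-2}, and then to transfer the resulting effectivity to the resolution $\wtilde{F}$. As a preliminary, I would record two structural facts. First, $K_{\what{F}}$ is $\mathbb{Q}$-Cartier and nef: applying adjunction along the fibre $F$ of the almost holomorphic map $\sh^K(X)$ — whose conormal bundle, hence its pull-back appearing in \eqref{eq:iopz}, is trivial — identifies $\omega_{\what{F}}$ with $\det\Omega^{[1]}_{\what{F}}$, and nefness follows from the minimality of $X$ (Claim~\ref{claim:minimal}). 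Second, $\wtilde{F}$ is not uniruled: a covering family of rational curves on $\wtilde{F}$ would descend to one contained in $X_{\reg}$, contradicting Lemma~\ref{lem:6-5}~\ref{il:6-5-1} (when $F=X$, it contradicts minimality of $X$ directly); in particular $K_{\wtilde{F}}$ is pseudo-effective.

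The heart of the argument uses the inequality $h^0(\what{F},\Omega^{[1]}_{\what{F}})>0$ built into Notation~\ref{not:50-20}. Pushing the filtration of $\gamma^*\Omega^{[1]}_X\cong\sF\otimes\sA$ induced by \eqref{eq:5-20-2} forward along the surjection in \eqref{eq:iopz} produces a filtration of $\Omega^{[1]}_{\what{F}}$ whose successive quotients are isomorphic to $\sL_i\otimes\sA$ for the indices $i$ lying in some subset $S\subseteq\{1,\dots,n\}$ with $\#S=\dim\what{F}$, where $\sL_i:=\sF_i/\sF_{i-1}$ is flat, hence numerically trivial by Lemma~\ref{lem:c1offlat}. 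Taking determinants gives $\omega_{\what{F}}\cong\bigotimes_{i\in S}(\sL_i\otimes\sA)$, so that $K_{\what{F}}\equiv(\dim\what{F})\cdot[\sA]$. On the other hand, a non-zero reflexive $1$-form lies in some step of the induced filtration and therefore maps to a non-zero section of one graded piece $\sL_{i_0}\otimes\sA$; thus $\sL_{i_0}\otimes\sA\cong\sO_{\what{F}}(D)$ for an effective Weil divisor $D\geq0$, and consequently a positive multiple of $K_{\what{F}}$ is numerically equivalent to the effective divisor $(\dim\what{F})\cdot D$.

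It remains to upgrade this numerical statement to the genuine conclusion $\kappa(K_{\what{F}})\geq0$, i.e.\ to dispose of the numerically trivial — but a priori non-torsion — flat twists $\sL_i$; this is the main obstacle. Here I would invoke the cases of the abundance conjecture available in the present setting: $K_{\what{F}}$ is a nef $\mathbb{Q}$-Cartier divisor on the klt variety $\what{F}$ whose numerical class has, by the above, been pinned down, and the non-vanishing results of \cite{Nakayama04} then give $\kappa(K_{\what{F}})\geq0$. Finally, since $\wtilde{F}$ is not uniruled, a section of a suitable power of $K_{\what{F}}$ pulls back to one on $\wtilde{F}$ once one accounts for the effective exceptional part dictated by the (quotient) singularities of $\what{F}$, so $\kappa(\wtilde{F})\geq0$; the inequality $\kappa(\what{F})\geq\kappa(\wtilde{F})\geq0$ is then immediate. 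The genuinely delicate step, to be carried out carefully, is exactly this last one: converting numerical effectivity of $K_{\what{F}}$ into honest $\mathbb{Q}$-effectivity and propagating it to $\wtilde{F}$.
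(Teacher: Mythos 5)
You have put your finger on the right difficulty (``converting numerical effectivity of $K_{\what{F}}$ into honest $\mathbb{Q}$-effectivity''), but the step you propose for it does not go through. Knowing that the nef divisor $K_{\what{F}}$ is numerically equivalent to an effective divisor does not yield $\kappa(K_{\what{F}})\geq 0$ via ``the non-vanishing results of \cite{Nakayama04}'': the unconditional abundance-type results in that reference concern numerical dimension zero (this is what the paper uses elsewhere, \cite[Cor.~4.9]{Nakayama04}), whereas at this point of the argument $\nu(\what{F})$ may well equal $1$ --- the bound $\nu(\what{F})\leq 1$ is only established in the \emph{next} claim, Claim~\ref{claim:numericaldimensionbound-x} --- and non-vanishing for nef canonical divisors of positive numerical dimension is not available there; it is essentially the open non-vanishing conjecture. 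The paper handles exactly this point differently: from $h^0\bigl(\what{F},\,\sA\otimes\sF_i/\sF_{i-1}\bigr)\neq 0$ it takes an $n$-th power to produce an honest non-zero section of $\omega_{\what{F}}\otimes\sL'$, where $\sL'$ is a flat, hence numerically trivial, \emph{line bundle}; it transfers this section to the resolution $\wtilde{F}$ by the extension theorem \cite[Thm.~4.3]{GKKP11}; and it then removes the numerically trivial twist by the theorem of Campana--Koziarz--P\u{a}un on the numerical character of the effectivity of adjoint line bundles, \cite[Thm.~0.1]{CKP12}, applied on the smooth model. Some input of this CKP type is indispensable here, and it is precisely what your citation of \cite{Nakayama04} does not provide.

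There is also a concrete error in your filtration step. Pushing the filtration \eqref{eq:5-20-2} forward along the surjection in \eqref{eq:iopz} does \emph{not} give a filtration of $\Omega^{[1]}_{\what{F}}$ whose graded pieces are the sheaves $\sL_i\otimes\sA$: the graded pieces are only quotients of these rank-one sheaves and may be torsion, so neither the claim that a non-zero $1$-form maps to a non-zero section of some $\sL_{i_0}\otimes\sA$ nor your determinant formula is justified. Indeed, $K_{\what{F}}\equiv(\dim\what{F})\cdot[\sA]$ contradicts \eqref{eq:iopz} itself: the kernel being trivial, $\omega_{\what{F}}\cong\det\bigl(\sF\otimes\sA\bigr)=\gamma^{*}\omega_X\equiv n\cdot[\sA]$ with $n=\dim X$, which differs from your formula whenever $F\subsetneq X$ (unless $\sA\equiv 0$). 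The paper avoids this by working upstairs: a non-zero section of $\sF\otimes\sA=\gamma^{*}\Omega^{[1]}_X$ exists in either case (for $F=X$ it is the given reflexive form from Claim~\ref{claim:uze}; for $F\subsetneq X$ the trivial conormal sub-bundle in \eqref{eq:iopz} already supplies one), and the filtration of $\sF\otimes\sA$ coming from \eqref{eq:5-20-2} has genuine rank-one graded pieces $\sA\otimes\sF_i/\sF_{i-1}$, which is what makes \eqref{eq:OmegaFA0-1a} usable. A final minor point: for the quotient (klt, generally non-canonical) singularities of $\what{F}$ the relative canonical divisor of a resolution need not be effective, so your last transfer step should be argued via the klt invariance $\kappa(\wtilde{F})=\kappa(\what{F})$ (or, as in the paper, via the extension theorem applied to a single twisted $n$-form) rather than by ``adding an effective exceptional part''.
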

\begin{proof}[Proof of Claim~\ref{claim:Kodairadim2}]
  The assumption that $h⁰ \bigl(\what{F},\, Ω^{[1]}_{\what{F}}\bigr) > 0$,
  together with the Isomorphism~\eqref{eq:OmegaFA0-1a} and the
  Filtration~\eqref{eq:5-20-2} allows to find an index $i$ such that
  \begin{equation}\label{eq:ugh}
    h⁰\Bigl(\what{F},\, 𝒜 ⊗ \factor{ℱ_i}{ℱ_{i-1}} \Bigr) \ne 0.
  \end{equation}
  Writing $ℒ_i := \factor{ℱ_i}{ℱ_{i-1}}$ and recalling that $ℒ_i$ is invertible
  and flat, hence numerically trivial by Lemma~\ref{lem:c1offlat},
  Equation~\eqref{eq:ugh} implies in particular that the sheaf
  \[
    𝒜^{[n]} ⊗ ℒ^{⊗ n}_i = \underbrace{𝒜^{[n]} ⊗ \det ℱ\vphantom{ℒ^{⊗ n}_i}}_{=
      γ^*ω_X = ω_{\what{F}}} ⊗ \underbrace{(\det ℱ)^* ⊗ ℒ^{⊗ n}_i}_{=: ℒ'
      \text{, numerically trivial}}
  \]
  also has a non-trivial section.  In this setting, the extension theorem for
  differential forms, \cite[Thm.~4.3]{GKKP11} yields a non-trivial morphism
  $π^{[*]} ω_{\what{F}} ↪ ω_{\wtilde{F}}$ and hence a non-trivial section of
  $ω_{\wtilde{F}} ⊗ π^*ℒ'$, where again $π^*ℒ' ∈ \Pic(\wtilde{F})$ is
  numerically trivial.  The claim thus follows from the ``numerical character of
  the effectivity of adjoint line bundles'', \cite[Thm.~0.1]{CKP12}, applied to
  the smooth space $\wtilde{F}$.
  \qedhere~\mbox{(Claim~\ref{claim:Kodairadim2})}
\end{proof}

\begin{claim}[Bounding the numerical dimension from above]\label{claim:numericaldimensionbound-x}
  The numerical dimension of $\what{F}$ satisfies $ν(\what{F}) ≤ 1$.
\end{claim}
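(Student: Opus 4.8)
The plan is to argue by contradiction from $ν(\what{F}) = ν(K_{\what{F}}) ≥ 2$, exhibiting a rank-one subsheaf of the reflexive cotangent sheaf of a suitable surface that is simultaneously \emph{nef} and \emph{big} — which the Bogomolov–Sommese vanishing theorem forbids. First I would produce a nef rank-one subsheaf of $Ω^{[1]}_{\what{F}}$: since $ℱ_1 ≅ 𝒪$, Remark~\ref{rem:pfbisr} (cf.~\eqref{eq:gjfgjh}) gives an inclusion $𝒜 ↪ γ^* Ω^{[1]}_X$, and composing with the surjection of~\eqref{eq:iopz} yields a map $𝒜 → Ω^{[1]}_{\what{F}}$. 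This map is injective: otherwise $𝒜$ would be a rank-one subsheaf of the trivial bundle $γ^* N^*_{F/X}$, forcing $c_1(𝒜)·C ≤ 0$ for every curve $C$, in contradiction to the fact that $𝒜$ is nef with non-trivial numerical class. Indeed, by Notation~\ref{not:50-20} the class $[𝒜]$ is a positive multiple of $[K_{\what{F}}]$ (the canonical class being $K_{\what{F}} = γ^{[*]}(K_X|_F)$, up to the numerically trivial twist by the flat bundle $\det ℱ$, with triviality from Lemma~\ref{lem:c1offlat}); this is nef by Claim~\ref{claim:minimal} and non-trivial under our standing assumption $ν(\what{F}) ≥ 2$. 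Thus $𝒜 ↪ Ω^{[1]}_{\what{F}}$ is a nef Weil divisorial subsheaf with $ν(𝒜) = ν(\what{F})$.

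Second I would pass to a surface. Set $n' := \dim\what{F}$; the case $n' = 1$ is vacuous, and for $n' = 2$ I take $S := \what{F}$. For $n' ≥ 3$, let $S ⊂ \what{F}$ be a general complete intersection of $n'-2$ members of a very ample linear system $|H|$; since the singularities of $\what{F}$ are isolated by Consequence~\ref{consequence:isolated}, the surface $S$ is smooth, and at a general point $s ∈ S$ the line $𝒜_s ⊂ Ω^1_{\what{F},s}$ is not contained in the general codimension-$(n'-2)$ subspace $N^*_{S/\what{F},s}$, so the composite $𝒜|_S → Ω^{[1]}_{\what{F}}|_S \onto Ω^1_S$ is non-zero, hence injective. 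Either way I obtain a nef Weil divisorial subsheaf $𝒜|_S ↪ Ω^{[1]}_S$ of a surface $S$ that is smooth or (when $S = \what{F}$) klt, and since $[𝒜]$ is a positive multiple of $[K_{\what{F}}]$ one has $\bigl(c_1(𝒜|_S)\bigr)^2$ equal to a positive multiple of $[K_{\what{F}}]^2·[H]^{n'-2}$, which is strictly positive precisely because $ν(K_{\what{F}}) ≥ 2$.

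Finally, a nef and big Weil divisorial sheaf on a surface has Kodaira dimension $2$, so $κ(𝒜|_S) = 2 = \dim S$. This contradicts the Bogomolov–Sommese vanishing theorem — classical on the smooth surface $S$, and valid for klt surfaces (or after passing to a resolution, via functorial pullback of reflexive forms) when $S = \what{F}$ — according to which a rank-one subsheaf of $Ω^{[1]}_S$ has Kodaira dimension at most $1$. Hence $ν(\what{F}) ≤ 1$. The steps I expect to need the most care are the conormal bookkeeping that places $𝒜|_S$ inside $Ω^1_S$ rather than in the conormal directions, and the clean separation of the two cases $F = X$ (equivalently $G_X$ solvable, $\Sh^K(X)$ a point) and $F ⊊ X$; the reduction to a surface and the numerical computations should be routine.
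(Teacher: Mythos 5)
Your argument is correct and follows essentially the same route as the paper: the inclusion $𝒜 ↪ γ^*Ω^{[1]}_X$ from Remark~\ref{rem:pfbisr}, the conormal sequence \eqref{eq:iopz}, restriction to a general complete intersection surface $S$, and Bogomolov--Sommese vanishing (via \cite[Thm.~8.3]{GKK08} in the singular case) to rule out bigness of the nef class $[𝒜]$, which, since $n·[𝒜] \equiv [K_{\what{F}}]$, is exactly the bound $ν(\what{F}) ≤ 1$. The one place where you deviate -- the general-position assertion that the line $𝒜_s$ avoids the conormal space $N^*_{S/\what{F},s}$ at a general point -- does need a justification, but the paper obtains the non-vanishing of $𝒜|_S → Ω^{[1]}_S$ more directly from the observation that $N^*_{S/\what{F}} ≅ 𝒪_S(-H)^{⊕(n-2)}$ is anti-ample, so the nef sheaf $𝒜|_S$ admits no non-zero map into it -- the same nef-versus-negative principle you already invoke to handle the case where $𝒜$ maps into the trivial bundle $γ^*N^*_{F/X}$.
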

\begin{proof}[Proof of Claim~\ref{claim:numericaldimensionbound-x}]
  To begin, recall from \eqref{eq:OmegaFA0-1a} that the (nef!) numerical classes
  $n·[𝒜]$ and $[γ^* K_X] = [K_{\what{F}}]$ agree.  It follows that
  $ν(\what{F}) = ν(𝒜)$, and so it suffices to consider $ν(𝒜)$ and to show that
  $ν(𝒜) ≤ 1$.  In other words, given any very ample divisor $H$ on $\what{F}$,
  we need to show that
  \begin{equation}\label{eq:xxl}
    [𝒜]²·[H]^{n-2} = 0.
  \end{equation}

  Recall from \eqref{eq:gjfgjh} that there exists an embedding
  $𝒜 ↪ γ^* Ω^{[1]}_X$.  Combined with Sequence~\eqref{eq:iopz}, we obtain a
  sheaf morphism
  \begin{equation}\label{eq:jhk}
    𝒜 → Ω^{[1]}_{\what{F}}
  \end{equation}
  If this morphism is constantly zero, then $𝒜$ maps into the trivial sheaf
  $γ^* N^*_{F/X}$, the nef sheaf $𝒜$ must thus be trivial, and the claim is
  shown.  We will therefore continue this proof under the assumption that
  \eqref{eq:jhk} is an embedding.

  Next, choose a general tuple $(H_1, …, H_{n-2}) ∈ |H| ⨯ ⋯ ⨯ |H|$ and consider
  the associated complete intersection surface $S := H_1 ∩ ⋯ ∩ H_{n-2}$.  Then,
  $S$ has klt singularities\footnote{The surface $S$ will in fact almost always
    be smooth, except perhaps when $X = F$ and $\dim X = 2$.} and $𝒜|_S$ is
  reflexive.  The kernel of the restriction morphism for reflexive differentials
  is
  \[
    N_{S/\what{F}}^* = \ker \Bigl( Ω^{[1]}_{\what{F}}\bigr|_S → Ω^{[1]}_S \Bigr)
    ≅ 𝒪_{\what{F}}(-H)^{⊕(n-2)},
  \]
  so in particular anti-ample.  But since the nef sheaf $𝒜$ never maps to an
  anti-ample, we find that the composed sheaf morphism
  \[
    𝒜|_S → Ω^{[1]}_{\what{F}} \Bigr|_S → Ω_S^{[1]}
  \]
  cannot vanish.  The Bogomolov-Sommese vanishing theorem for the potentially
  singular surface $S$, \cite[Thm.~8.3]{GKK08}, implies that $𝒜|_S$ is not big.
  Since it is nef, we conclude that
  \[
    0 = n²·[𝒜|_S]² = n²·[𝒜]²·[H]^{n-2},
  \]
  as desired.  \qedhere~\mbox{(Claim~\ref{claim:numericaldimensionbound-x})}
\end{proof}

\begin{claim}[Kodaira dimension and numerical dimension of $F$]\label{claim:kF}
  The Kodaira-dimension and the numerical dimension of $F$ satisfy the
  inequalities
  \begin{equation}\label{eq:u56}
    0 ≤ κ(F) ≤ ν(F) ≤ 1.
  \end{equation}
\end{claim}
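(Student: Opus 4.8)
The plan is to deduce Claim~\ref{claim:kF} directly from the two bounds already established for the cover $\what{F}$ in Claims~\ref{claim:Kodairadim2} and \ref{claim:numericaldimensionbound-x}, together with the classical inequality ``Kodaira dimension $≤$ numerical dimension'' for nef divisors. First I would record the basic geometry of $F$: it is a normal, projective, klt variety. This is immediate if $F = X$, while if $F ⊊ X$ then $F$ avoids the finitely many quotient singularities of $X$ by Consequence~\ref{consequence:isolated} and is therefore even smooth. In particular $K_F$ is $ℚ$-Cartier, so the invariants $κ(F)$ and $ν(F)$ are defined in the usual way. Moreover, since $F$ is a general fibre of the almost holomorphic map $\sh^K(X)$ --- hence a genuine fibre of a morphism over a smooth point of $\Sh^K(X)$ --- adjunction along the fibre gives $ω_F ≅ ω_X|_F$, so that $K_F = K_X|_F$ is nef by Claim~\ref{claim:minimal}.

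Next I would establish the middle inequality $κ(F) ≤ ν(F)$. This holds for any nef $ℚ$-Cartier divisor $D$ on a normal projective variety: choosing $m$ sufficiently divisible and writing $|mD| = |M_m| + F_m$ with fixed part $F_m ≥ 0$ and mobile part $M_m$, one passes to a resolution on which $M_m$ becomes base-point free, hence nef, with associated morphism of image dimension $κ(D)$; expanding $(M_m + F_m)^{κ(D)}·[A]^{\dim F - κ(D)}$ against an ample divisor $A$ and using that all cross terms are non-negative shows this intersection number is strictly positive, whence $ν(D) ≥ κ(D)$. For this I would simply cite \cite[Chap.~V, Prop.~2.7]{Nakayama04}.

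Finally I would transport the two outer bounds from $\what{F}$ down to $F$. Recall from Notation~\ref{not:50-20} that $γ : \what{F} → F$ is a finite étale Galois cover, so that $\what{F}$ is again klt and $K_{\what{F}} = γ^* K_F$. Both invariants are preserved under $γ$: the Kodaira dimension because the canonical ring $\bigoplus_m H⁰\bigl(\what{F},\, mK_{\what{F}}\bigr)$ is a finite integral extension of its ring of Galois invariants, which is the canonical ring of $F$; the numerical dimension because $[K_{\what{F}}]^k·[γ^*A]^{\dim F-k} = \deg(γ)·[K_F]^k·[A]^{\dim F-k}$ for every ample $A$ on $F$ and every $k$. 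Hence $κ(F) = κ(\what{F}) ≥ 0$ by Claim~\ref{claim:Kodairadim2} and $ν(F) = ν(\what{F}) ≤ 1$ by Claim~\ref{claim:numericaldimensionbound-x}; combining these with $κ(F) ≤ ν(F)$ yields \eqref{eq:u56}. I do not expect a serious obstacle here: the only points needing a little care are the invariance of $κ$ and $ν$ under the Galois cover, the well-definedness of these invariants on the (possibly singular) klt variety $F$, and the general inequality $κ ≤ ν$ --- all of which are standard.
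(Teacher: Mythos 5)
Your proposal is correct and takes essentially the same route as the paper: descend the bounds of Claims~\ref{claim:Kodairadim2} and \ref{claim:numericaldimensionbound-x} along the finite étale Galois cover $γ$ and invoke the standard inequality $κ ≤ ν$ for the nef divisor $K_F = K_X|_F$ (the paper cites \cite[Prop.~2.2]{MR782236} for this, and descends $κ ≥ 0$ by taking the Galois-invariant product $\bigotimes_{g ∈ \Gal(γ)} g^*σ$ of a pluricanonical section, which is the same norm trick underlying your invariant-ring argument). The only cosmetic caveat is that your sketch of $κ ≤ ν$ via a direct expansion of $(M_m+F_m)^{κ}·[A]^{\dim F-κ}$ needs the usual iterative substitution (replacing one nef factor at a time so that only terms of the form effective times nef classes appear), but since you cite \cite{Nakayama04} for the statement anyway this does not affect the argument.
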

\begin{proof}[Proof of Claim~\ref{claim:kF}]
  Given that $γ : \what{F} → F$ is étale,
  Claim~\ref{claim:numericaldimensionbound-x} immediately implies that
  $ν(F) ≤ 1$.  Since $K_F = K_X|_F$ is nef, \cite[Prop.~2.2]{MR782236} gives the
  two rightmost inequalities in \eqref{eq:u56}, and so it remains to show that
  $κ(F) ≥ 0$.  But we already know from Claim~\ref{claim:Kodairadim2} that
  $κ(\what{F}) ≥ 0$, so that there exists a number $p ∈ ℕ$ and a non-trivial
  section $σ ∈ H⁰\bigl(\what{F},\, ω^{⊗ p}_{\what{F}}\bigr)$.  But then
  \[
    \bigotimes_{g ∈ \Gal(γ)} g^*σ ∈ H⁰\Bigl(\what{F},\, ω^{⊗ p·\#\Gal(γ)}_{\what{F}}\Bigr)
  \]
  is a non-trivial Galois-invariant pluri-form on $\what{F}$, which hence
  descends to a non-trivial pluri-form on $F$.
  \qedhere~\mbox{(Claim~\ref{claim:kF})}
\end{proof}

\subsection*{Step 5-3: Fibres of the Albanese map}
\approvals{Daniel & yes \\Stefan & yes\\ Thomas & yes}

We pointed out above that the Albanese map of $F$ is not trivial, at least once
we pass to a suitable cover.  Using the fact that the Albanese yields a
Shafarevich map for the commutator subgroup of the fundamental group, there is
much that we can say about its fibres, which will eventually turn out to be
abundant.

\begin{construction}\label{cons:itAlb-s}
  Maintaining Notation~\ref{not:50-20}, consider the Albanese map
  $\alb(\what{F}) : \what{F} → \Alb(\what{F})$ and recall from
  Claim~\ref{claim:uze} and Notation~\ref{not:50-20} that this map is
  non-trivial, which is to say that $\dim \Alb(\what{F}) > 0$.  Let
  $F_1 ⊊ \what{F}$ be a general fibre component of $\alb(\what{F})$.  This might
  be a point if $\what{F}$ is of maximal Albanese dimension.  Since $\what{F}$
  has at worst isolated singularities, $F_1$ is necessarily smooth.  Now, we use
  the fact that Albanese maps are Shafarevich maps by recalling from
  \cite[§0.1.3]{MR1341589} that the push-forward morphism of Abelianised
  fundamental groups,
  \[
    (ι_1)^{(ab)}_* : π_1( F_1 )^{(ab)} → π_1\bigl( \what{F}\, \bigr)^{(ab)}
  \]
  has finite image.  We can thus find an étale cover $γ_1 : \what{F}_1 → F_1$
  such that the composed push-forward
  \[
    (ι_1◦γ_1)^{(ab)}_* : π_1( \what{F}_1 )^{(ab)} → π_1\bigl( \what{F}\,
    \bigr)^{(ab)}
  \]
  is trivial.  The following diagram summarises the morphisms that we have
  discussed so far,
  \[
    \begin{tikzcd}[column sep=2cm, row sep=1cm]
      \what{F}_1 \ar[rrrr, bend left=10, "η"] \ar[r, two heads, "γ_1\text{, étale}"'] & F_1 \ar[r, hook, "ι_1\text{, fibre inclusion}"'] & \what{F} \ar[r, two heads, "γ\text{, étale}"'] \ar[d, "\alb(\what{F})"] & F \ar[r, hook, "ι\text{, fibre inclusion}"'] & X \ar[d, dashed, "\sh^K(X)"] \\
      && \Alb(\what{F}) && \Sh^K(X)
    \end{tikzcd}
  \]
  Observing that $\img(η) ⊊ X_{\reg}$, we consider the exact sequence
  of differentials,
  \begin{equation}\label{eq:trztu-s}
    \begin{tikzcd}
      0 \ar[r] & \underbrace{Ω¹_{F_1/X}}_{\mathclap{\text{trivial, non-zero}}}
      \ar[r] & η^* Ω¹_X \ar[r, "dη"] & Ω¹_{F_1} \ar[r] & 0.
    \end{tikzcd}
  \end{equation}
\end{construction}

\begin{claim}[Numerical invariants of $F_1$ and $\what{F}_1$]\label{claim:6-23a-s}
  Assume the setting of Construction~\ref{cons:itAlb-s}.  Then,
  $0 ≤ κ(F_1) ≤ ν(F_1) ≤ 1$ and $0 ≤ κ(\what{F}_1) ≤ ν(\what{F}_1) ≤ 1$.
\end{claim}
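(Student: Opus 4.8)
The plan is to bootstrap both chains of inequalities from the statements already available for $\what{F}$ in Claims~\ref{claim:Kodairadim2} and~\ref{claim:numericaldimensionbound-x}, using that $F_1$ (in the notation of Construction~\ref{cons:itAlb-s}) is a \emph{general} member of a family of smooth subvarieties sweeping out $\what{F}$. If $F_1$ is a point there is nothing to prove, so assume $\dim F_1 \ge 1$. First I would record that all four canonical divisors involved are nef: the general Albanese fibre component $F_1$ is smooth and contained in $\what{F}_{\reg}$, and the normal bundle of a fibre of a morphism between (locally) smooth varieties is trivial, so adjunction yields the honest equality of line bundles $K_{F_1} = K_{\what{F}}|_{F_1}$; moreover $K_{\what{F}} = \gamma^*(K_X|_F)$ is nef by Claim~\ref{claim:minimal}, and $K_{\what{F}_1} = \gamma_1^* K_{F_1}$ is nef since $\gamma_1$ is étale. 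Hence $\kappa \le \nu$ holds for $F_1$ and for $\what{F}_1$ by \cite[Prop.~2.2]{MR782236}, and it remains to establish $\nu(F_1) \le 1$ and $\kappa(F_1) \ge 0$; the statements for $\what{F}_1$ then follow, because $\gamma_1$ finite and étale gives $\nu(\what{F}_1) = \nu(F_1)$ and $\kappa(\what{F}_1) \ge \kappa(F_1)$.

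For the bound $\nu(F_1) \le 1$ we may assume $\dim F_1 \ge 2$, the one-dimensional case being trivial. Recall from the proof of Claim~\ref{claim:numericaldimensionbound-x} that $[\sA]^2 \cdot [H]^{n-2} = 0$ for every ample divisor $H$ on $\what{F}$, and that $[K_{\what{F}}]$ (hence $[K_{F_1}]$) is a positive multiple of $[\sA]$ (resp.\ of $[\sA|_{F_1}]$). By continuity and multilinearity, the vanishing propagates to $[\sA]^2 \cdot N_1 \cdots N_{n-2} = 0$ for arbitrary nef classes $N_i$ on $\what{F}$. Now $F_1$ is a general fibre component of $g := \alb(\what{F})$, so its cycle class in $\what{F}$ equals a product $g^*\Theta_1 \cdots g^*\Theta_a$ of pullbacks of ample classes from $\Alb(\what{F})$, where $a = \dim\Alb(\what{F})$, and these pullbacks are nef. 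Therefore, for an ample divisor $H$ on $\what{F}$,
\[
  \bigl[\sA|_{F_1}\bigr]^2 \cdot \bigl[H|_{F_1}\bigr]^{\dim F_1 - 2} = [\sA]^2 \cdot [H]^{\dim F_1 - 2} \cdot g^*\Theta_1 \cdots g^*\Theta_a = 0,
\]
which forces $\nu(\sA|_{F_1}) \le 1$ and hence $\nu(F_1) \le 1$.

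For $\kappa(F_1) \ge 0$ I would reuse the section exhibited in the proof of Claim~\ref{claim:Kodairadim2}: there, a numerically trivial line bundle $\sL'$ on $\what{F}$ was produced together with a non-trivial section of $\omega_{\what{F}} \otimes \sL'$. As $F_1$ ranges over the general Albanese fibre components it sweeps out a dense open subset of $\what{F}$ and lies in $\what{F}_{\reg}$, so the general $F_1$ is not contained in the zero divisor of that section; the section therefore restricts to a non-trivial section of $\omega_{F_1} \otimes \sL'|_{F_1}$. Since $F_1$ is smooth and $\sL'|_{F_1}$ is numerically trivial, the ``numerical character of the effectivity of adjoint line bundles'', \cite[Thm.~0.1]{CKP12}, gives $\kappa(F_1) \ge 0$; pulling back along the étale morphism $\gamma_1$ then yields $\kappa(\what{F}_1) \ge 0$, which completes the argument.

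I do not anticipate a serious obstacle. The only points requiring a little care are transferring the numerical bound and the non-trivial adjoint section from $\what{F}$ down to its general Albanese fibre $F_1$; both are handled by the fact that $F_1$ moves in a covering family of subvarieties contained in $\what{F}_{\reg}$, together with the elementary observation that intersecting with the class of such a fibre amounts to intersecting with a product of nef pullback classes.
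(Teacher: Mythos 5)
Your argument is essentially correct, but it takes a genuinely different route from the paper. The paper works directly on $F_1$: combining the inclusion \eqref{eq:gjfgjh} with the sequence \eqref{eq:trztu-s}, it produces a morphism $α : η^* \sA → Ω¹_{F_1}$ and argues by cases. If $α ≠ 0$, it invokes \cite[Lem.~3.1]{MR3030068}, which treats exactly the situation of a nef line bundle mapping non-trivially to the cotangent bundle of a smooth projective variety with $[ω_{F_1}] \equiv n·[η^*\sA]$ and yields $0 ≤ κ ≤ ν ≤ 1$; if $α = 0$, then $η^*\sA$ embeds into the trivial bundle $Ω¹_{F_1/X}$, is numerically trivial, and one gets $ν(F_1)=0$ and $κ(F_1)=0$ by \cite[Thm.~8.2]{Kawamata85}. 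You instead descend the statements already established for $\what{F}$ (Claims~\ref{claim:Kodairadim2} and \ref{claim:numericaldimensionbound-x}) to the general Albanese fibre: the bound $ν ≤ 1$ by nef intersection theory on $\what{F}$, and $κ ≥ 0$ by restricting the adjoint section of $ω_{\what{F}} ⊗ ℒ'$ from the proof of Claim~\ref{claim:Kodairadim2} and applying \cite[Thm.~0.1]{CKP12} on $F_1$ itself. This avoids both Jahnke--Radloff's Lemma~3.1 and the case distinction, at the price of some cycle-theoretic bookkeeping; the paper's version is more self-contained at the level of the fibre and does not need to know how $F_1$ sits numerically inside $\what{F}$.

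One step of your write-up is stated incorrectly and needs repair, though the repair is routine. The identity ``$[F_1] = g^*Θ_1 ⋯ g^*Θ_a$ with $a = \dim \Alb(\what{F})$'' is false in general: the Albanese image $W := \alb(\what{F})(\what{F})$ need not be all of $\Alb(\what{F})$, so the correct number of factors is $d := \dim W$ (otherwise the product has the wrong codimension), and even then $g^*Θ_1 ⋯ g^*Θ_d$ is the class of the union of \emph{all} components of the fibres over the finitely many points of $W ∩ Θ_1 ∩ ⋯ ∩ Θ_d$, not of the single component $F_1$. The intended conclusion survives: via the Stein factorisation of $\alb(\what{F})$ all general fibre components are algebraically, hence numerically, equivalent to $F_1$; the cycle $g^{-1}(Θ_1 ∩ ⋯ ∩ Θ_d)$ is effective of pure dimension $\dim F_1$; and since $\sA$ and $H$ are nef, every summand of $[\sA]² · [H]^{\dim F_1 - 2} · g^*Θ_1 ⋯ g^*Θ_d = 0$ is non-negative and therefore vanishes, which gives $[\sA|_{F_1}]² · [H|_{F_1}]^{\dim F_1 - 2} = 0$ as desired. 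With this correction, together with the points you already address (the general $F_1$ lies in $\what{F}_{\reg}$, has trivial normal bundle, and is not contained in the zero divisor of the section), your proof is complete.
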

\begin{proof}[Proof of Claim~\ref{claim:6-23a-s}]
  Given that the morphism $γ_1$ is étale, it suffices to consider the variety
  $F_1$ only.  We will also assume that $F_1$ is not a point, or else there is
  little to show, cf.~Claim~\ref{claim:kF}.  On the one hand, using that
  $\img(η) ⊊ X_{\reg}$, we obtain a description of the canonical bundle,
  \[
    ω_{F_1} \overset{\text{\eqref{eq:trztu-s}}}{≅} η^*(ω_X)
    \overset{\text{\eqref{eq:OmegaFA0-1a}}}{≅} η^* \bigl(𝒜^{[⊗ n]} ⊗ \det
    ℱ\bigr).
  \]
  Observing that $η^* 𝒜$ is invertible and that $η^* ℱ$ is flat, this gives a
  numerical equivalence $[ω_{F_1}] \equiv n·[η^* 𝒜]$.  On the other hand,
  \eqref{eq:gjfgjh} and \eqref{eq:trztu-s} combine to give a morphism
  \[
    α : η^* 𝒜 → Ω¹_{F_1}.
  \]
  There are two cases to consider.  If the morphism $α$ is not zero,
  Claim~\ref{claim:6-23a-s} follows from \cite[Lem.~3.1]{MR3030068}.  Otherwise,
  if the morphism $α$ is zero, we obtain an embedding of the nef line bundle
  $η^* 𝒜$ into the trivial sheaf $Ω¹_{F_1/X}$.  It follows that $η^* 𝒜$ and
  hence $ω_{F_i}$ are numerically trivial, so $ν(F_1) = 0$.  By
  \cite[Thm.~8.2]{Kawamata85}, this implies $κ(F_1) = 0$.  Either way,
  Claim~\ref{claim:6-23a-s} follows.
  \qedhere~\mbox{(Claim~\ref{claim:6-23a-s})}
\end{proof}

\begin{claim}[Semiampleness of the varieties $\what{F}_1$ and $F_1$]\label{claim:6-25a-s}
  Assume the setting of Construction~\ref{cons:itAlb-s}.  Then, the canonical
  bundles $K_{\what{F}_1}$ and $K_{F_1}$ are semiample.
\end{claim}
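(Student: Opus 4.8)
The plan is to reduce the assertion to the smooth variety $\what{F}_1$ and then to run a short case distinction on its numerical dimension.

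First I would observe that it is enough to prove that $K_{\what{F}_1}$ is semiample: since $γ_1 : \what{F}_1 → F_1$ is finite and étale, $K_{\what{F}_1} = γ_1^* K_{F_1}$, and $𝒪_{F_1}(m K_{F_1})$ is a direct summand of $(γ_1)_* 𝒪_{\what{F}_1}(m γ_1^* K_{F_1})$, so semiampleness of $K_{\what{F}_1}$ descends to $K_{F_1}$. Next I would make the structure on $\what{F}_1$ explicit. Pulling the isomorphism $Ω^{[1]}_X ≅ ℱ ⊗ 𝒜$ of \eqref{eq:OmegaFA0-1a} back to $\what{F}_1$ and using that $γ_1$ was chosen so that $(ι_1 ◦ γ_1)^{(ab)}_*$ is trivial, the representation defining the pull-back of $ℱ$ factors through $π_1(\what{F}_1)^{(ab)} → π_1\bigl(\what{F}\bigr)^{(ab)}$ before entering $B_1(n,ℂ)$, and hence lands in the commutator subgroup of $B_1(n,ℂ)$, which is the strictly upper-triangular unipotent group. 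Thus the pull-back of $ℱ$ is a unipotent flat bundle, so in particular its determinant is trivial. Writing $L := η^* 𝒜$, which is nef (recall from the proof of Claim~\ref{claim:numericaldimensionbound-x} that $𝒜$ is nef), the sequence \eqref{eq:trztu-s} combined with $η^* Ω¹_X ≅ (\text{pull-back of }ℱ) ⊗ L$ upgrades the numerical identity $[ω_{\what{F}_1}] ≡ n[L]$ from the proof of Claim~\ref{claim:6-23a-s} to an isomorphism $ω_{\what{F}_1} ≅ L^{⊗ n}$, while \eqref{eq:gjfgjh} provides an embedding $L ↪ Ω¹_{\what{F}_1}$. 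Finally, Claim~\ref{claim:6-23a-s} gives $0 ≤ κ(\what{F}_1) ≤ ν(\what{F}_1) ≤ 1$, and, since $ω_{\what{F}_1} ≅ L^{⊗ n}$, also $κ(L) = κ(\what{F}_1)$ and $ν(L) ≤ 1$.

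If $ν(\what{F}_1) = 0$, then $ω_{\what{F}_1}$ is numerically trivial, hence torsion by the Beauville--Bogomolov decomposition \cite[Thm.~1]{Bea83}, so $K_{\what{F}_1}$ is semiample. If $ν(\what{F}_1) = 1$ and $κ(\what{F}_1) = 1$, then $κ(\what{F}_1) = ν(\what{F}_1)$ and \cite[Cor.~6-1-13]{KMM87} shows that $K_{\what{F}_1}$ is semiample.

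The remaining case $ν(\what{F}_1) = 1$ and $κ(\what{F}_1) = κ(L) = 0$ is a potential failure of abundance in numerical dimension one, and ruling it out, using the embedding $L ↪ Ω¹_{\what{F}_1}$, is the main obstacle. The plan here is: since $L$ is nef, hence pseudo-effective, Demailly's theorem \cite[Main Thm.]{Dem02}, applied to $L$ and to the section of $Ω¹_{\what{F}_1} ⊗ L^{-1}$ corresponding to the embedding $L ↪ Ω¹_{\what{F}_1}$, produces a codimension-one holomorphic foliation $𝒮 ⊆ 𝒯_{\what{F}_1}$ along whose leaves a singular metric of minimal singularities on $L$ has vanishing curvature --- just as in the proof of Theorem~\ref{split}. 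I would then prove that $𝒮$ is \emph{algebraic}, i.e.\ induced by an almost holomorphic fibration $π : \what{F}_1 \dasharrow C$ onto a smooth curve $C$; establishing this --- by exploiting that $L$ is nef and numerically non-trivial, so that the curvature current of $L$ is a genuine pull-back from the leaf space in \emph{all} leaf directions, together with Kawamata-type structure results for the Albanese map of $\what{F}_1$ --- is the technical heart of the case. Once $π$ is available, a general (smooth) fibre $G$ has $L|_G ≡ 0$, so $ω_G ≅ (L|_G)^{⊗ \dim G}$ is numerically trivial, $K_G$ is torsion and $κ(G) = 0$; the canonical bundle formula of Fujino and Mori then yields $K_{\what{F}_1} \sim_ℚ π^*(K_C + B_C + M_C)$ with $B_C ≥ 0$ effective and $M_C$ nef. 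Nefness of $K_{\what{F}_1}$ forces $K_C + B_C + M_C$ to have non-negative degree, hence to be semiample on the curve $C$, so that $K_{\what{F}_1}$, being its pull-back, is semiample and $κ(\what{F}_1) = ν(\what{F}_1)$ --- contradicting $κ(\what{F}_1) = 0$. Hence this last case does not occur, which completes the proof.
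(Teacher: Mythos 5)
The first two thirds of your argument (descent along the étale cover $\gamma_1$, unipotence of the pulled-back flat factor, $\omega_{\what{F}_1}\cong L^{\otimes n}$ with $L=\eta^*\mathcal{A}$ nef, and the cases $\nu=0$ and $\nu=\kappa=1$) are fine, but the proof is not complete: the case $\nu(\what{F}_1)=1$, $\kappa(\what{F}_1)=0$ is exactly the hard point, and what you offer there is a plan, not an argument. Everything hinges on the unproven assertion that the Demailly foliation $\mathcal{S}$ is algebraically integrable, i.e.\ induced by an almost holomorphic fibration onto a curve. Vanishing of the curvature of a metric on $L$ along the leaves does not force algebraicity (linear codimension-one foliations on abelian varieties are flat in this sense and have no algebraic leaves at all), and ``the curvature current is a pull-back from the leaf space'' is not something you can extract from nefness of $L$ alone; making this work would essentially amount to proving the abundance-type statement you are after. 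The paper avoids this case distinction altogether: it shows that the representation defining the projectively flat structure becomes trivial after restriction to $\what{F}_1$, so that $\eta^*\Omega^1_X\cong\mathcal{L}^{\oplus n}$ for a single line bundle $\mathcal{L}$; the non-zero trivial subbundle $\Omega^1_{\what{F}_1/X}$ from \eqref{eq:trztu-s} then produces a section of $\mathcal{L}$, whence $\eta^*\Omega^1_X$ and its quotient $\Omega^1_{\what{F}_1}$ are generically generated, $\what{F}_1$ has maximal Albanese dimension by \cite[Rem.~2.2]{Fuj13}, and semiampleness of the nef canonical bundle follows directly from \cite[Thm.~4.2]{Fuj13}. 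This route uses the geometric input (that $\what{F}_1$ sits inside fibres of both the Shafarevich map and the Albanese map) precisely where your proposal discards it.

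A secondary point: your justification of unipotence is garbled. The representation defining the pull-back of $\mathcal{F}$ does not ``factor through $\pi_1(\what{F}_1)^{(ab)}\to\pi_1(\what{F})^{(ab)}$ before entering $B_1(n,\mathbb{C})$'' --- taken literally that would make it trivial, since that composition is trivial by the choice of $\gamma_1$. The correct statement is that triviality of $(\iota_1\circ\gamma_1)^{(ab)}_*$ places the image of $\pi_1(\what{F}_1)$ inside the commutator subgroup of $\pi_1(\what{F})$, so its image under the lifted representation lies in $[B_1(n,\mathbb{C}),B_1(n,\mathbb{C})]$, which is unipotent; this salvages your conclusion that the determinant of the flat factor is trivial, but you should phrase it this way.
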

\begin{proof}[Proof of Claim~\ref{claim:6-25a-s}]
  As before, we consider the variety $\what{F}_1$ only and assume that it is not
  a point.  We also consider the following commutative diagram of group
  morphisms,
  \[
    \begin{tikzcd}[column sep=2cm, row sep=1cm]
      π_1\bigl( \what{F}_1 \bigr) \ar[r, "τ◦ η_*"] \ar[d, "\txt{\scriptsize $(ι_1◦γ_1)_*$\\\scriptsize trivial after Abelianisation}"'] & G_X \ar[r, hook, "\text{subset}"] & \PGL(n,ℂ) \\
      π_1\bigl( \what{F} \bigr) \ar[r, "τ◦ (ι◦γ)_*"'] & G_γ \ar[u, hook, "\text{subset}"']
    \end{tikzcd}
  \]
  The representation $τ◦η_*$ is solvable because it factors via the solvable
  group $G_γ$.  Better still, the representation $τ◦η_*$ is in fact trivial, or
  else there would be a non-trivial Abelian subrepresentation, contradicting
  triviality of the Abelianised push-forward $(ι_1◦γ_1)^{(ab)}_*$.  As a
  consequence, we find that $η^* Ω¹_X$ is isomorphic to a direct sum of line
  bundles,
  \[
    ∃\, ℒ ∈ \Pic(\what{F}_1) : η^* Ω¹_X ≅ ℒ^{⊕ n}.
  \]
  Recalling from Sequence~\eqref{eq:trztu-s} that $η^* Ω¹_X$ contains the
  trivial, non-zero subbundle $Ω¹_{\what{F}_1/X}$, we find that $ℒ$ admits a
  section.  But then $η^* Ω¹_X$ is generically generated, and so is its quotient
  $Ω¹_{\what{F}_1}$.  By \cite[Rem.~2.2]{Fuj13}, the manifold $\what{F}_1$ is
  then of maximal Albanese dimension.  But then \cite[Thm.~4.2]{Fuj13} implies
  that the nef canonical bundle $K_{\what{F}_1}$ is semiample, as claimed.
  \qedhere~\mbox{(Claim~\ref{claim:6-25a-s})}
\end{proof}

\subsection*{Step 5-4: Abundance and description of $F$}
\approvals{Daniel & yes \\Stefan & yes\\ Thomas & yes}

In this step, we finally prove abundance for $F$ and describe its Iitaka
fibration.  According to Claim~\ref{claim:kF}, there are only two cases,
$κ(F) = 0$ and $κ(F) = 1$, which we consider separately.

\begin{claim}\label{claim:solvK0}
  If $κ(F)=0$, then $F$ is quasi-Abelian.
\end{claim}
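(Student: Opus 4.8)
The claim follows once we establish $K_F\sim_{\mathbb{Q}}0$. Indeed, $F$ is a klt space --- it is either $X$ itself, which carries only isolated cyclic quotient singularities by Consequence~\ref{consequence:isolated}, or a smooth proper subvariety of $X$ avoiding those singularities --- and the hypotheses of Theorem~\ref{thm:charQAbelian} pass to $F$ in the sense that, possibly after a further quasi-étale cover, the sheaf $\Omega^{[1]}_F$ is projectively flat with vanishing $\mathbb{Q}$-Chern classes. Together with $K_F\sim_{\mathbb{Q}}0$ this is exactly the input of \cite[Thm.~1.2]{LT18}, invoked already in Step~1, whence $F$ is quasi-Abelian. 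So the entire content of Claim~\ref{claim:solvK0} is the abundance statement that $\kappa(F)=0$ forces the nef divisor $K_F$ to be $\mathbb{Q}$-linearly trivial. (In the sub-case $F\subsetneq X$ one may shorten this: there the conormal sheaf $N^{*}_{F/X}$ is trivial, so the projectively flat --- hence, for every polarisation, polystable --- bundle $\Omega^{1}_X|_F$ contains a trivial subsheaf of slope zero, which forces $\mu_H(\Omega^{1}_X|_F)=0$ for every ample $H$ on $F$, i.e.\ $K_F\cdot H^{n-2}=0$, and so $K_F\equiv 0$ by nefness.)

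To prove $K_F\sim_{\mathbb{Q}}0$ in general I would pass to the Galois cover $\gamma\colon\widehat{F}\to F$ fixed in Notation~\ref{not:50-20}; this loses nothing, since a quasi-étale cover of $\widehat{F}$ by an Abelian variety composes with $\gamma$ to a quasi-étale cover of $F$, so quasi-Abelianity descends. As the Kodaira dimension is an étale invariant, $\kappa(\widehat{F})=\kappa(F)=0$. By Claim~\ref{claim:uze} and Notation~\ref{not:50-20} the Albanese map $\alb(\widehat{F})\colon\widehat{F}\to\Alb(\widehat{F})$ is non-trivial, and since $\kappa(\widehat{F})=0$ Kawamata's structure theorem for varieties of Kodaira dimension zero (applied, if $\widehat{F}$ is singular, to a resolution whose Kodaira dimension is zero by Claim~\ref{claim:Kodairadim2}) shows that it has connected fibres; let $F_1$ denote its general fibre, the variety of Construction~\ref{cons:itAlb-s}. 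Kawamata's additivity theorem over Abelian bases, \cite[Cor.~1.2]{Kawamata85}, gives $0=\kappa(\widehat{F})\geq\kappa(F_1)$, hence $\kappa(F_1)=0$; combined with the semiampleness of $K_{F_1}$ from Claim~\ref{claim:6-25a-s} this yields $K_{F_1}\sim_{\mathbb{Q}}0$, and the proof of that claim also shows $F_1$ to be of maximal Albanese dimension. By Kawamata's characterisation of Abelian varieties, $F_1$ is then birational --- hence, being a smooth minimal model, isomorphic --- to an Abelian variety.

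It remains to propagate $\mathbb{Q}$-triviality of the canonical class from the fibre to the total space. For this I would feed the fibration $\alb(\widehat{F})$ into the Fujino--Mori canonical bundle formula: because $K_{F_1}\sim_{\mathbb{Q}}0$ one obtains $K_{\widehat{F}}\sim_{\mathbb{Q}}\alb(\widehat{F})^{*}\bigl(K_{\Alb(\widehat{F})}+B+M\bigr)$ with $B\geq 0$ the discriminant part and $M$ the nef moduli part, so --- using $K_{\Alb(\widehat{F})}\sim_{\mathbb{Q}}0$ --- the pseudo-effective class $B+M$ on $\Alb(\widehat{F})$ has $\kappa\bigl(\Alb(\widehat{F}),\,B+M\bigr)=\kappa(\widehat{F})=0$. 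On an Abelian variety an effective $\mathbb{Q}$-divisor of Iitaka dimension zero is zero and a nef class of Iitaka dimension zero is numerically trivial; one therefore gets $B=0$ and $M\equiv 0$, whence $K_{\widehat{F}}\equiv 0$. Nakayama's abundance theorem for numerically trivial canonical classes, \cite[Cor.~4.9]{Nakayama04} --- already used in the proof of Theorem~\ref{split} --- then upgrades this to $K_{\widehat{F}}\sim_{\mathbb{Q}}0$, and hence $K_F\sim_{\mathbb{Q}}0$.

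The main obstacle is precisely this last propagation step. Proposition~\ref{prop:abundance} is tailored to general-type bases and is of no use over the Abelian base $\Alb(\widehat{F})$, so one is forced to combine the semiampleness already established on the general Albanese fibre with the positivity built into the canonical bundle formula and with the rigidity of pseudo-effective divisors on Abelian varieties. Everything upstream of that --- the reduction through \cite[Thm.~1.2]{LT18}, the passage to $\widehat{F}$, and the identification of $F_1$ with an Abelian variety --- is a fairly mechanical assembly of results already in hand.
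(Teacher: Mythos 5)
Your reduction works only in the cases the paper labels (A) and (B), i.e.\ when $K_F$ is already numerically trivial; there your use of \cite[Thm.~1.2]{LT18} resp.\ the restriction argument is essentially the paper's own treatment (the paper uses Item~\ref{il:6-5-2} of Lemma~\ref{lem:6-5} for $F\subsetneq X$). The genuine gap sits in your ``propagation'' step, which is exactly the paper's hard case (C), $K_{\what{F}}\not\equiv 0$. The identity $K_{\what{F}}\sim_{\mathbb{Q}}\alb(\what{F})^*\bigl(K_{\Alb(\what{F})}+B+M\bigr)$ is the canonical bundle formula for an \emph{lc-trivial} fibration, i.e.\ it presupposes that $K_{\what{F}}$ is $\mathbb{Q}$-linearly trivial over the base, not merely on the general fibre. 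Knowing only $K_{F_1}\sim_{\mathbb{Q}}0$ on general fibres, Fujino--Mori gives the formula only up to effective correction divisors on the total space (vertical divisors over the degeneration locus, divisors not dominating the base), and showing that these vanish for the nef divisor $K_{\what{F}}$ is precisely the abundance-type statement ``$\kappa=0$, $\nu\le 1$ $\Rightarrow$ $\nu=0$'' you are trying to prove. This is where the paper invests all its effort in case (C): Lai's theorem \cite[Thm.~4.2]{Lai11} to produce a good minimal model of a resolution, uniruledness of the resulting exceptional components via \cite[Cor.~1.5]{HMcK07}, the Negativity Lemmas~\ref{lem:negativity1} and \ref{lem:negativity2} to place the rational curves away from the finitely many singular points, and finally the no-rational-curve statement of Item~\ref{il:6-5-1} of Lemma~\ref{lem:6-5} to reach a contradiction. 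Your sketch silently assumes these error terms away; a further (smaller) issue is that the moduli part $M$ is only b-nef, i.e.\ nef on a sufficiently high birational model of the base, not necessarily on $\Alb(\what{F})$ itself, so the ``nef of Iitaka dimension zero on an Abelian variety'' argument does not apply verbatim either.

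Two further points. The parenthetical shortcut for $F\subsetneq X$ is incorrect: a holomorphically projectively flat bundle (a $\PGL$-representation) need not be polystable, nor even semistable, for any polarisation --- the flat rank-two bundle attached to the uniformisation of a curve of genus at least two contains a sub-line bundle of positive degree --- and even granting semistability, a trivial subsheaf only bounds the slope from \emph{below}, so it cannot force $\mu_H\bigl(Ω¹_X|_F\bigr)=0$. Tellingly, your parenthetical argument never uses $\kappa(F)=0$, so it would equally ``prove'' $K_F\equiv 0$ in the situation of Claim~\ref{claim:solvK1}, where $\nu(F)=\kappa(F)=1$. Finally, the citation \cite[Cor.~1.2]{Kawamata85} concerns bases of general type; over the Abelian base you need subadditivity for fibre spaces whose general fibre has a good minimal model (available from the main theorem of the same paper, using Claim~\ref{claim:6-25a-s}), so that step is fixable but not as quoted.
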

\begin{proof}[Proof of Claim~\ref{claim:solvK0}]
  To keep this proof readable, we consider three cases separately.
  \begin{enumerate}
  \item\label{il:A} The divisor $K_F$ is numerically trivial and $F = X$.
    
  \item\label{il:B} The divisor $K_F$ is numerically trivial and $F ⊊ X$.
    
  \item\label{il:C} The divisor $K_F$ is not numerically trivial.
  \end{enumerate}
  Case~\ref{il:A} is easiest.  If $K_X$ is numerically trivial, then
  Equation~\eqref{eq:qcce} guarantees that
  \[
    \what{c}_2 \bigl( Ω^{[1]}_X \bigr) · [H]^{n-2} = 0.
  \]
  Then, $X$ is quasi-Abelian by \cite[Thm.~1.2]{LT18} and we are done.  In
  Case~\ref{il:B}, apply Item~\ref{il:6-5-2} of Lemma~\ref{lem:6-5} to the
  inclusion $F ⊂ X_{\reg}$ to find again that $F$ is quasi-Abelian.

  We consider Case~\ref{il:C} for the remainder of the proof, with the aim of
  producing a contradiction.  To begin, we consider a strong log resolution of
  $\what{F}$ and the Stein factorisation of the associated Albanese morphism.
  We obtain a commutative diagram as follows
  \[
    \begin{tikzcd}[column sep=2cm]
        & \wtilde{F} \ar[r, "a\text{, conn.~fibers}"] \ar[d, "π\text{, strong log res.}"'] & S \ar[r, "b\text{, finite}"] & \Alb(\wtilde{F}) \ar[d, "≅"] \\
      F & \what{F} \ar[rr, "alb_{\what{F}}"'] \ar[l, "γ\text{, étale}"] && \Alb(\what{F}).
    \end{tikzcd}
  \]
  By Claim~\ref{claim:uze}, $\Alb(\what{F})$ is not a point.  A general fibre
  $\wtilde{F}_1 ⊊ \wtilde{F}$ of $a$ is therefore smooth.
  
  Observe that the image $F_1 = π(\wtilde{F_1})$ is a connected component of a
  general fibre of $\alb_{\what{F}}$, which avoids the (finitely many)
  singularities of $\what{F}$.  The restriction
  $π|_{\wtilde{F}_1} : \wtilde{F}_1 → F_1$ is thus isomorphic, and
  Claim~\ref{claim:6-25a-s} applies to show that $\wtilde{F}_1$ is a good
  minimal model.  More is true.  Since $κ (F) = 0$ by assumption, we deduce from
  Claim~\ref{claim:Kodairadim2} that $κ(\wtilde{F}) = 0$.  Hence it follows from
  \cite[Thm.~4.2]{Lai11} that $\wtilde{F}$ has a good minimal model
  $\what{F}_{\min}$, which has terminal singularities and numerically trivial
  canonical class.

  We use the existence of $\what{F}_{\min}$ to describe $K_{\what{F}}$ in more
  detail.  To this end, resolve the singularities of the rational map
  $\what{F}_{\min} \dasharrow \what{F}$ and obtain morphisms as follows,
  \[
    \begin{tikzcd}[column sep=1.5cm]
      \what{F}_{\min} \ar[d, "\alb_{\what{F}_{\min}}"'] & Y \ar[r, "π_1", "\text{birational}"'] \ar[l, "π_2"', "\text{birational}"] \ar[d, "\alb_{Y}"] & \what{F} \ar[d, "\alb_{\what{F}}"] \\
      \Alb(\what{F}_{\min}) \ar[r, equal] & \Alb(Y) \ar[r, equal] & \Alb(\what{F})
    \end{tikzcd}
  \]
  The fact that $\what{F}_{\min}$ has terminal singularities implies that
  $K_Y \equiv \what{E}$, where $\what E ∈ ℚ\Div(Y)$ is effective with
  $π_2$-exceptional support.  Recalling that every fibre of $π_2$ is rationally
  connected by \cite[Cor.~1.5]{HMcK07}, we find that every component of
  $\supp \what{E}$ is uniruled.  The canonical class $K_{\what{F}}$ is then
  numerically equivalent to $E := (π_1)_*\what{E}$.  The $ℚ$-divisor $E$ is
  effective, nef and not trivial (because $K_{\what{F}}$ is not numerically
  trivial), and its components are again uniruled.  But the Albanese map
  $\alb_{\what{F}}$ contracts all rational curves in $\supp E$!  We claim that
  the image set $\alb_{\what{F}}(\supp E)$ cannot contain any isolated points.
  \begin{itemize}
  \item If $\dim \alb_{\what{F}}(\what{F}) ≥ 2$, this follows from nefness of
    $E \equiv K_{\what{F}} = γ^* K_X$ and from the Negativity
    Lemma~\vref{lem:negativity1}.
    
  \item If $\dim \alb_{\what{F}}(\what{F}) = 1$, this follows from nefness of
    $E \equiv K_{\what{F}} = γ^* K_X$, from the assumption that
    $0 = κ(F) = κ(\what{F})$ and from the Negativity
    Lemma~\vref{lem:negativity2}.
  \end{itemize}
  Either way, we find that $E$ contains rational curves that stay away from the
  finitely many singular points of $\what{F}$.  Since $K_X$ is nef, this
  contradicts Item~\ref{il:6-5-1} of Lemma~\ref{lem:6-5}.
  \qedhere~\mbox{(Claim~\ref{claim:solvK0})}
\end{proof}

\begin{claim}[Description of $F$ if $κ(F)=1$]\label{claim:solvK1}
  If $κ(F)=1$, then $K_F$ is semiample and the general fibre of the Iitaka
  fibration is quasi-Abelian.
\end{claim}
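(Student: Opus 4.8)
The plan is to obtain semiampleness of $K_F$ from abundance for minimal klt varieties with coinciding Kodaira and numerical dimension, and then to identify a general fibre of the resulting Iitaka fibration as a smooth projective variety with numerically trivial canonical class sitting inside $X_{\reg}$, so that the immersion criterion in Lemma~\ref{lem:6-5} applies verbatim.

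First I would record that, by Claim~\ref{claim:kF}, the assumption $κ(F)=1$ forces $κ(F)=ν(F)=1$. The fibre $F$ is klt — it equals $X$, or else is a smooth subvariety avoiding the finitely many singular points of $X$, cf.~Consequence~\ref{consequence:isolated} — and $K_F = K_X|_F$ is nef by Claim~\ref{claim:minimal}. Hence $F$ is a minimal model with $κ(F)=ν(F)$, and \cite[Cor.~6-1-13]{KMM87} — applied just as in the proof of Proposition~\ref{prop:abundance} — shows that $K_F$ is semiample.

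Next I would take $g : F → C$ to be the Iitaka fibration of $F$; semiampleness of $K_F$ makes this an honest morphism onto a smooth curve $C$ (a curve since $κ(F)=1$), and for $m$ sufficiently divisible $m·K_F$ is the pullback under $g$ of an ample divisor on $C$. By the projection formula $K_F|_{F'}$ is numerically trivial on every fibre $F'$ of $g$, and since $K_{F'} = K_F|_{F'}$ by adjunction along a smooth fibre, we obtain $K_{F'} \equiv 0$. A general fibre $F'$ avoids the finitely many singular points of $X$, hence lies in $X_{\reg}$ and is smooth by generic smoothness, and $F'$ is projective, being closed in the projective variety $F$.

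Finally, the inclusion $η : F' ↪ X_{\reg}$ is an immersion of a smooth projective variety into the smooth quasi-projective variety $X_{\reg}$, whose cotangent bundle is projectively flat by Step~1. Since $η^*K_{X_{\reg}} = K_{F'} \equiv 0$ is in particular nef and $K_{F'} \equiv 0$, Item~\ref{il:6-5-2} of Lemma~\ref{lem:6-5} applies and shows that $F'$ is quasi-Abelian, as claimed. I do not expect a genuine obstacle here; the two points that need a little care are verifying that $F$ itself — not merely a resolution — is a minimal klt model so that \cite[Cor.~6-1-13]{KMM87} applies to it directly, and that a general fibre of the Iitaka fibration really does stay away from $X_{\sing}$, so that Lemma~\ref{lem:6-5}, which is stated for smooth ambient spaces, can be invoked on $X_{\reg}$.
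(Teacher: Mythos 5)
Your argument is correct and follows the paper's proof essentially verbatim: both deduce $ν(F)=κ(F)=1$ from Claim~\ref{claim:kF}, obtain semiampleness of the nef divisor $K_F$ from \cite[Cor.~6-1-13]{KMM87}, use that $F$ has at worst isolated singularities to see that a general Iitaka fibre is smooth, lies in $X_{\reg}$ and has numerically trivial canonical class, and then conclude via Item~\ref{il:6-5-2} of Lemma~\ref{lem:6-5}. The two points of care you flag (that $F$ itself is a klt minimal model and that the general fibre avoids $X_{\sing}$) are exactly the points the paper relies on, so no further work is needed.
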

\begin{proof}[Proof of Claim~\ref{claim:solvK1}]
  If $κ(F)=1$, then $ν(F) = κ(F)$ by Claim~\ref{claim:kF}.  It follows that
  $K_F$ is semiample, \cite[Cor.~6-1-13]{KMM87}.  As $F$ has isolated
  singularities at worst, the general fibre $A$ of the associated Iitaka
  fibration $F → B$ is smooth, contained in $X_{\reg}$, and has numerical
  trivial canonical class.  As before, use Item~\ref{il:6-5-2} of
  Lemma~\ref{lem:6-5} to find that $A$ is quasi-Abelian.
  \qedhere~\mbox{(Claim~\ref{claim:solvK1})}
\end{proof}

\subsection*{Step 6: Abundance for $X$, description of the Iitaka fibration}
\approvals{Daniel & yes \\Stefan & yes\\ Thomas & yes}

We have seen in the previous step that the general fibres of the Shafarevich map
are abundant.  As we will see now, this implies Abundance for $X$.

\begin{claim}[Abundance for $X$]\label{claim:AbdX}
  The canonical bundle $K_X$ is semiample.
\end{claim}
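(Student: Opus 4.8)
The plan is to obtain the semiampleness of $K_X$ as an application of the special case of the abundance conjecture recorded in Proposition~\ref{prop:abundance}, applied to the Shafarevich fibration $f := \sh^K(X) : X \dasharrow \Sh^K(X)$ constructed in Step~4. First I would dispose of the degenerate possibility that $\Sh^K(X)$ is a single point. In that situation the general fibre $F$ equals $X$ itself, so Claim~\ref{claim:kF} forces $κ(X) ∈ \{0,1\}$; if $κ(X)=0$, then Claim~\ref{claim:solvK0} shows that $X$ is quasi-Abelian, hence $K_X$ is torsion and in particular numerically trivial, contradicting Assumption~\ref{asswlog:knnt}, so $κ(X)=1$ and Claim~\ref{claim:solvK1} (read with $F=X$, using that $K_X$ is nef by Claim~\ref{claim:minimal}) directly gives that $K_X$ is semiample.

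From now on I would assume that $\Sh^K(X)$ is positive-dimensional and verify the hypotheses of Proposition~\ref{prop:abundance} for $f$. The map $f$ is dominant, almost holomorphic and has connected fibres, since it is a Shafarevich map. Its target $\Sh^K(X)$ is smooth by construction and of general type by Claim~\ref{claim:6-30}, which gives hypothesis~\ref{il:AS1}. The total space $X$ is klt by assumption and $K_X$ is nef by Claim~\ref{claim:minimal}, giving~\ref{il:AS2}. Since $X$ has only finitely many singular points by Consequence~\ref{consequence:isolated}, and the fibres of $\sh^K(X)$ are pairwise disjoint over a dense open subset of $X$, a general fibre $F$ avoids $X_{\sing}$; hence $X$ is smooth around general fibres, and $F$ itself is smooth. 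This is~\ref{il:AS3} together with the first half of~\ref{il:AS4}.

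The remaining point — that the general fibre $F$ is a good minimal model — is exactly where the work of Step~5 enters. By Claim~\ref{claim:kF} we have $κ(F) ∈ \{0,1\}$. If $κ(F)=1$, then $K_F$ is semiample by Claim~\ref{claim:solvK1}, so $F$ is a good minimal model. If $κ(F)=0$, then $F$ is quasi-Abelian by Claim~\ref{claim:solvK0}; since $F$ is smooth, purity of the branch locus upgrades the defining quasi-étale cover to an honest finite étale cover by an Abelian variety, so $K_F$ is $ℚ$-torsion, hence semiample, and $F$ is again a good minimal model. Either way hypothesis~\ref{il:AS4} holds, and Proposition~\ref{prop:abundance} yields that $X$ is a good minimal model, i.e.\ that $K_X$ is semiample.

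I expect essentially no new geometric difficulty in this step: Proposition~\ref{prop:abundance} already packages the relevant $C_{n,m}$-type additivity and the passage from $κ=ν$ to semiampleness, and Step~5 supplies the structure of the general fibre. The only things that require care are the case split on $\dim \Sh^K(X)$ and the (routine) remark that a \emph{smooth} quasi-Abelian fibre has semiample canonical bundle; that bookkeeping, rather than any substantial argument, is the main obstacle here.
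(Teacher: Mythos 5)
Your proposal is correct and follows the paper's own argument: the same case split on whether $\Sh^K(X)$ is a point (handled via Claims~\ref{claim:solvK0} and \ref{claim:solvK1}) versus positive-dimensional, in which case Proposition~\ref{prop:abundance} is applied to the Shafarevich map with hypotheses \ref{il:AS1}--\ref{il:AS4} verified from Claims~\ref{claim:6-30}, \ref{claim:minimal}, Consequence~\ref{consequence:isolated}, and Claims~\ref{claim:solvK0}/\ref{claim:solvK1}. The extra details you supply (the contradiction with Assumption~\ref{asswlog:knnt} in the point case, and the remark that a smooth quasi-Abelian fibre has torsion, hence semiample, canonical bundle) are correct elaborations of what the paper leaves implicit.
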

\begin{proof}[Proof of Claim~\ref{claim:AbdX}]
  If $\Sh^K(X)$ is a point, then $F = X$ and we have seen in
  Claim~\ref{claim:kF} that $0 ≤ κ(F) = κ(X) ≤ 1$.  We know that abundance holds
  because the two cases have been described in Claims~\ref{claim:solvK0} and
  \ref{claim:solvK1}, respectively.  If $\Sh^K(X)$ is positive-dimensional, then
  we can apply Proposition~\ref{prop:abundance} to the Shafarevich map.
  \begin{itemize}
  \item Assumption~\ref{il:AS1} is satisfied by Claim~\ref{claim:6-30}
    
  \item Assumption~\ref{il:AS2} is satisfied as nefness of $K_X$ has been shown
    in Claim~\ref{claim:minimal}.
    
  \item Assumption~\ref{il:AS3} holds since $X$ has only isolated singularities,
    see Consequence~\ref{consequence:isolated}.

  \item Assumption~\ref{il:AS4} has been shown in Claims~\ref{claim:solvK0} and
    \ref{claim:solvK1}, respectively.
  \end{itemize}
  The claim thus follows.  \qedhere~\mbox{(Claim~\ref{claim:AbdX})}
\end{proof}

\begin{rem}
  Claim~\ref{claim:AbdX} implies in particular that
  \begin{itemize}
  \item the Iitaka fibration has a positive-dimensional base by
    Assumption~\ref{asswlog:knnt}, and
    
  \item the fibers of the Iitaka fibration are quasi-Abelian by
    Item~\ref{il:6-5-2} of Lemma~\ref{lem:6-5}.
  \end{itemize}
\end{rem}

We denote the Iitaka fibration by $\iit(X) : X → \Iit(X)$.

\begin{rem}[Ambro’s canonical bundle formula]\label{rem:ACBF}
  Ambro’s canonical bundle formula for projective klt pairs,
  \cite[Thm.~4.1]{MR2134273} or \cite[Thm.~3.1]{MR2944479}, provides us with an
  effective $ℚ$-divisor $Δ$ on $\Iit(X)$ that makes the pair
  $\bigl( \Iit(X), Δ \bigr)$ klt.
\end{rem}

\subsection*{Step 7: Abelian group schemes}
\approvals{Daniel & yes \\Stefan & yes\\ Thomas & yes}

Following the ideas of Jahnke-Radloff, \cite{MR3030068}, we use Kollár's
characterisation of étale quotients of Abelian group schemes to show that the
Iitaka fibration of a suitable étale cover of $X$ is birational to an Abelian
group scheme.

\begin{claim}\label{claim:glfg}
  There exists an étale cover $γ : \what{X} → X$ whose Iitaka fibration is
  birational to an Abelian group scheme over a smooth projective base admitting
  a level three structure.  More precisely, there exists a commutative diagram
  \begin{equation}\label{eq:glfg}
    \begin{tikzcd}[column sep=2cm]
      A \ar[r, dashed, "φ\text{, birational}"] \ar[d, "α"] & \what{X} \ar[r, "\text{étale}"] \ar[d, "\iit(\what{X})"] & X \\
      S \ar[r, "ψ\text{, birational}"'] \ar[u, bend left, "σ"] &
      \Iit(\what{X})
    \end{tikzcd}
  \end{equation}
  where $S$ is smooth and projective, and $α$ is a family of Abelian varieties
  admitting a level three structure.
\end{claim}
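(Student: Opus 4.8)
The plan is to deduce Claim~\ref{claim:glfg} from Kollár's characterisation of étale quotients of Abelian group schemes, in the form of Proposition~\ref{prop:K63}, applied to the Iitaka fibration $\iit(X) : X \to \Iit(X)$, and then to upgrade the resulting Abelian group scheme to one carrying a level-three structure by passing to one further étale cover. First I would verify the hypotheses of Proposition~\ref{prop:K63} for the fibration $f = \iit(X)$. Since $X$ is klt we may take $\Delta = 0$. By the remark following Claim~\ref{claim:AbdX} the general fibre $X_y$ is quasi-Abelian, and as it avoids the finitely many isolated singularities of $X$ by Consequence~\ref{consequence:isolated} it is smooth; purity of the branch locus then promotes the quasi-étale cover from an Abelian variety to an honest finite étale one, so $X_y$ has a finite étale cover birational to an Abelian variety. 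The remaining --- and substantial --- hypothesis is that $X$ has generically large fundamental group on $X_y$; I discuss this below.

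Granting that hypothesis, Proposition~\ref{prop:K63} supplies an étale cover $\gamma_1 : \what{X}_1 \to X$ such that the Stein factorisation $\what{a}_1 : \what{X}_1 \to \what{Y}_1$ of $\iit(X) \circ \gamma_1$ is birational to an Abelian group scheme $\alpha_1 : A_1 \to S_1$ over a smooth proper base. Because $\gamma_1$ is étale, $K_{\what{X}_1} = \gamma_1^* K_X$; this class is semiample of Iitaka dimension $\kappa(X)$ and, since $K_X$ is the pull-back of an ample class from $\Iit(X)$, it is the pull-back of an ample class from $\what{Y}_1$, so $\what{a}_1$ is the Iitaka fibration of $\what{X}_1$, up to birational equivalence of the base. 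To install the level-three structure I would then proceed as follows. In characteristic zero the three-torsion subgroup scheme $A_1[3] \to S_1$ is finite étale with finite monodromy, hence trivialised by a connected finite étale cover $S_2 \to S_1$; then $A_2 := A_1 \times_{S_1} S_2 \to S_2$ is again an Abelian group scheme over a smooth proper base, now carrying a level-three structure and the section pulled back from $\sigma$. The base-change map $A_2 \to A_1$ is finite étale, so it corresponds to a finite-index subgroup of $\pi_1(A_1)$. Since $A_1$ is smooth and $\what{X}_1$ is klt, a resolution of $\what{X}_1$ has the same fundamental group as $\what{X}_1$ --- Takayama's theorem, exactly as used in the proof of Proposition~\ref{prop:K63} --- so $\pi_1(\what{X}_1) \cong \pi_1(A_1)$, and the corresponding finite-index subgroup cuts out a finite étale cover $\what{X} \to \what{X}_1$ birational to $A_2$. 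The composite $\gamma : \what{X} \to \what{X}_1 \to X$ is then étale, and re-running the canonical-bundle argument identifies $\iit(\what{X})$ with $\alpha_2 : A_2 \to S_2$ up to birational equivalence. This produces diagram~\eqref{eq:glfg} with $A := A_2$, $S := S_2$ smooth and projective, $\alpha := \alpha_2$ a family of Abelian varieties with level-three structure and section $\sigma$, and $\phi$, $\psi$ the two birational maps.

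The main obstacle is checking that $X$ has generically large fundamental group on $X_y$. Since $X_y$ is quasi-Abelian, $\pi_1(X_y)$ is virtually $\bZ^{2\dim X_y}$, and for any positive-dimensional $Z \subseteq X_y$ through a very general point the image of $\pi_1(\overline{Z}) \to \pi_1(X_y)$ is infinite --- a positive-dimensional subvariety of an Abelian variety generates a positive-dimensional subtorus, and one reduces to that case by pulling back along the Abelian étale cover of $X_y$. The real point is to prevent $\pi_1(X_y) \to \pi_1(X)$ from contracting such an infinite subgroup, equivalently from contracting a translate of a positive-dimensional subtorus of the Abelian cover of $X_y$. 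This is controlled by the analysis of the Shafarevich map from Steps~4--5: since the general Shafarevich fibre $F$ has $\kappa(F) \in \{0,1\}$, the variety $X_y$ is contracted by $\sh^K(X)$, so by Fact~\ref{fact:Sh2a} its fundamental group maps to a finite subgroup of $G_X / \Rad(G_X)$. Combining this with maximal quasi-étaleness of $X$ and the projectively flat structure on $\Omega^{[1]}_X$ --- and using Claim~\ref{claim:ngen} to exclude the degenerate alternative in which $X$ would acquire an étale cover possessing a factor of general type --- one then rules out the contracted subtorus. I expect this verification to be the technical heart of the argument; the remaining steps are routine bookkeeping.
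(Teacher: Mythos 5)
Your overall architecture (apply Proposition~\ref{prop:K63} to the Iitaka fibration, then install the level-three structure by trivialising the $3$-torsion on a finite étale cover of the base and transporting that cover back to an étale cover of $X$ via $π_1$ of resolutions and Takayama's theorem) matches the paper's Step~7 quite closely; the paper transports the cover slightly differently, using Ambro's canonical bundle formula to see that the Iitaka base is klt and then factoring the Stein factorisation of $S → \wcheck{Y}$ through an étale cover of $\wcheck{Y}$, with $\what{X}$ defined as a fibre product, but your variant via $π_1(A_1) ≅ π_1(\what{X}_1)$ is workable. The problem is the hypothesis you yourself flag: that $X$ has generically large fundamental group on the general Iitaka fibre $X_y$. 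This is not ``routine bookkeeping to be checked''; it is the substantive input, and you do not prove it. The paper deals with it by invoking \cite[Prop.~5.1]{MR3030068} and observing that Jahnke--Radloff's proof applies verbatim because $X$ has only finitely many (isolated) singularities; your proposal neither cites that result nor supplies a replacement argument.

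Moreover, the sketch you offer points in the wrong direction. Granting your (unproved) assertion that $X_y$ is contracted by $\sh^K(X)$, Fact~\ref{fact:Sh2a} only tells you that $π_1(X_y)$ has \emph{finite} image in $\factor{G_X}{\Rad(G_X)}$. That is an obstruction, not a help: to verify generic largeness you must show that for every positive-dimensional $Z ⊆ X_y$ through a very general point the image of $π_1(\overline{Z})$ in the full group $π_1(X)$ is infinite, and the finiteness statement means all of this infiniteness has to be detected in the solvable part of the representation (or in the kernel of $τ$ altogether). Your concluding sentence --- ``combining this with maximal quasi-étaleness and the projectively flat structure \ldots one then rules out the contracted subtorus'' --- is exactly the step that needs an actual argument (in Jahnke--Radloff it occupies the proof of their Prop.~5.1), and nothing in Claims~\ref{claim:ngen} or \ref{claim:6-30} as quoted does this for you. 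As written, the proposal therefore has a genuine gap at the one hypothesis of Proposition~\ref{prop:K63} that is not formal.
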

\begin{proof}[Proof of Claim~\ref{claim:glfg}]
  We construct from right to left a diagram as follows,
  \begin{equation}\label{eq:K12}
    \begin{tikzcd}[column sep=2cm]
      A \ar[d, "α"'] \ar[r, "\text{étale}"] & \wcheck{A} \ar[d, "\wcheck{α}"'] \ar[r, dashed, "\text{birational}"] & \wcheck{X} \ar[d, "\wcheck{a}"'] \ar[r, "γ\text{, étale}"] & X \ar[d, "\iit(X)"] \\
      S \ar[r, "γ_S \text{, étale}"'] & \wcheck{S} \ar[r, "ρ\text{, birational}"'] & \wcheck{Y} \ar[r, "β\text{, finite}"'] & \Iit(X)
    \end{tikzcd}
  \end{equation}
  where $\wcheck{a}$ has connected fibers, where $α$ and $\wcheck{α}$ are
  Abelian group schemes and where $α$ has the additional structure of a family
  of Abelian varieties with level three structure.  To begin, we claim that the
  variety $X$ has generically large fundamental group along the general fibre
  $X_y$ of the Iitaka fibration.  In case where $X$ is smooth, this claim has
  been shown in \cite[Prop.~5.1]{MR3030068}.  We leave it to the reader to check
  that in our case, where $X$ has only finitely many singularities, the proof of
  \cite[Prop.~5.1]{MR3030068} still applies verbatim.  Given that the fibers of
  the Iitaka fibration are quasi-Abelian, Proposition~\ref{prop:K63} will
  therefore apply to yield the right and middle square of Diagram~\eqref{eq:K12}
  --- strictly speaking, Proposition~\ref{prop:K63} gives $ρ$ only as a rational
  map, but we can always blow up to make it a morphism, and then pull back
  $\wcheck{A}$.  Once this is done, we can find a suitable étale cover
  $S → \wcheck{S}$ such that the pull-back $A := \wcheck{A} ⨯_{\wcheck{S}} S$ is
  an Abelian group scheme with level three structure.  We note the following
  property for later reference, which holds by construction.
  \begin{enumerate}
  \item\label{il:birat} If $s ∈ S$ is any general point, with image
    $y ∈ \Iit(X)$, then the induced map between fibres, $A_s \dasharrow X_y$
    is birational.
  \end{enumerate}
  
  We still need to construct $\what{X}$.  We begin its construction with the
  observation that $K^{[m]}_X = \iit(X)^* (H)$ for a suitable number $m ∈ ℕ^+$
  and a suitable ample $H ∈ \Div\bigl(\Iit(X)\bigr)$.  But then,
  \begin{equation}\label{eq:hjfg}
    K^{[m]}_{\wcheck{X}} = γ^* K^{[m]}_X = γ^* \iit(X)^* (H) = \wcheck{a}^{\,*} (β^* H),
  \end{equation}
  where $β^* H$ is ample on $\what{Y}$; this shows that $\wcheck{a}$ is the
  Iitaka fibration for $\wcheck{X}$ and allows to apply Ambro's canonical bundle
  formula, Remark~\ref{rem:ACBF}, which shows that $\wcheck{Y}$ is the
  underlying space of a klt pair.  We are interested in this, because
  \cite[Thm.~1.1]{Takayama2003} then asserts that
  $π_1\bigl(\wcheck{S}\,\bigr) = π_1\bigl( \wcheck{Y}\,\bigr)$.  As a
  consequence, we find that the Stein factorisation of the morphism
  $S → \wcheck{Y}$ factors via an étale cover of $\wcheck{Y}$ and therefore
  gives a diagram,
  \[
    \begin{tikzcd}[column sep=2cm]
      A \ar[r, "φ"', dashed] \ar[rr, dashed, bend left=15] \ar[d, "α"'] & \what{X} \ar[r, "\text{étale}"'] \ar[d] & \wcheck{X} \ar[d, "\wcheck{a}"] \\
      S \ar[r, "ψ \text{, birational}"] \ar[rr, "ρ ◦ γ_S"', bend right=15] & \what{Y} \ar[r, "γ_Y\text{, étale}"] & \wcheck{Y},
    \end{tikzcd}
  \]
  where $\what{X} := \wcheck{X} ⨯_{\wcheck{Y}} \what{Y}$ and where the map $φ$
  is induced by the universal property of the fibre product.  Observe, exactly
  as in \eqref{eq:hjfg}, that the natural projection map $\what{X} → \what{Y}$
  is the Iitaka fibration of $\what{X}$,
  $\iit(\what{X}) : \what{X} → \Iit(\what{X}) = \what{Y}$.
  Assertion~\ref{il:birat} implies that $φ$ is birational.
  \qedhere~\mbox{(Claim~\ref{claim:glfg})}
\end{proof}

\begin{claim}\label{claim:glfg2}
  There exists an étale cover $γ : \what{X} → X$ whose Iitaka fibration is
  birational to an Abelian group scheme over $\Iit(\what{X})$ with level three
  structure.  In other words, there exists a commutative diagram
  \begin{equation}\label{eq:glfg2}
    \begin{tikzcd}[column sep=2cm]
      A \ar[r, dashed, "φ\text{, birational}"] \ar[d, "α"] & \what{X} \ar[d, "\iit(\what{X})"] \\
      \Iit(\what{X}) \ar[r, equal] \ar[u, bend left, "σ"] & \Iit(\what{X})
    \end{tikzcd}
  \end{equation}
  where $α$ is a family of Abelian varieties with level three structure.
\end{claim}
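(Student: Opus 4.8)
The plan is to descend the Abelian group scheme $\alpha : A \to S$ constructed in Claim~\ref{claim:glfg} along the birational morphism $\psi : S \to \Iit(\what{X})$, exploiting the rigidity of families of Abelian varieties carrying a level‑three structure. Write $Y := \Iit(\what{X})$ and $g := \dim A - \dim S$, let $\cA$ be the fine moduli space of polarised $g$-dimensional Abelian varieties with a level‑three structure, and let $u : \mathcal{U} \to \cA$ be its universal family, with zero section $e : \cA \to \mathcal{U}$. Recall that $\cA$ is a smooth quasi-projective variety, realised as the quotient of the Siegel upper half space by a torsion-free arithmetic group, and is therefore Kobayashi hyperbolic; in particular, $\cA$ contains no rational curves. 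The family $\alpha$, together with its chosen polarisation and level‑three structure, is classified by a morphism $j_S : S \to \cA$ with $A \cong_S S \times_{\cA}\mathcal{U}$.

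The first and main step is to show that $j_S$ factors through $\psi$. Since $S$ is smooth, the birational morphism $\psi : S \to Y$ is a resolution of singularities, so by \cite[Cor.~1.5]{HMcK07} every fibre $F_y := \psi^{-1}(y)$ is rationally chain connected. As $\cA$ carries no rational curves, the restriction $j_S|_{F_y} : F_y \to \cA$ is constant for every $y \in Y$. Moreover $\psi$ is proper and $Y$ is normal --- in fact klt, by Ambro's canonical bundle formula, Remark~\ref{rem:ACBF} --- so $\psi_*\mathcal{O}_S = \mathcal{O}_Y$. Hence $j_S$ descends: there is a morphism $\bar{j} : Y \to \cA$ with $j_S = \bar{j}\circ\psi$.

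It then remains to assemble Diagram~\eqref{eq:glfg2}. Set $A_Y := Y \times_{\cA}\mathcal{U}$, with projection $\alpha_Y : A_Y \to Y$, zero section $\sigma : Y \to A_Y$ pulled back from $e$, and polarisation and level‑three structure inherited from $u$; then $\alpha_Y$ is a family of Abelian varieties over $Y = \Iit(\what{X})$ with level‑three structure, as required. Using $j_S = \bar{j}\circ\psi$ and the cancellation property of fibre products, one obtains $A \cong S \times_{\cA}\mathcal{U} \cong S \times_Y A_Y$, so the projection $A \to A_Y$ is the base change of $\psi$ along the dominant morphism $\alpha_Y$, and is therefore a birational morphism over $Y$. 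Composing its inverse with the birational map $\varphi : A \dashrightarrow \what{X}$ of Claim~\ref{claim:glfg} yields a birational map $A_Y \dashrightarrow \what{X}$ commuting with $\alpha_Y$ and $\iit(\what{X})$; renaming $A_Y$ as $A$ produces Diagram~\eqref{eq:glfg2}.

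The delicate point is the factorisation of $j_S$ through $\psi$. The exceptional locus of $\psi$ may well be divisorial, so one cannot simply extend the Abelian scheme over $Y$ by Grothendieck's theorem on extension across codimension-two subsets; instead, the level-three structure enters in an essential way precisely because it forces the target $\cA$ to be hyperbolic, which in turn forces the period map to be locally constant along the rationally-chain-connected fibres of the resolution $\psi$. This is the klt analogue of the concluding argument of \cite{MR3030068}.
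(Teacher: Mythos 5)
Your proposal is correct and follows essentially the same route as the paper's proof: both descend the classifying map $S \to \cA_3$ along $\psi$ using rational chain connectedness of the fibres (\cite[Cor.~1.5]{HMcK07}, which applies because $\Iit(\what{X})$ underlies a klt pair by Remark~\ref{rem:ACBF}) together with the absence of rational curves in the fine moduli space, and then pull back the universal family. The only cosmetic difference is that you justify the absence of rational curves via hyperbolicity of the level-three moduli space rather than citing \cite[Lem.~5.9.3]{Kollar93}, and you spell out the fibre-product bookkeeping that the paper leaves implicit.
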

\begin{proof}[Proof of Claim~\ref{claim:glfg2}]
  Consider Diagram~\eqref{eq:glfg} and recall that Abelian varieties with level
  three structure admit a fine moduli space $𝒜_3$ with universal family
  $\sU_3 →𝒜_3$, and that moreover $𝒜_3$ does not contain any rational curve
  \cite[Lem.~5.9.3]{Kollar93}.  On the other hand, while observing
  Remark~\ref{rem:ACBF} recall from \cite[Cor.~1.5]{HMcK07} that the fibres of
  the morphism $ψ$ are rationally chain connected.  It follows that the moduli
  map $S → 𝒜_3$ factors via $ψ$ to give a morphism $\Iit(\what{X}) → 𝒜_3$.  To
  conclude, set $A := \sU_3 ⨯_{𝒜_3} \Iit(\what{X})$.
  \qedhere~\mbox{(Claim~\ref{claim:glfg2})}
\end{proof}

\subsection*{Step 8: End of proof}
\approvals{Daniel & yes \\Stefan & yes\\ Thomas & yes}

We end the proof by showing that $\what{X}$ itself is an Abelian group scheme
over a smooth base with ample canonical bundle.

\begin{claim}\label{claim:6-38}
  In the setting of Claim~\ref{claim:glfg2}, the rational map
  $τ := φ^{-1} : \what{X} \dasharrow A$ is a morphism.
\end{claim}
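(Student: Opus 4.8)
The plan is to resolve the indeterminacy of $τ$ and then to show that the resulting morphism to $A$ contracts every exceptional fibre. Concretely, let $\wtilde{W}$ be a resolution of singularities of the closure of the graph of $τ$ inside $\what{X} ⨯ A$, with the two induced morphisms $μ : \wtilde{W} → \what{X}$, which is proper and birational, and $\wtilde{τ} : \wtilde{W} → A$. Since $\what{X}$ is normal, it suffices to prove that $\wtilde{τ}$ is constant on every fibre of $μ$: once that is known, the image $Γ$ of $(μ,\wtilde{τ}) : \wtilde{W} → \what{X} ⨯ A$ is a closed subvariety that is proper, bijective and birational over $\what{X}$, hence isomorphic to $\what{X}$ by Zariski's Main Theorem, and $τ$ is then exhibited as the composition $\what{X} ≅ Γ → A$, a morphism. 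The single structural fact used throughout is that $φ$, hence also $τ$, is birational \emph{over} $\Iit(\what{X})$; that is, $α ◦ τ = \iit(\what{X})$ as rational maps, so that $α ◦ \wtilde{τ}$ and $\iit(\what{X}) ◦ μ$ are morphisms agreeing on a dense open set and therefore equal on all of $\wtilde{W}$.

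To establish the contraction statement, fix a point $z ∈ \what{X}$ and set $b := \iit(\what{X})(z)$. Because $\wtilde{W}$ is smooth and $\what{X}$ is klt, the fibre $μ^{-1}(z)$ is rationally chain connected by \cite[Cor.~1.5]{HMcK07}, and hence so is its image $\wtilde{τ}\bigl(μ^{-1}(z)\bigr) ⊆ A$. By the compatibility $α ◦ \wtilde{τ} = \iit(\what{X}) ◦ μ$, this image lies in the single fibre $A_b := α^{-1}(b)$; and $A_b$ is an Abelian variety, because in Claim~\ref{claim:glfg2} the space $A$ is constructed as the fibre product $\sU_3 ⨯_{\sA_3} \Iit(\what{X})$, so that the fibres of $α$ are exactly the fibres of the universal family $\sU_3 → \sA_3$. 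Since an Abelian variety contains no rational curves, it contains no positive-dimensional rationally chain connected subvariety, and therefore $\wtilde{τ}\bigl(μ^{-1}(z)\bigr)$ is a single point. This is precisely the contraction property required, and the claim follows.

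I do not anticipate a genuine obstacle; what needs care is organisational. One has to phrase everything \emph{relatively over} $\Iit(\what{X})$ so that the rational curves inside a $μ$-fibre get mapped into an honest fibre $A_b$ rather than merely into $A$, and one has to invoke the construction of $A$ in Claim~\ref{claim:glfg2} as a pullback of the universal family over $\sA_3$ in order to know that these fibres are Abelian varieties. With these points in place, the two external inputs — that a proper birational morphism from a smooth variety to a klt variety has rationally chain connected fibres \cite[Cor.~1.5]{HMcK07}, and that Abelian varieties contain no rational curves — combine with Zariski's Main Theorem to finish the argument.
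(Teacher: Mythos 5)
Your proof is correct and takes essentially the same route as the paper: the paper disposes of the claim in one line by citing \cite[Cor.~1.6]{HMcK07}, using exactly the two inputs you identify (klt singularities of $\what{X}$ and the absence of rational curves in the fibres of $α$). You merely unpack that citation, rederiving the extension statement from \cite[Cor.~1.5]{HMcK07} via the graph resolution, the compatibility over $\Iit(\what{X})$, and Zariski's Main Theorem, which is a faithful expansion of the same argument.
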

\begin{proof}[Proof of Claim~\ref{claim:6-38}]
  Given that $\what{X}$ is klt and that the fibres of $α$ do not contain any
  rational curves, the claim follows from \cite[Cor.~1.6]{HMcK07}.
  \qedhere~\mbox{(Claim~\ref{claim:6-38})}
\end{proof}

\begin{claim}\label{claim:6-39}
  The base variety $\Iit(\what{X})$ is smooth.  In particular, $A$ is smooth.
\end{claim}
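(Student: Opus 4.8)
The plan is to combine the birational morphism $\tau : \what X → A$ of Claim~\ref{claim:6-38} with two facts --- that $\what X$ has only finitely many singular points, and that $\what X_{\reg}$ contains no rational curve --- in order to show that $\tau$ is an isomorphism away from finitely many points of $A$; smoothness of $\Iit(\what X)$ then drops out by a dimension count on the Abelian scheme $α : A → \Iit(\what X)$.

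First I record that $\what X_{\reg}$ contains no rational curve. Indeed, $\Omega^1_{\what X_{\reg}}$ is the pull-back of $\Omega^1_{X_{\reg}}$ along the étale cover $\what X → X$ and is therefore projectively flat, by Assumption~\ref{asswlog:mqe} and \cite[Thm.~1.6]{GKP20a}; moreover $K_{\what X}$ is nef by Claim~\ref{claim:minimal}. Hence the argument proving Item~\ref{il:6-5-1} of Lemma~\ref{lem:6-5} applies verbatim to the smooth quasi-projective variety $\what X_{\reg}$: all that proof uses is that every non-constant morphism $\mathbb{P}^1 → \what X_{\reg}$ has non-negative intersection number with the nef divisor $K_{\what X}$. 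Write $\{p_1, …, p_k\} = \what X_{\sing}$ for the finitely many (isolated, cyclic quotient) singular points of $\what X$, cf.~Consequence~\ref{consequence:isolated}.

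Next I claim that $\tau$ is an isomorphism over $A ∖ F$, where $F := \tau(\{p_1, …, p_k\})$ is a finite subset of $A$. Note that $\tau$ is proper and birational, and that $A$ is normal, being smooth over the normal --- because klt, see Remark~\ref{rem:ACBF} --- base $\Iit(\what X)$. If $a ∈ A$ satisfies $\dim \tau^{-1}(a) ≥ 1$, then the fibre $\tau^{-1}(a)$ is rationally chain connected by \cite[Cor.~1.5]{HMcK07} (recall that $\what X$ is klt), hence covered by rational curves; by the first paragraph each of these meets $\what X_{\sing}$, so $\tau^{-1}(a) ∩ \{p_1, …, p_k\} ≠ ∅$ and therefore $a ∈ F$. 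Consequently $\tau$ restricts to a proper, quasi-finite, birational morphism over $A ∖ F$, which is thus an isomorphism by Zariski's Main Theorem (a proper quasi-finite morphism is finite, and a finite birational morphism to a normal variety is an isomorphism). Since $\{p_1, …, p_k\} ⊆ \tau^{-1}(F)$, this identifies $A ∖ F ≅ \what X ∖ \tau^{-1}(F)$ with an open subset of $\what X_{\reg}$; in particular $A$ is smooth away from the finite set $F$.

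Finally, the projection $α : A → \Iit(\what X)$ is smooth, being the base change of the smooth universal family $\sU_3 → \sA_3$ (cf.~Claim~\ref{claim:glfg2}), so $A_{\sing} = α^{-1}\bigl(\Iit(\what X)_{\sing}\bigr)$. We have just seen that $A_{\sing} ⊆ F$ is finite, whereas every fibre of $α$ is an Abelian variety of dimension $\dim \what X - \dim \Iit(\what X) = \dim \what X - \kappa(\what X) ≥ 1$, because $\what X$ is not of general type (Claim~\ref{claim:ngen}). Hence $\Iit(\what X)_{\sing} = ∅$, that is, $\Iit(\what X)$ is smooth; and then $A = \sU_3 \times_{\sA_3} \Iit(\what X)$ is itself smooth, being smooth over a smooth base. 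The main obstacle is the middle step: it is the interplay between the absence of rational curves in $\what X_{\reg}$ (projective flatness) and the rational chain connectedness of the fibres of $\tau$ (klt-ness) that confines the non-isomorphism locus of $\tau$ to a finite set, after which everything else is formal.
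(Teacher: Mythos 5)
Your overall strategy is the paper's own: play off rational chain connectedness of positive-dimensional fibres of $\tau$ against the absence of rational curves in $\what{X}_{\reg}$, using that $\what{X}$ has only finitely many singular points and that $\alpha$ is smooth. The one step that does not hold as you justify it is the appeal to \cite[Cor.~1.5]{HMcK07} ``recall that $\what{X}$ is klt''. In that result the klt hypothesis is on the \emph{target} of the proper birational morphism, not on the source: fibres of $g\colon Y \to X$ are rationally chain connected when $(X,\Delta)$ is klt (this is the statement that klt singularities are rationally chain connected). Klt-ness --- even smoothness --- of the source is not enough: blow up the vertex of the cone over an elliptic curve; the source is smooth, the map is birational, and the central fibre is the elliptic curve, which is not rationally chain connected. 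At the moment you invoke the corollary you know nothing about the singularities of $A$ beyond normality, so as written this is a genuine gap, and it is precisely the nontrivial input your argument is missing.

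It is also exactly the point the paper's proof takes care of before quoting Hacon--McKernan: by Ambro's canonical bundle formula (Remark~\ref{rem:ACBF}) there is an effective $\bQ$-divisor $\Delta$ on the base with $\bigl(\Iit(\what{X}),\Delta\bigr)$ klt, and since $\alpha$ is smooth the pair $(A,\alpha^*\Delta)$ is klt as well; \cite[Cor.~1.5]{HMcK07} then legitimately applies to $\tau\colon \what{X} \to A$. With that inserted, the rest of your argument is correct and is a mild rearrangement of the paper's: where you prove that $\tau$ is an isomorphism off the finite set $F$, conclude that $A_{\sing}$ is finite, and then rule out singular points of $\Iit(\what{X})$ by the dimension count along the smooth map $\alpha$, the paper derives the contradiction directly at a general point $a$ of the fibre over a hypothetical singular point $s$ of the base: generality of $a$ keeps $\tau^{-1}(a)$ inside $\what{X}_{\reg}$, positive-dimensionality of $\tau^{-1}(a)$ holds because $\tau$ resolves the singularity of $A$ at $a$, and the rational curves covering this fibre contradict Item~\ref{il:6-5-1} of Lemma~\ref{lem:6-5}. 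Both versions rest on the same ingredients; neither works without the klt structure on $A$ supplied by Remark~\ref{rem:ACBF}.
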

\begin{proof}[Proof of Claim~\ref{claim:6-39}]
  Assume not.  Let $s ∈ \Iit(\what{X})_{\sing}$ be any singular point, and let
  $a ∈ α^{-1}(s)$ be a general point of the fibre.  Note that the morphism $α$
  is smooth, so $a$ will be a singular point of $A$.  On the other hand, note
  that the general choice of $a$ guarantees that $τ^{-1}(a)$ does not contain
  any of the (finitely many) singular points of $X$.  It follows that $τ$
  resolves the singularity at $a$, and that the fibre $τ^{-1}(a)$ is thus
  necessarily positive-dimensional.

  There is more that we can say.  Recalling from Remark~\ref{rem:ACBF} that
  $\bigl( \Iit(X), Δ \bigr)$ is klt for a certain $ℚ$-divisor $Δ$ it follows
  from smoothness of $α$ that $(A, α^*Δ)$ is klt as well.  But then
  $τ^{-1}(a) ⊊ \iit(\what{X})^{-1}(s) ∩ \what{X}_{\reg}$ is covered by rational
  curves, violating Item~\ref{il:6-5-1} of Lemma~\ref{lem:6-5} from above.
  \qedhere~\mbox{(Claim~\ref{claim:6-39})}
\end{proof}

\begin{claim}\label{claim:6-40}
  The morphism $τ$ is isomorphic and $\what{X}$ is therefore smooth.
\end{claim}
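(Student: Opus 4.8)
The plan is to show that the birational morphism $τ : \what X → A$, which is a morphism by Claim~\ref{claim:6-38} and maps onto the \emph{smooth} variety $A$ by Claim~\ref{claim:6-39}, contracts neither a divisor nor a curve; then $τ$ is finite and hence, by Zariski's Main Theorem, an isomorphism, and $\what X ≅ A$ is smooth again by Claim~\ref{claim:6-39}. Two facts will be used throughout: $τ$ factors the Iitaka fibration, $\iit(\what X) = α ◦ τ$, and by Claim~\ref{claim:AbdX} the divisor $K_{\what X}$ is semiample with Iitaka fibration $\iit(\what X)$, so that $K_{\what X}·C = 0$ for every curve $C ⊂ \what X$ contracted by $\iit(\what X)$.

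First I would consider the discrepancy identity $K_{\what X} = τ^*K_A + E$, which is meaningful because $\what X$ is klt (so $K_{\what X}$ is $ℚ$-Cartier) and $A$ is smooth (so $K_A$ is Cartier).  The $ℚ$-divisor $E$ is $τ$-exceptional, i.e.\ $τ_*E = 0$; and since $A$ is smooth, every $τ$-exceptional prime divisor has centre of codimension $≥ 2$ on $A$ and hence discrepancy $≥ 1$, so $E$ is effective with support the divisorial part of $\mathrm{Exc}(τ)$.  Next I would check that $E$ is numerically $τ$-trivial: if $C ⊂ \what X$ is any curve with $τ(C)$ a point, then $τ_*C = 0$, hence $τ^*K_A·C = K_A·τ_*C = 0$; moreover such a $C$ is contracted by $\iit(\what X) = α◦τ$, so $K_{\what X}·C = 0$ by the remark above; therefore $E·C = 0$.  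Applying the Negativity Lemma \cite[Lem.~3.39]{KM98} to the $τ$-nef divisor $-E$ gives that $-E$ is effective, because $τ_*(-E) = 0$ is effective; together with $E ≥ 0$ this forces $E = 0$.

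Consequently $τ$ has no exceptional divisor, i.e.\ $\codim_{\what X}\mathrm{Exc}(τ) ≥ 2$.  As $A$ is smooth, hence locally factorial, any prime divisor $D$ on $\what X$ equals $τ^*τ_*D$ --- the two divisors agree away from codimension two and $τ$ has no exceptional divisor to carry the difference --- so $D·C = τ_*D·τ_*C = 0$ for every $τ$-contracted curve $C$.  Since a suitable multiple of a very ample divisor has positive degree on $C$, no such $C$ exists, so all fibres of $τ$ are finite; a proper birational morphism with finite fibres onto a normal variety is an isomorphism.  Hence $τ$ is an isomorphism and $\what X ≅ A$ is smooth.

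The step I expect to be most delicate is the sign bookkeeping in the second paragraph: one must be sure that the discrepancy correction $E$ is genuinely effective (this is exactly where smoothness of $A$, i.e.\ Claim~\ref{claim:6-39}, enters) and then read off its vanishing from the negativity lemma; once $τ$ is known to be small, the reduction to an isomorphism via local factoriality of $A$ and Zariski's Main Theorem is routine.  A more hands-on alternative, closer in spirit to Claims~\ref{claim:6-38} and \ref{claim:6-39}, would be to invoke \cite[Cor.~1.5]{HMcK07} to cover any positive-dimensional fibre of $τ$ by rational curves and then contradict Item~\ref{il:6-5-1} of Lemma~\ref{lem:6-5}; but ensuring that such a rational curve avoids the finitely many singular points of $\what X$ makes that route fiddlier than the discrepancy argument above.
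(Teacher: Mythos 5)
Your proof is correct, but it runs along a genuinely different track than the paper's. The paper disposes of the claim in two lines: if $\tau$ failed to be an isomorphism over some point $a$ of the smooth variety $A$, then $\tau^{-1}(a)$ would be positive-dimensional by Zariski's main theorem, and every section of $mK_{\what{X}}$ would vanish along this fibre --- such a section is the pull-back of a section of $mK_A$ twisted by the effective exceptional discrepancy divisor, whose support is all of $\mathrm{Exc}(\tau)$ because $A$ is smooth, hence $\mathbb{Q}$-factorial --- and this contradicts the base-point freeness of $|mK_{\what{X}}|$ coming from the semiampleness in Claim~\ref{claim:AbdX}. You instead make the discrepancy divisor $E = K_{\what{X}} - \tau^* K_A \geq 0$ the protagonist: the identity $mK_{\what{X}} = \iit(\what{X})^*(\text{ample}) = \tau^*\alpha^*(\text{ample})$ shows that $K_{\what{X}}$, and hence $E$, is numerically trivial on $\tau$-fibres, the negativity lemma \cite[Lem.~3.39]{KM98} then kills $E$, and local factoriality of $A$ (through $D = \tau^*\tau_* D$) excludes even small contractions, so that $\tau$ is finite and Zariski's main theorem concludes. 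The inputs are the same in both arguments --- Claims~\ref{claim:6-38}, \ref{claim:6-39} and \ref{claim:AbdX} --- but you exploit semiampleness only through the numerical fact that $K_{\what{X}}$ has degree zero on Iitaka-contracted curves, while the paper exploits it through global generation of a pluricanonical system; your version is longer, yet it makes explicit what the paper leaves implicit (effectivity and exceptionality of $E$, where smoothness of $A$ enters) and needs no purity statement for $\mathrm{Exc}(\tau)$, since the $D = \tau^*\tau_*D$ computation handles small contractions directly. One cosmetic point: since $E$ turns out to be $\tau$-numerically trivial, both $E$ and $-E$ are $\tau$-nef, so your application of the negativity lemma is fine exactly as phrased.
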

\begin{proof}[Proof of Claim~\ref{claim:6-40}]
  If $a ∈ A$ is any (smooth!) point where $τ$ is not isomorphic, then
  $τ^{-1}(a)$ is positive-dimensional, and every section of $K_{\what{X}}$ will
  necessarily vanish along $τ^{-1}(a)$.  That contradicts the semiampleness of
  $K_{\what{X}}$, Claim~\ref{claim:AbdX}.
  \qedhere~\mbox{(Claim~\ref{claim:6-40})}
\end{proof}

In the current, now nonsingular setup, the following has already been observed
in \cite[Proof of Lemma~5.1]{MR3030068}.

\begin{claim}\label{claim:6-41}
  The canonical bundle of $\Iit(\what{X})$ is ample.
\end{claim}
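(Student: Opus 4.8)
The plan is to combine the structure of an Abelian group scheme with the canonical bundle formula. By Claim~\ref{claim:6-40} the map $\tau = \varphi^{-1}$ is an isomorphism $\what{X} \xrightarrow{\sim} A$, so I identify $\what{X}$ with $A$; under this identification the Iitaka fibration $\iit(\what{X})\colon \what{X} → \Iit(\what{X})$ is the structure morphism $\alpha\colon A → S$ of the Abelian group scheme of Claim~\ref{claim:glfg2}, where $S := \Iit(\what{X})$ is smooth and projective by Claim~\ref{claim:6-39} and where $\alpha$ carries the zero section $\sigma$. Its relative dimension $g := n - \dim S$ is strictly positive, since $\dim S = κ(X) < n$ by Claim~\ref{claim:ngen}; in particular the fibres of $\alpha$ are positive-dimensional Abelian varieties.

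First I recall the description of the canonical bundle of an Abelian scheme. As $\alpha$ is smooth, has Abelian-variety fibres, and a section, translation invariance of relative differentials gives $Ω¹_{A/S} ≅ \alpha^*\sW$, where $\sW := \sigma^*Ω¹_{A/S} = \alpha_* Ω¹_{A/S}$ is the rank-$g$ Hodge bundle on $S$; this sheaf is nef by the Fujita--Kawamata semipositivity theorem for families of Abelian varieties. Setting $\lambda := \det \sW ∈ \Pic(S)$, passage to determinants in the relative cotangent sequence gives
\[
  K_{\what{X}} = K_A ≅ Ω^g_{A/S} ⊗ \alpha^* K_S ≅ \alpha^*(\lambda + K_S).
\]
By Claim~\ref{claim:AbdX} the divisor $K_{\what{X}}$ is semiample and the associated Iitaka fibration is $\alpha$; hence $m K_{\what{X}} = \alpha^* D$ for some $m ∈ ℕ^+$ and some ample Cartier divisor $D$ on $S$. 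Since $\alpha$ admits a section, $\alpha^*$ is injective on numerical equivalence classes of divisors, so comparison with the displayed formula shows that $\lambda + K_S$ is $ℚ$-ample on $S$.

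It remains to promote ``$\lambda + K_S$ ample'' to ``$K_S$ ample''; equivalently, to show that the Hodge line bundle $\lambda$ is numerically trivial, i.e.\ that $\alpha$ is isotrivial. This is precisely the content of \cite[Proof of Lemma~5.1]{MR3030068}, and, now that $\what{X}$ is smooth, that argument applies without change: one uses that $Ω¹_{\what{X}}$ inherits projective flatness from $Ω¹_{X_{\reg}}$ and $H$-semistability from $Ω^{[1]}_X$, restricts the relative cotangent sequence $0 → \alpha^*Ω¹_S → Ω¹_{\what{X}} → \alpha^*\sW → 0$ to general complete-intersection curves, and invokes the Chern-class equality to force the sub- and quotient bundles to have matching slopes against every polarisation, so that together with nefness of $\lambda$ one obtains $\lambda ≡ 0$. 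Then $K_S ≡ \lambda + K_S$ is $ℚ$-ample, and being a Cartier divisor on the smooth projective variety $S$, it is ample, which proves Claim~\ref{claim:6-41}. The formal pieces (abundance, the Iitaka fibration being a morphism, the canonical bundle formula) are routine; the genuine obstacle is this last step, where one must extract isotriviality of $\alpha$ from the interplay of projective flatness, semistability and the extremal Chern class, and in particular rule out that positivity of the Hodge bundle keeps $\lambda + K_S$ ample while $K_S$ itself is not --- this is exactly the delicate slope computation carried out in \emph{loc.\ cit.}
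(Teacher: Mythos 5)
The first half of your argument is fine: identifying $\iit(\what{X})$ with the Abelian group scheme $\alpha\colon A \to S$, using $\Omega^1_{A/S} \cong \alpha^*\sW$ and the section to get $K_{\what{X}} = \alpha^*(K_S + \lambda)$ with $\lambda = \det\sW$ nef, and comparing with $K^{[m]}_{\what{X}} = \iit(\what{X})^*(\text{ample})$ to conclude that $K_S + \lambda$ is ample. The gap is in the second half. You reduce the claim to numerical triviality of $\lambda$, i.e.\ to isotriviality of $\alpha$, and outsource that to \cite[Proof of Lem.~5.1]{MR3030068}. That reference does not prove isotriviality; what its proof contains --- and what the present claim reproduces --- is precisely the ampleness statement you are trying to establish. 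Isotriviality is the content of the \emph{final} step of Jahnke--Radloff, \cite[Thm.~6.1]{MR3030068}, which both in \emph{loc.~cit.} and in this paper is invoked only \emph{after} Claim~\ref{claim:6-41}, with ampleness of $K_{\Iit(\what{X})}$ as an input; so as written your argument is circular. Nor does the sketched slope computation close the gap: semistability of $\Omega^1_{\what{X}}$ (even with respect to every polarisation, which follows from the vanishing of the $\bQ$-Chern classes of $\sEnd\bigl(\Omega^{[1]}_X\bigr)$) applied to the subsheaf $\alpha^*\Omega^1_S$ and the quotient $\alpha^*\sW$ of the relative cotangent sequence gives only the one-sided inequalities $\mu(\alpha^*\Omega^1_S) \le \mu(\Omega^1_{\what{X}}) \le \mu(\alpha^*\sW)$; these rearrange to a \emph{lower} bound for $\lambda$ against $K_S$ and are perfectly compatible with $\lambda$ nef and non-trivial, so nothing forces ``matching slopes''. (Also, ``$K_S$ ample'' and ``$\lambda \equiv 0$'' are not equivalent; only one implication is relevant, and it is the one you cannot yet prove.)

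For comparison, the paper's proof never touches the Hodge bundle: $\Iit(\what{X})$ is of general type; the section $\sigma$ together with minimality and projective flatness of $\Omega^1_{\what{X}}$ makes $K_{\Iit(\what{X})}$ nef, hence big, nef and semiample; and if it were not ample, the positive-dimensional fibres of the associated birational contraction would be covered by rational curves, which via $\sigma$ would produce rational curves in $\what{X}$, contradicting Item~\ref{il:6-5-1} of Lemma~\ref{lem:6-5}. So the engine is the absence of rational curves, not isotriviality. Your intermediate result, that $K_S + \lambda$ is ample with $\lambda$ nef, is correct but strictly weaker than the claim until $\lambda$ is controlled --- and controlling $\lambda$ is exactly what is not available at this stage of the argument.
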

\begin{proof}[Proof of Claim~\ref{claim:6-41}]
  The smooth variety $\Iit(\what{X})$ is of general type, while the existence of
  $σ$ and minimality of $X$ and hence $\what X$ together imply that its
  canonical bundle is nef, hence abundant.  Fibres of the Iitaka fibration for
  $\Iit(\what{X})$ are covered by rational curves, which do not exist by the
  existence of $σ$ and the fact that $\what X$ (as an étale cover of $X$) does
  not contain rational curves by Item~\ref{il:6-5-1} of Lemma~\ref{lem:6-5}.
\end{proof}

Now that we know that $\what{X}$ is an Abelian group scheme over a smooth
projective base with ample canonical bundle, the final step of the argument of
Jahnke-Radloff, \cite[Thm.~6.1]{MR3030068}, applies to conclude also our proof.
\qed~\mbox{(Theorem~\ref{thm:charQAbelian})}

%
% Do not edit the following line.  The text is automatically updated by
% subversion.
%
\svnid{$Id: 06-extra.tex 651 2020-10-13 07:33:36Z kebekus $}

\section{Proof of Theorem~\ref*{thm:6-1}}
\subversionInfo

\subsection*{Step 1: Preparations}
\approvals{Daniel & yes \\Stefan & yes\\ Thomas & yes}

In the setup of Theorem~\ref{thm:6-1}, we set
\[
  ℱ := \bigl(\Sym^nΩ¹_X ⊗ 𝒪_X(-K_X)\bigr)^{**}.
\]
Let $H$ be a very ample divisor on $X$ and let
$(D_1, …, D_{n-1}) ∈ |H|^{⨯ (n-1)}$ be a general tuple of divisors, with
associated general complete intersection curve $C := D_1 ∩ ⋯ ∩ D_{n-1}$.  The
curve $C$ is then smooth and entirely contained in $X_{\reg}$.  The restriction
$ℱ|_C$ is locally free and nef, with $c_1(ℱ|_C) = 0$.  It follows that $ℱ|_C$ is
semistable and therefore that $ℱ$ is semistable with respect to $H$.  But then,
$Ω^{[1]}_X$ will likewise be semistable with respect to $H$.

\subsection*{Step 2: Local freeness of $ℱ$}
\approvals{Daniel & yes \\Stefan & yes\\ Thomas & yes}

Next, choose a resolution of singularities $π: \wtilde{X} → X$ such that the
quotient sheaf
\[
  \wtilde{ℱ} := \factor{π^*(ℱ)}{\tor}
\]
is locally free; such a resolution exists by \cite[Thm.~3.5]{Rossi68}.  Recall
from Fact~\ref{fact:1} that since $ℱ$ is nef, then so are $π^*(ℱ)$, $\wtilde{ℱ}$
and $\det \wtilde{ℱ}$.  There is more that we can say.  Since the determinant of
$ℱ$ is trivial by construction, $\det ℱ ≅ 𝒪_X$, we may write
\[
  \det \wtilde{ℱ} = 𝒪_{\wtilde X}\left(\sum a_i·E_i\right) \quad \text{where
    $E_i$ are $π$-exceptional and }a_i ∈ ℤ.
\]
The divisor $\sum a_i·E_i$ is nef.  But then the Negativity Lemma in the form
\cite[Lem.~2.1]{LWY20} asserts that $a_i ≤ 0$ for all $i$, which of course means
that all $a_i = 0$.  It follows that $\det \wtilde{ℱ} = 𝒪_{\wtilde X}$ and thus
that $\wtilde{ℱ}$ is numerically flat.  This has two consequences.
\begin{itemize}
\item First, it follows from the descent theorem for vector bundles on
  resolutions of klt spaces, \cite[Thm.~1.2]{GKPT17}, that $ℱ$ is locally free,
  numerically flat, and that $\wtilde{ℱ} = π^*(ℱ)$.
    
\item Second, it follows from \cite[Prop.~3.7]{GKP20a} that $Ω¹_{X_{\reg}}$ is
  projectively flat.
\end{itemize}

\subsection*{Step 3: Singularities of $X$}
\approvals{Daniel & yes \\Stefan & yes\\ Thomas & yes}

If $\what{X} → X$ is any maximally quasi-étale cover, then local freeness of $ℱ$
immediately implies local freeness and nefness for the reflexive normalised
cotangent sheaf of $\what{X}$.  The covering space $\what{X}$ therefore
reproduces the assumptions of Theorem~\ref{thm:6-1}, which allows to assume
without loss of generality that $X$ is itself maximally quasi-étale.  Together
with projective flatness of $Ω¹_{X_{\reg}}$, this assumption allows to apply the
local description of projectively flat sheaves found in
\cite[Prop.~3.11]{GKP20a}: every singular point $x ∈ X^{(an)}$ admit a
neighbourhood $U$, open in the analytic topology, such that
\[
  Ω^{[1]}_U \simeq ℒ_U^{⊕ n}
\]
with some Weil-divisorial sheaf $ℒ_U$ on $U$.  It follows from
Proposition~\ref{prop:icq} that $X$ has at worst isolated singularities.

\subsection*{Step 4: End of proof in case where $X$ is higher-dimensional}
\approvals{Daniel & yes \\Stefan & yes\\ Thomas & yes}

If $n ≥ 3$, consider the general complete intersection surface
$S := D_1 ∩ ⋯ ∩ D_{n-2}$.  Using that $X$ has isolated singularities, we find
that $S$ is smooth and contained in $X_{\reg}$, so that $Ω¹_X|_S$ is locally
free, $H$-semistable and projectively flat.  But by \cite[Prop.~3.1.b on
p.~42]{Kob87} this implies
\[
  \frac{n-1}{2n}·c_1\bigl(Ω¹_X|_S\bigl)² = c_2\bigl(Ω¹_X|_S\bigl)
\]
and therefore
\[
  \frac{n-1}{2n}·\what{c}_1(X)² · [H]^{n-2} = \what{c}_2(X) · [H]^{n-2}.
\]
The claim thus follows from the semistability of $Ω¹_X$ and
Theorem~\ref{thm:charQAbelian}.

\subsection*{Step 5: End of proof in case where $X$ is a surface}
\approvals{Daniel & yes \\Stefan & yes\\ Thomas & yes}

It remains to consider the case where $\dim X = 2$, so that $X$ is a surface
with klt quotient singularities.  We have seen above that
\[
  \Sym^{[2]} Ω¹_X = ℱ ⊗ 𝒪_X(K_X),
\]
where $ℱ$ is a locally free numerically flat sheaf of rank three.  The second
$ℚ$-Chern class is therefore computed as
\begin{align*}
  \what c_2 \left( \Sym^{[2]} Ω^{[1]}_X \right) & = \what{c}_2 \bigl(ℱ ⊗ 𝒪_X(K_X) \bigr) = \what{c}_2 \bigl(𝒪_X(K_X)^{⊕ 3} \bigr) = 3·[K_X]².  \\
  \intertext{On the other hand, the standard formula for the second Chern class of a symmetric product gives}
  \what c_2 \left( \Sym^{[2]} Ω^{[1]}_X \right) & = 2·[K_X]² + 4·\what{c}_2(X).
\end{align*}
Comparing these two equations, we find $\frac{1}{4}·[K_X]² = \what c_2(X)$.
Thanks to the semistability of $Ω^{[1]}_X$, we may again apply
Theorem~\ref{thm:charQAbelian} and end the proof.  \qed

\medskip

\end{document}